\renewcommand*\l@section{\@dottedtocline{1}{1.5em}{2.3em}}
\theoremstyle{plain}
\newtheorem{theorem}{Theorem}
\newtheorem{proposition}[theorem]{Proposition}
\newtheorem{lemma}[theorem]{Lemma}
\newtheorem{corollary}[theorem]{Corollary}
\newtheorem{example}[theorem]{Example}
\newtheorem{definition}[theorem]{Definition}
\theoremstyle{remark}
\newtheorem{remark}[theorem]{Remark}
\numberwithin{theorem}{section}
\begin{document}

\title{Regular Maps on Cartesian Products and Disjoint Unions of Manifolds}

\author{Shiquan Ren}
			
\begin{center}
{\Large {\textbf{{Regular Maps on Cartesian Products and Disjoint Unions of Manifolds}}}}

\vspace{0.5cm}

{\large Shiquan Ren}

\end{center}

\begin{quote}

\begin{abstract}
\medskip

A map from a manifold to a Euclidean space is said to be $k$-regular if the image of any distinct $k$ points are linearly independent. For $k$-regular maps on manifolds, lower bounds of the dimension of the ambient Euclidean space have been extensively studied. 
In this paper, we study the lower bounds of the dimension of the ambient Euclidean space for $2$-regular maps on Cartesian products of manifolds. As corollaries, we obtain the exact lower bounds of the dimension of the ambient Euclidean space for $2$-regular maps and $3$-regular maps on spheres as well as on some real projective spaces. Moreover, generalizing the notion of $k$-regular maps, we study  the lower bounds of the dimension of the ambient Euclidean space for maps with certain non-degeneracy conditions from disjoint unions of manifolds into Euclidean spaces.  
\end{abstract}

\end{quote}
\vspace{0.2cm}

\noindent{\bf{AMS Mathematical Classifications 2010}}.  	 Primary 55R40; Secondary 55T10,  55R80, 53C40

\noindent{\bf{Keywords}}. $k$-regular maps, configuration spaces, Grassmannians, characteristic classes


\section{Introduction}

Let $M$ be a smooth manifold and let $\mathbb{F}$ denote the real numbers  $\mathbb{R}$ or the complex numbers $\mathbb{C}$. 
For any $k\geq  2$, a map $f: M\longrightarrow \mathbb{F}^N$ is called (real or complex) $k$-regular 
if for any distinct $k$ points $x_1,\cdots, x_k$ in $M$,  $f(x_1), \cdots, f(x_k)$ are linearly independent in $\mathbb{F}^N$. For simplicity, a real $k$-regular map  is also called a $k$-regular map.  Any  (real or complex) $(k+1)$-regular map is (real or complex)  $k$-regular, and any $k$-regular map is injective.

Throughout this paper, all maps and functions are assumed to be continuous.  We use $S^m$ to denote  the $m$-sphere and use $\mathbb{R}P^m$, $\mathbb{C}P^m$ and $\mathbb{H}P^m$ to denote the real, complex and quaternionic projective spaces consisting of real lines through the origin in $\mathbb{R}^{m+1}$, complex lines through the origin in  $\mathbb{C}^{m+1}$  and  quaternionic lines through the origin in $\mathbb{H}^{m+1}$ respectively.  Moreover, we assume $m\geq 2$. 
\smallskip

In 1957, The study of $k$-regular maps was initiated by K. Borsuk \cite{Borsuk}. 
Later, the problem attracted additional attention because of its connection with the theory of Ceby${\check{\text{s}}}$ev approximation  
 (cf. \cite{handel4} and \cite[pp. 237-242]{singer}):
\begin{quote}
{\sc {Theorem}  [Haar-Kolmogorov-Rubinstein]. }Suppose $M$ is compact and $f_1,\cdots,f_n$ are linearly independent real-valued  functions on $M$. Let $F$ be the linear space spanned by $f_1,\cdots,f_n$ over $\mathbb{R}$. Then $(f_1,\cdots,f_n)$ is a $k$-regular map from $M$ to $\mathbb{R}^n$ if and only if for any  real-valued  function $g$ on $M$, the dimension of the set
$
\{f\in F\mid \sup_{x\in M}|g(x)-f(x)|=m\}
$
is smaller than or equal to $n-k$, where $m$ is the infimum of $\sup_{x\in M} |g(x)-f(x)|$ for all $f$ in $F$. 
\end{quote}

From 1970's to nowadays, $k$-regular maps on manifolds have been extensively studied. 
In 1978, some  $k$-regular maps on  the plane were constructed by F.R. Cohen and D. Handel  \cite{cohen1}:
\begin{quote}
{\sc \cite[Example~1.2]{cohen1}. }The map from $\mathbb{C}$ to $\mathbb{R}^{2k-1}$ sending $z$ to $(1,$ $z,$ $z^2,$ $\cdots,$ $z^{k-1})$ is $k$-regular.
\end{quote}
And in 2016, some $3$-regular maps on spheres were constructed by P. Blagojevi$\check{\text{c}}$,   W. L{\"u}ck and G. Ziegler \cite{high1}:
\begin{quote}
{\sc \cite[Example~2.6-(2)]{high1}. }Let $i$ be the standard embedding from $S^m$  to $\mathbb{R}^{m+1}$  and $1$ the constant map with image $1$. Then the map $(1,i)$ from $S^m$  to $\mathbb{R}^{m+2}$ is  $3$-regular. 
\end{quote}    
On the other hand,  generalizing the results of M.E. Chisholm \cite{chi} in 1979 and F.R. Cohen and D. Handel \cite{cohen1} in 1978, the lower bounds of $N$ for $k$-regular maps of Euclidean spaces into $\mathbb{R}^N$ were studied by P. Blagojevi$\check{\text{c}}$,   W. L{\"u}ck and G. Ziegler \cite{high1} in 2016:
\begin{quote}
{\sc \cite[Theorem 2.1]{high1}. }Let $\alpha(k)$ denote the number of ones in the dyadic expansion of $k$.  If there exists a $k$-regular map of $\mathbb{R}^m$ into $\mathbb{R}^{N}$, then $N\geq m(k-\alpha(k))+\alpha(k)$.
\label{pavle2014}
\end{quote}
While  the lower bounds of $N$ for complex $k$-regular maps of Euclidean spaces into $\mathbb{C}^N$ were studied by  P. Blagojevi$\check{\text{c}}$,  F.R. Cohen, W. L{\"u}ck and G. Ziegler  \cite{high2} in 2015:
\begin{quote}
{\sc \cite[Theorem 5.2]{high2}. }Let $p$ be an odd prime. If there exists a complex $p$-regular map of $\mathbb{R}^m$ into $\mathbb{C}^{N}$, then $N\geq [\frac{m+1}{2}](p-1)+1$.

{\sc \cite[Theorem 5.3]{high2}. }Let $p$ be an odd prime and let $\alpha_p(k)$ be the sum of coefficients in the $p$-adic expansion of $k$.   If $m$ is a power of $p$ and there exists a complex $k$-regular map of $\mathbb{C}^m$ into $\mathbb{C}^N$, then $N\geq m(k-\alpha_p(k))+\alpha_p(k)$. 
\end{quote}

Compared with $k$-regular maps on Euclidean spaces, it is more difficult to study the dimension of the ambient Euclidean space for $k$-regular maps on general manifolds. In 1996, D. Handel \cite{1996}  considered the lower bounds of $N$ for $2k$-regular maps of  manifolds into $\mathbb{R}^N$. In 2011,  R. Karasev \cite{karasev} pointed out a gap of the proof \cite[p. 1611]{1996}.   
 
 \smallskip
 
 The family of $k$-regular maps on disjoint unions of manifolds is a particular family of $k$-regular maps on general manifolds. Problems concerning $k$-regular maps on disjoint unions of manifolds attracted attention since 1980's and was firstly considered  by D. Handel \cite{handel2}:
 \begin{quote} 
{\sc \cite[Theorem 2.4]{handel2}. }
\label{c2th2}
Let $M_1,\cdots,M_k$ be closed, connected manifolds of dimensions $n_1,\cdots,n_k$ respectively. Suppose for $1\leq i\leq k$, the $q_i$-th dual Stiefel-Whitney class of $M_i$ is non-zero. If there exists a $2k$-regular map of the disjoint union $\coprod _{i=1}^kM_i$ into $\mathbb{R}^N$, then $N\geq 2k+\sum_{i=1}^k(n_i+q_i)$.
\end{quote}
 \smallskip

 In this paper, our first aim is to give a lower bound of $N$ for $2$-regular maps on Cartesian products of spheres and real, complex and quaternionic projective spaces into $\mathbb{R}^N$.  We prove the next theorem.
 
 \begin{theorem}[Main Theorem I]  \label{0921-3}
Suppose there is a $2$-regular map
\begin{eqnarray*}
f: \prod_{i=1}^{k_1}S^{m_{1,i}}\times \prod_{j=1}^{k_2}\mathbb{R}P^{m_{2,j}} \times \prod_{t=1}^{k_3}\mathbb{C}P^{m_{3,t}}\times \prod_{l=1}^{k_4}\mathbb{H}P^{m_{4,l}}\longrightarrow \mathbb{R}^N.
\end{eqnarray*}
Then 
\begin{eqnarray*}
N&\geq& \sum_{i=1}^{k_1} m_{1,i}+\sum_{j=1}^{k_2}2^{[\log_2 m_{2,j}]+1}+\sum_{t=1}^{k_3}2^{[\log_2 m_{3,t}]+2} \\&&+\sum_{l=1}^{k_4}2^{[\log_2 m_{4,l}]+3}-k_2-2k_3-4k_4 +2.
\end{eqnarray*}
\end{theorem}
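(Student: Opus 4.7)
The plan is to convert the existence of a $2$-regular map into a characteristic-class obstruction on the unordered configuration space $\mathcal{C}_2(X) := F(X,2)/\mathbb{Z}_2$, and then carry out a height computation in $H^*(\mathcal{C}_2(X);\mathbb{Z}_2)$.

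Given a $2$-regular $f : X \to \mathbb{R}^N$, I would first construct the canonical rank-$2$ real vector bundle $E \to \mathcal{C}_2(X)$ with fiber $\mathrm{span}(f(x),f(y))$ at $\{x,y\}$; the $2$-regularity forces the span to have dimension exactly $2$. The sections $s_\pm(x,y) = f(x) \pm f(y)$ of the pullback of $E$ to $F(X,2)$ are nowhere zero by $2$-regularity and are respectively invariant and anti-invariant under the swap, so $E$ splits as $\underline{\mathbb{R}} \oplus \ell$, where $\ell$ is the real line bundle associated to the double cover $F(X,2) \to \mathcal{C}_2(X)$. Since $E$ is a subbundle of the trivial rank-$N$ bundle, the relation $\bar{w}(E) = (1+\omega)^{-1} = \sum_{i \ge 0}\omega^i$ (with $\omega := w_1(\ell)$) gives $\bar{w}_i(E) = \omega^i = 0$ for $i > N - 2$; equivalently $N \ge 2 + \mathrm{ht}(\omega)$, where $\mathrm{ht}(\omega)$ is the largest $i$ with $\omega^i \ne 0$.

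It therefore suffices to show $\mathrm{ht}(\omega) \ge \dim X + q(X)$, where $q(X) := \max\{q : \bar{w}_q(TX) \ne 0\}$. The Whitney product formula together with the classical $w(T\mathbb{F}P^m) = (1+y)^{m+1}$ (with $|y| = \dim_{\mathbb{R}} \mathbb{F}$) and $w(TS^m) = 1$ shows that $\dim X + q(X)$ equals the right-hand side of the theorem minus $2$. To prove the bound, I would use the inclusion of the projective tangent bundle $PTX$ into $\mathcal{C}_2(X)$, as the boundary of a tubular neighborhood of the diagonal in the symmetric product $SP^2(X)$; under this inclusion, $\omega$ pulls back to $\hat{x} := w_1(\tau)$, the first Stiefel--Whitney class of the tautological line bundle on $PTX$. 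Iterating the projectivization relation $\hat{x}^{\dim X} = \sum_{i \ge 1} \pi^*(w_i(TX)) \hat{x}^{\dim X - i}$ expresses $\hat{x}^{\dim X + q(X) - 1}$ as $\pi^*(\bar{w}_{q(X)}(TX)) \cdot \hat{x}^{\dim X - 1}$ modulo lower-order $\hat{x}$-terms, which is nonzero by Leray--Hirsch since $\bar{w}_{q(X)}(TX) \ne 0$. This yields $\mathrm{ht}(\omega) \ge \dim X + q(X) - 1$.

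The remaining $+1$ in the height bound would come from an ``antipodal-pair'' contribution outside a tubular neighborhood of the diagonal. For a sphere factor $S^m$, the map $[u] \mapsto \{u, -u\}$ from $\mathbb{R}P^m$ into $\mathcal{C}_2(S^m)$ pulls $\omega$ back to the generator of $H^1(\mathbb{R}P^m)$, which has height $m = \dim S^m + q(S^m)$, providing the required extra $+1$. For each projective factor $\mathbb{F}P^m$, I would use an analogous ``tautological-involution-pair'' test map derived from the $\mathbb{F}$-line-bundle structure. Combining these per-factor test maps via Kunneth into a single $\Psi : T \to \mathcal{C}_2(X)$, I would verify that $\Psi^*(\omega^{\dim X + q(X)}) \ne 0$. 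The main obstacle is precisely this coherent Kunneth assembly: the single-factor case matches Handel's sharp bound for a $2$-regular map on a connected manifold, and extending to Cartesian products requires controlling how the factor-wise Stiefel--Whitney classes interact and how the per-factor ``+1 upgrades'' combine through the Kunneth decomposition of $H^*(\mathcal{C}_2(X);\mathbb{Z}_2)$.
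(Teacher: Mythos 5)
Your overall framework coincides with the paper's: the existence of a $2$-regular map forces $N\geq 2+\mathrm{ht}(\omega)$ where $\omega=w_1(\xi_{X,2})$ (your $E\cong\underline{\mathbb{R}}\oplus\ell$ is exactly the splitting (\ref{0930-6}) and your $\bar w_i(E)=\omega^i$ vanishing argument is Lemma~\ref{cor1}), and the numerical target $\dim X+q(X)=\mathrm{RHS}-2$ is correctly identified via the Whitney product formula and the top nonzero dual Stiefel--Whitney classes of the factors (Lemmas~\ref{c2pr9}--\ref{c2pr20}, which need Lucas' theorem for the projective factors). The genuine gap is the inequality $\mathrm{ht}(\omega)\geq\dim X+q(X)$. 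The paper does not prove this; it invokes it as a known result for an arbitrary closed connected manifold (Lemma~\ref{lewu}, attributed to Handel and Wu, repackaged as Corollary~\ref{c2co3}) and applies it directly to the whole Cartesian product $X$, so no per-factor analysis of configuration spaces and no K\"unneth assembly of test maps is ever needed --- only the K\"unneth/Whitney computation of $\bar w(TX)$ on the manifold itself.

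Your attempted proof of that height bound falls short by exactly one: the restriction to the projectivized tangent bundle $PTX$ (the link of the diagonal) gives only $\mathrm{ht}(\omega)\geq\dim X+q(X)-1$, since the tautological class on $PTX$ has height $\dim X-1+q(X)$, and the missing $+1$ is precisely the global content of the Handel--Wu theorem that cannot be seen in a neighborhood of the diagonal. Your antipodal-pair map handles the sphere factors (where $F(S^m,2)/\Sigma_2\simeq\mathbb{R}P^m$ and $q=0$), but for the projective-space factors and, more seriously, for the product you have no construction, and you explicitly flag the ``coherent K\"unneth assembly'' as unresolved. As written the argument therefore proves the theorem only with the constant $+2$ weakened to $+1$. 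To close the gap you should either cite the Handel--Wu lemma for the closed connected manifold $X$ as the paper does, or supply a genuine proof of $w_1(\xi_{M,2})^{m+q}\neq 0$ for closed connected $M$ (which requires Wu's analysis of the reduced symmetric product, not just the tubular neighborhood of the diagonal).
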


The following corollaries follow from Theorem~\ref{0921-3}. 
\begin{corollary}
\label{rpm}
Let $2^i\leq m<2^{i+1}$, $i\geq 1$. 

 (a). If there exists a $2$-regular map of $\mathbb{R}P^m$ into $\mathbb{R}^N$, then $N\geq 2^{i+1}+1$. 

(b). If there exists a $2$-regular map of $\mathbb{C}P^m$ into $\mathbb{R}^N$, then $N\geq 2^{i+2}$. 

(c). If there exists a $2$-regular map of $\mathbb{H}P^m$ into $\mathbb{R}^N$, then $N\geq 2^{i+3}-2$. 
\end{corollary}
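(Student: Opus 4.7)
The plan is straightforward: each of the three cases is simply a direct specialization of Theorem~\ref{0921-3} to the situation in which the Cartesian product consists of a single factor. So the only work is to read off the correct values of the indices $k_1,k_2,k_3,k_4$ and the exponents $m_{\bullet,\bullet}$, substitute them into the inequality of Theorem~\ref{0921-3}, and use the hypothesis $2^i\le m<2^{i+1}$ to rewrite $[\log_2 m]=i$.

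Concretely, I would proceed as follows. For part (a), I take $k_1=k_3=k_4=0$, $k_2=1$, and $m_{2,1}=m$ in Theorem~\ref{0921-3}. The right-hand side of the inequality collapses to $2^{[\log_2 m]+1}-1+2=2^{i+1}+1$, giving the asserted bound. For part (b), I take $k_1=k_2=k_4=0$, $k_3=1$, and $m_{3,1}=m$; the right-hand side becomes $2^{[\log_2 m]+2}-2+2=2^{i+2}$. For part (c), I take $k_1=k_2=k_3=0$, $k_4=1$, and $m_{4,1}=m$; the right-hand side becomes $2^{[\log_2 m]+3}-4+2=2^{i+3}-2$.

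There is essentially no obstacle here, since the corollary is a direct numerical consequence of Theorem~\ref{0921-3}; the only point to double-check is that the arithmetic in each of the three substitutions is carried out correctly and that the constraint $i\geq 1$ (ensuring $m\geq 2$, compatible with the running assumption $m\geq 2$ in the introduction) is respected. All the real work — constructing the appropriate configuration-space and Grassmannian cohomology arguments, together with the characteristic-class computations that make the bound of Theorem~\ref{0921-3} possible — has already been carried out in the proof of that theorem, and is simply being harvested here.
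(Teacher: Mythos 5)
Your proposal is correct and matches the paper exactly: the paper states that Corollary~\ref{rpm} ``is obtained from Theorem~\ref{0921-3} immediately,'' i.e.\ by specializing to a single factor, and your three substitutions and the resulting arithmetic ($2^{i+1}-1+2$, $2^{i+2}-2+2$, $2^{i+3}-4+2$) are all accurate.
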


\begin{corollary}\label{sphere}
The following are equivalent

(a).  there exists a $3$-regular map of $S^m$ into $\mathbb{R}^N$,

(b). there exists a $2$-regular map of $S^m$ into $\mathbb{R}^N$,

(c). $N\geq m+2$. 
\end{corollary}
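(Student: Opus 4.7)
The plan is to establish the cyclic implications $(a) \Rightarrow (b) \Rightarrow (c) \Rightarrow (a)$ by combining the two inputs already available in the excerpt, namely the $3$-regular embedding of Blagojevi\v{c}--L\"uck--Ziegler and Main Theorem~\ref{0921-3}.

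\medskip

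\noindent\textbf{First step: (a) $\Rightarrow$ (b).} This is immediate from the introductory remark that any $(k+1)$-regular map is automatically $k$-regular. No work is needed here.

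\medskip

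\noindent\textbf{Second step: (b) $\Rightarrow$ (c).} This is the quantitative heart of the corollary, and it is where Theorem~\ref{0921-3} does the work. I specialize Main Theorem~I to the case $k_1 = 1$, $k_2 = k_3 = k_4 = 0$, and $m_{1,1} = m$; the product collapses to the single factor $S^m$, and the right-hand side of the bound collapses to $m + 2$. Hence existence of a $2$-regular map $S^m \to \mathbb{R}^N$ forces $N \geq m + 2$, which is exactly~(c).

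\medskip

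\noindent\textbf{Third step: (c) $\Rightarrow$ (a).} For this direction I invoke the explicit construction \cite[Example~2.6-(2)]{high1} recalled in the introduction: the map $(1,i) : S^m \to \mathbb{R}^{m+2}$, where $i$ is the standard embedding of $S^m$ into $\mathbb{R}^{m+1}$ and $1$ denotes the constant function, is $3$-regular. When $N \geq m+2$, I postcompose this $3$-regular map with the linear inclusion $\mathbb{R}^{m+2} \hookrightarrow \mathbb{R}^N$ obtained by padding with zeros. Linear injections preserve linear independence of any finite set of vectors, so the resulting map $S^m \to \mathbb{R}^N$ is again $3$-regular. This establishes~(a).

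\medskip

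\noindent\textbf{Main obstacle.} The only non-formal step is (b) $\Rightarrow$ (c), and it is non-formal only because it rests on Theorem~\ref{0921-3}; within the framework of this corollary it is a direct numerical specialization. The mild subtlety worth double-checking is that applying Theorem~\ref{0921-3} with a single-sphere product (i.e.\ $k_1 = 1$ and all other $k_j = 0$) is indeed within the stated hypotheses of the theorem and that the additive constant $+2$ survives the specialization. Once this bookkeeping is verified, the three implications close up to give the equivalence.
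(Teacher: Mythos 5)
Your proposal is correct and follows exactly the paper's own route: the paper states that Corollary~\ref{sphere} is obtained from \cite[Example~2.6-(2)]{high1} (giving the $3$-regular map $S^m\to\mathbb{R}^{m+2}$ for (c) $\Rightarrow$ (a)) together with Theorem~\ref{0921-3} specialized to $k_1=1$, $k_2=k_3=k_4=0$ (giving $N\geq m+2$ for (b) $\Rightarrow$ (c)), with (a) $\Rightarrow$ (b) trivial. Your bookkeeping of the specialization is accurate.
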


\begin{remark}
Corollary~\ref{sphere} gives a  counter-example of \cite[Theorem~4.4]{karasev}.
\end{remark}

\begin{corollary}\label{exactrpm}
Let $m=2^i+1$, $i\geq 1$. Then the following are equivalent

(a). there exists a $3$-regular map of $\mathbb{R}P^m$ into $\mathbb{R}^N$,

(b). there exists a $2$-regular map of $\mathbb{R}P^m$ into $\mathbb{R}^N$,

(c). $N\geq 2m-1$.
\end{corollary}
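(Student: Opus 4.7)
The three-way equivalence decomposes into (a)$\Rightarrow$(b)$\Rightarrow$(c)$\Rightarrow$(a). The first implication is a tautology, since any $3$-regular map is $2$-regular by definition. The second implication is an immediate specialization of Corollary~\ref{rpm}(a): when $m=2^i+1$ with $i\geq 1$ we have $2^i\leq m<2^{i+1}$, so the bound there reads $N\geq 2^{i+1}+1=2(2^i+1)-1=2m-1$. For (c)$\Rightarrow$(a), it is enough to exhibit a single $3$-regular map $f:\mathbb{R}P^m\to\mathbb{R}^{2m-1}$; composing with the standard inclusion $\mathbb{R}^{2m-1}\hookrightarrow\mathbb{R}^N$ then covers every $N\geq 2m-1$.

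The approach I would take for the construction is a Veronese-plus-projection strategy. The symmetric tensor map
$$\sigma:\mathbb{R}P^m\longrightarrow\operatorname{Sym}^2(\mathbb{R}^{m+1}),\qquad [x]\mapsto xx^T/|x|^2,$$
is $3$-regular: given distinct $[v_1],[v_2],[v_3]$ and a relation $\sum_i a_i v_iv_i^T=0$, applying both sides to each $v_j$ in turn and distinguishing the cases ``$v_1,v_2,v_3$ coplanar'' versus ``$v_1,v_2,v_3$ linearly independent'' collapses the equations to $a_1=a_2=a_3=0$. I would then look for a linear projection $\pi:\operatorname{Sym}^2(\mathbb{R}^{m+1})\to\mathbb{R}^{2m-1}$ for which $\pi\circ\sigma$ remains $3$-regular, and take $f:=\pi\circ\sigma$.

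The main obstacle is establishing the existence of such a $\pi$ in the tight target dimension $2m-1$. A naive incidence-variety argument is insufficient: over each triple in $F(\mathbb{R}P^m,3)$ (of dimension $3m$), the ``bad'' kernels form a Schubert cycle of codimension only $2m-3$ in the Grassmannian of candidate projections, and $3m>2m-3$, so the bad locus could in principle exhaust the Grassmannian. The arithmetic assumption $m=2^i+1$ must therefore enter substantively. I would try to exploit the Cohen immersion $\mathbb{R}P^m\hookrightarrow \mathbb{R}^{2m-2}$, available precisely because $\alpha(m)=2$, and promote it to an embedding in which no three image points are collinear; augmenting by the constant coordinate $1$ in the spirit of the sphere construction $(1,i):S^m\to\mathbb{R}^{m+2}$ of \cite[Example~2.6-(2)]{high1} would then produce the desired $3$-regular map into $\mathbb{R}^{2m-1}$ and match the lower bound of Corollary~\ref{rpm}(a) exactly.
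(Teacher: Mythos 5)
Your implications (a)$\Rightarrow$(b) and (b)$\Rightarrow$(c) are exactly the paper's: the first is trivial, and the second is Corollary~\ref{rpm}(a) evaluated at $m=2^i+1$. The genuine gap is in (c)$\Rightarrow$(a): you never actually produce a $3$-regular map $\mathbb{R}P^m\to\mathbb{R}^{2m-1}$. Your first idea (the Veronese map followed by a linear projection) is abandoned by your own dimension count, and your fallback is only a plan: the $\alpha(m)=2$ result gives an \emph{immersion} of $\mathbb{R}P^m$ in $\mathbb{R}^{2m-2}$, not an embedding, and even granting an embedding, the extra property that no three image points are affinely collinear is a strong condition that general position cannot supply in dimension $2m-2$ (the locus of collinear triples has expected dimension $3m-(2m-3)=m+3>0$, the same kind of obstruction you already noted for the projection). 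So the upper-bound half of the equivalence is left unproved.

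The paper closes this half differently and more cheaply, via Example~\ref{upperrpm}: it takes an ordinary topological embedding $e:\mathbb{R}P^m\hookrightarrow\mathbb{R}^{2m-3}$, which exists for $m=2^i+1$ by the cited embedding theorems of Handel, Mahowald and Rees, includes $\mathbb{R}^{2m-3}$ into $S^{2m-3}$, and composes with the $3$-regular map $(1,i):S^{2m-3}\to\mathbb{R}^{2m-1}$ of \cite[Example~2.6-(2)]{high1}. Since a $3$-regular map precomposed with any injection is still $3$-regular, no collinearity condition on $e$ is needed; the price of adding two coordinates (rather than your one) is paid for by starting from an embedding into $\mathbb{R}^{2m-3}$ instead of $\mathbb{R}^{2m-2}$. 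Any repair of your argument would have to import such an embedding theorem from the literature anyway, at which point the paper's composition finishes the proof with no further work.
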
 
  
 \smallskip
 
Besides considering $k$-regular maps, we generalize the notion of $k$-regular maps on disjoint unions of manifolds  and   consider a weaker non-degeneracy condition for maps on disjoint unions of manifolds.  We give  the  following definition. 
\begin{definition}
\label{c2def1}
Let $M_1,$ $\cdots,$  $M_n$ be manifolds.  A map 
\begin{eqnarray*}\label{c2eq1}
f: \coprod_{i=1}^n M_i\longrightarrow \mathbb{F}^N 
\end{eqnarray*}
is called (real or complex) $(M_1,k_1;M_2,k_2;\cdots;M_n,k_n)$-regular 
if for any distinct points
$
x_{i,1},$ $x_{i,2},$ $\cdots,$ $x_{i,k_i}\in M_i,
$
$i=1,2,$ $\cdots,$ $n$, their images 
\begin{eqnarray*}
\coprod_{ i=1} ^ n\{ f(x_{i,1}), f(x_{i,2}), \cdots, f(x_{i,k_i})\}
\end{eqnarray*}
are linearly independent in $\mathbb{F}^N$. 
In particular, a real $(M_1,k_1;$ $M_2,k_2;$ $\cdots;$ $M_n,k_n)$-regular map  is called $(M_1,k_1;$ $M_2,k_2;$ $\cdots;$ $M_n,k_n)$-regular for short.
\end{definition}

 With the help of Definition~\ref{c2def1}, we re-obtain \cite[Theorem 2.4]{handel2} in Remark~\ref{1007-r2}.  
 
 \smallskip

The second aim of this paper is to give a lower bound  of $N$ for the regular maps  defined in Definition~\ref{c2def1}  from disjoint unions of planes, spheres and real, complex and quaternionic projective spaces into $\mathbb{R}^N$. We prove the next theorem.  

\begin{theorem}[Main Theorem II] \label{c2th1}
Suppose we have a $(\mathbb{R}^2,2^{d_1};$ $\cdots;$ $\mathbb{R}^2,2^{d_{k_0}};$ $S^{m_{1,1}},2;$ $\cdots;$ $S^{m_{1,k_1}},2;$ $ \mathbb{R}P^{m_{2,1}},2;$ $\cdots;$  $\mathbb{R}P^{m_{2,k_2}},2;$ 
$\mathbb{C}P^{m_{3,1}},2;$ $\cdots;$ $\mathbb{C}P^{m_{3,k_3}},2;$ $\mathbb{H}P^{m_{4,1}},2;$ $\cdots;$ $\mathbb{H}P^{m_{4,k_4}},2)$-regular map
\begin{eqnarray*}
f: (\coprod_{k_0} \mathbb{R}^2)\coprod (\coprod_{i=1}^{k_1} S^{m_{1,i}})\coprod (\coprod_{j=1}^{k_2}\mathbb{R}P^{m_{2,j}})\coprod(\coprod_{t=1}^{k_3}\mathbb{C}P^{m_{3,t}})\coprod(\coprod_{l=1}^{k_4} \mathbb{H}P^{m_{4,l}})\longrightarrow \mathbb{R}^N.
\end{eqnarray*}   
Then
\begin{eqnarray*} 
N&\geq& \sum_{s=1}^{k_0} 2^{d_s+1}+\sum_{i=1}^{k_1}m_{1,i}+\sum_{j=1}^{k_2}2^{[\log_2m_{2,j}]+1}\nonumber\\
&&+\sum_{t=1}^{k_3} 2^{[\log_2 m_{3,t}]+2}
+\sum_{l=1}^{k_4}2^{[\log_2 m_{4,l}]+3}-k_0+ 2k_1+k_2-2k_4.
\end{eqnarray*}
\end{theorem}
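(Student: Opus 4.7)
The plan is to extend the Stiefel--Whitney class obstruction underlying the proof of Main Theorem~\ref{0921-3} to the disjoint-union setting: the single configuration space of that proof is replaced by a product of configuration spaces, one per component, and factor-wise non-vanishing Stiefel--Whitney classes are combined into a single non-vanishing class via the K\"unneth formula. Set
\[
K=\sum_{s=1}^{k_0}2^{d_s}+2(k_1+k_2+k_3+k_4),
\]
the total number of points that the regularity hypothesis forces to be linearly independent. For each manifold $M$ appearing in the disjoint union with associated regularity index $k$, I would first send a configuration to the span of its images,
\[
F(M,k)\longrightarrow G_k(\mathbb{R}^N),\qquad (x_1,\dots,x_k)\longmapsto \mathrm{span}\bigl(f(x_1),\dots,f(x_k)\bigr),
\]
which is $\Sigma_k$-invariant and descends to $\bar f_M:F(M,k)/\Sigma_k\to G_k(\mathbb{R}^N)$; pulling back the tautological bundle produces a rank-$k$ subbundle $\xi_M\hookrightarrow \epsilon^N$ on $F(M,k)/\Sigma_k$.

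Next I would form the product $X=\prod_M F(M,k)/\Sigma_k$ with projections $\pi_M$. The joint regularity in Definition~\ref{c2def1} is precisely the statement that the $K$ image vectors above any point of $X$ are simultaneously linearly independent, so $\bigoplus_M \pi_M^*\xi_M$ embeds as a rank-$K$ subbundle of $\epsilon^N$. Its rank-$(N-K)$ complement $\nu$ satisfies, with $\mathbb{Z}/2$-coefficients,
$w(\nu)=\prod_M \pi_M^*\bar w(\xi_M)$, and $w_j(\nu)=0$ for $j>N-K$. This gives the key implication: if for every factor $M$ an integer $a_M$ can be chosen with $\bar w_{a_M}(\xi_M)\neq 0$ in $H^*(F(M,k)/\Sigma_k;\mathbb{Z}/2)$, then the K\"unneth formula yields
\[
0\;\neq\;\prod_M \pi_M^*\bar w_{a_M}(\xi_M)\;\in\;H^{\sum_M a_M}(X;\mathbb{Z}/2),
\]
and consequently $N\geq K+\sum_M a_M$.

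It remains to read off the optimal $a_M$ for each family. For $M=\mathbb{R}^2$ with $k=2^{d_s}$, the Stiefel--Whitney computation underlying \cite[Theorem~2.1]{high1} supplies $a=2^{d_s}-1$, so $k+a=2^{d_s+1}-1$. For the four remaining families, all with $k=2$, the non-vanishing classes established in the course of proving Theorem~\ref{0921-3} yield $a_M=m_{1,i}$, $\,2^{[\log_2 m_{2,j}]+1}-1$, $\,2^{[\log_2 m_{3,t}]+2}-2$, and $\,2^{[\log_2 m_{4,l}]+3}-4$ on the factors $S^{m_{1,i}}$, $\mathbb{R}P^{m_{2,j}}$, $\mathbb{C}P^{m_{3,t}}$, $\mathbb{H}P^{m_{4,l}}$ respectively; these are exactly the values recorded by the sharpness statements of Corollaries~\ref{sphere} and~\ref{rpm}. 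Summing $k+a$ over all factors reproduces precisely the bound in the theorem.

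The hard part will be twofold. First, the direct-sum splitting $\bigoplus_M \pi_M^*\xi_M\hookrightarrow \epsilon^N$ must be extracted from Definition~\ref{c2def1} with some care: one has to notice that distinctness is imposed only within each component, so the configurations on distinct components vary independently and their spans are mutually transverse, justifying the internal direct sum. Second, and more substantively, one must verify that each external bound invoked above is actually proved by exhibiting a non-zero Stiefel--Whitney class $\bar w_{a_M}(\xi_M)$ on $F(M,k)/\Sigma_k$, rather than in a form (such as an equivariant obstruction or an Euler class) that would not combine cleanly via K\"unneth. This compatibility is exactly what the proofs in \cite{high1} and of Theorem~\ref{0921-3} provide, but it is the step where a misidentified characteristic class would break the whole argument.
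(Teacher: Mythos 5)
Your proposal is correct and follows essentially the same route as the paper: it splits the canonical bundle over the product of unordered configuration spaces as $\bigoplus_i \pi_i^*\xi_{M_i,k_i}$, combines the factor-wise top nonzero dual Stiefel--Whitney classes via K\"unneth/Whitney product to get the obstruction $N\geq\sum_i(\lambda_i+k_i)$ (the paper derives this from the Grassmannian relations, and explicitly notes your complement-bundle formulation as an equivalent alternative), and then substitutes the same per-factor values $2^{d_s}-1$, $m_{1,i}$, $2^{[\log_2 m_{2,j}]+1}-1$, $2^{[\log_2 m_{3,t}]+2}-2$, $2^{[\log_2 m_{4,l}]+3}-4$. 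Your arithmetic reproduces the stated bound exactly.
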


The next corollary follows from Theorem~\ref{c2th1}. 
\begin{corollary}\label{c2th3}
The following are equivalent:

(a). there exists a $(\mathbb{R}^2, 2^{d_1};\cdots; \mathbb{R}^2, 2^{d_{k_0}};$ $S^{m_1},2;\cdots; S^{m_{k_1}},2;$ $\mathbb{R}P^{2^{u_1}+1},2;$ $\cdots;$ $\mathbb{R}P^{2^{u_{k_2}}+1},2)$-regular map
\begin{eqnarray*}
f: (\coprod_{k_0}\mathbb{R}^2)\coprod(\coprod_{i=1}^{k_1} S^{m_i})\coprod(\coprod _{j=1}^{k_2}\mathbb{R}P^{2^{u_j}+1})\longrightarrow \mathbb{R}^N;
\end{eqnarray*}
 
 (b). there exists a $(\mathbb{R}^2, 2^{d_1};\cdots; \mathbb{R}^2, 2^{d_{k_0}};$ $S^{m_1},3;\cdots; S^{m_{k_1}},3;$ $\mathbb{R}P^{2^{u_1}+1},3;$ $\cdots;$ $\mathbb{R}P^{2^{u_{k_2}}+1},3)$-regular map
\begin{eqnarray*}
f: (\coprod_{k_0}\mathbb{R}^2)\coprod(\coprod_{i=1}^{k_1} S^{m_i})\coprod(\coprod _{j=1}^{k_2}\mathbb{R}P^{2^{u_j}+1})\longrightarrow \mathbb{R}^N;
\end{eqnarray*}

(c). $$
N\geq \sum_{s=1}^{k_0} 2^{d_s+1}+\sum_{i=1}^{k_1} m_i+ \sum_{j=1}^{k_2} 2^{u_j+1}-k_0+2k_1+k_2.
$$
\end{corollary}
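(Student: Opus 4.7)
My plan is to prove Corollary~\ref{c2th3} via the chain (b) $\Rightarrow$ (a) $\Rightarrow$ (c) $\Rightarrow$ (b). For (b) $\Rightarrow$ (a), given any selection of $2$ distinct points on each $S^{m_i}$ and each $\mathbb{R}P^{2^{u_j}+1}$ (together with $2^{d_s}$ distinct points on each $\mathbb{R}^2$), one extends the selection by one additional point on each sphere and each real projective factor; the resulting enlarged selection has linearly independent images by (b), and restricting back to the original subset preserves linear independence, giving (a).

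For (a) $\Rightarrow$ (c), I simply apply Theorem~\ref{c2th1} with $k_3=k_4=0$ and $m_{2,j}=2^{u_j}+1$. Since $u_j\geq 1$, the inequality $2^{u_j}<2^{u_j}+1<2^{u_j+1}$ gives $[\log_2 m_{2,j}]=u_j$, so the term $2^{[\log_2 m_{2,j}]+1}$ in the bound of Theorem~\ref{c2th1} collapses to $2^{u_j+1}$, and the resulting inequality matches (c) exactly.

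The substantive step is (c) $\Rightarrow$ (b), for which I propose an explicit direct-sum construction. On each $\mathbb{R}^2\cong\mathbb{C}$, take Cohen-Handel's moment map $z\mapsto (1,z,\ldots,z^{2^{d_s}-1})$ into $\mathbb{R}^{2^{d_s+1}-1}$ from \cite[Example~1.2]{cohen1}, which is $2^{d_s}$-regular. On each sphere $S^{m_i}$, take the $3$-regular map $(1,i): S^{m_i}\to\mathbb{R}^{m_i+2}$ from \cite[Example~2.6-(2)]{high1}. On each $\mathbb{R}P^{2^{u_j}+1}$, invoke Corollary~\ref{exactrpm} to obtain a $3$-regular map into $\mathbb{R}^{2(2^{u_j}+1)-1}=\mathbb{R}^{2^{u_j+1}+1}$. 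Assemble these into a single map on the disjoint union by placing each component image into its own block of coordinates (zeros on the other blocks). The target has dimension
\begin{eqnarray*}
N_0=\sum_{s=1}^{k_0}(2^{d_s+1}-1)+\sum_{i=1}^{k_1}(m_i+2)+\sum_{j=1}^{k_2}(2^{u_j+1}+1),
\end{eqnarray*}
which equals the right-hand side of (c). Any non-trivial linear dependence among the combined images would restrict to a non-trivial dependence within at least one block, contradicting the regularity of the corresponding component map; hence the assembled map has the required joint regularity. If $N>N_0$, append $N-N_0$ zero coordinates.

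The sole non-trivial input is the existence of a $3$-regular map on $\mathbb{R}P^{2^{u_j}+1}$ into $\mathbb{R}^{2^{u_j+1}+1}$ at the sharp dimension; this is precisely what Corollary~\ref{exactrpm} supplies (itself a consequence of Theorem~\ref{0921-3}). Once that corollary is in hand, the rest of the argument is an elementary assembly, and no further topological obstruction needs to be overcome.
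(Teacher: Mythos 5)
Your proposal is correct and follows essentially the same route as the paper: the lower bound (a)$\Rightarrow$(c) is Theorem~\ref{c2th1}, and the construction for (c)$\Rightarrow$(b) is exactly the block-diagonal direct sum of Example~\ref{c2pr2} (Cohen--Handel on the planes, $(1,i)$ on the spheres, and the sharp $3$-regular maps on $\mathbb{R}P^{2^{u_j}+1}$ coming from Example~\ref{upperrpm}, which is also what underlies the existence direction of Corollary~\ref{exactrpm} that you cite). The dimension count $N_0$ matches the bound in (c), so no gap.
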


\smallskip

As by-products, we  give some lower bounds of $N$ for the complex regular maps   from Euclidean spaces, spheres and complex projective spaces into $\mathbb{C}^N$.  In the last section of this paper, we give a generalization of  \cite[Theorem~5.2]{high2}:
\begin{itemize}
\item
For an    odd prime $p$, if there exists a complex $np$-regular map from $ \mathbb{R}^m$ into  $\mathbb{C}^N$, then $N\geq n([\frac{m+1}{2}] (p-1)+1)$. 
\end{itemize}
Moreover, we prove the following:
\begin{itemize}
\item
If there exists a  complex $2$-regular map of $S^m$ into $\mathbb{C}^N$, then $N\geq [\frac{m}{2}] +2$.  
\item
For $m\geq 4$, if there exists a complex $2$-regular map of $\mathbb{C}P^m$ into $\mathbb{C}^N$, then $N\geq 2m$. 
\end{itemize}

\smallskip

The paper is organized as follows. In Section~\ref{subs2.1}, we give  some examples of $k$-regular maps as well as the regular maps on disjoint unions of manifolds defined in Definition~\ref{c2def1}. In Section~\ref{subs2.2}, we review the cohomology of Grassmannians.  In Section~\ref{subs2.4}, we firstly review the Stiefel-Whitney classes and Chern classes of the canonical vector bundle over configuration spaces. Then we prove some auxiliary lemmas. In Section~\ref{sec5}, we prove Theorem~\ref{0921-3}.  In Section~\ref{6.2},  we give an obstruction for the regular maps on disjoint unions of manifolds defined in Definition~\ref{c2def1}. In Section~\ref{6.3}, we prove Theorem~\ref{c2th1}. In Section~\ref{sec8}, we prove  the lower bounds of $N$ (listed above) for complex regular maps from Euclidean spaces, spheres and complex projective spaces into $\mathbb{C}^N$.

The main results of this paper are Theorem~\ref{0921-3} and Theorem~\ref{c2th1}.

\section{Examples of regular maps}\label{subs2.1}

With the help of \cite[Example~2.6-(2)]{high1}, if there exists an embedding of  $M$ into $\mathbb{R}^n$, then composed with an embedding of $\mathbb{R}^n$ into $S^n$ and a $3$-regular map of $S^n$ into $\mathbb{R}^{n+2}$, we obtain a $3$-regular map of $M$ into $\mathbb{R}^{n+2}$. Let $M$ be $\mathbb{R}P^m$. By applying \cite[Theorem~4.1]{handel5}, \cite[Theorem~5.2, Theorem~5.7]{mark} and \cite[Theorem~5]{rees}, we have the following example.

\begin{example}\label{upperrpm}
There exist $3$-regular maps of $\mathbb{R}P^m$ into $\mathbb{R}^N$ for the cases listed in Table~\ref{table:table6.1}.
\end{example}

\begin{table}[h]
\caption{$3$-regular maps of $\mathbb{R}P^m$ into $\mathbb{R}^N$ } 
\vspace{0.1cm}

\label{table:table6.1}

\centering 
\begin{tabular}{  | c | c | c |        } 
\hline 
 $m=8q+3$ or $8q+5$, $q>0$ & $N\geq 2m-\min\{5,\alpha(q)\}$\\
               $m=8q+1$, $q>0$           & $N\geq 2m-\min\{7, \alpha(q)\}+2$\\
               $m=32q+7$, $q>0$          & $N\geq 2m-6$\\
               $m=8q+7$, $q>1$           & $N\geq 2m-5$\\
               $m\equiv 3$ (mod $8$), $m\geq 19$    & $N\geq 2m-4$\\
               $m\equiv 1$ (mod $4$), $m\neq 2^i+1$ & $N\geq 2m-2$\\
               $m=4q+i$, $i=0$ or $2$,  $q\neq 2^j$ or $0$ & $N\geq 2m-1$\\
               $m=2^j+1$, $j\geq 2$                        &  $N\geq 2m-1$\\
               $m=2^j+2$, $j\geq 3$                        & $N\geq 2m$ \\
\hline 
\end{tabular}
\label{x} 
\end{table}

Let $f_i: M_i\longrightarrow \mathbb{F}^{N_i}$  be a (real or complex) $k_i$-regular map for $i=1,2,\cdots, n$.  Then we have a (real or complex) $(M_1,k_1;M_2,k_2;\cdots;M_n,k_n)$-regular map 
$$
f: \coprod_{i=1}^n M_i\longrightarrow  \prod _{i=0}^n \mathbb{F}^{N_i} \cong\mathbb{F}^{\sum_{i=1}^n N_i}
$$ 
given by $f(x)=(0,\cdots,0, f_i(x), 0,\cdots,0)$ for $x\in M_i$, $i=1,2,\cdots,n$. 
Consequently, with the helps of  \cite[Example~1.2]{cohen1}, \cite[Example~2.6-(2)]{high1} and Example~\ref{upperrpm},  we have the following example.
\begin{example}\label{c2pr2}
There exists a  $(\mathbb{R}^2,b_1;$ $\cdots;$ $\mathbb{R}^2,b_{k_0};$  $S^{m_{1}},3;$ $\cdots;$ $S^{m_{k_1}},3;$ $\mathbb{R}P^{2^{u_1}+1},3;$ $\cdots;$ $\mathbb{R}P^{2^{u_{k_2}}+1},3)$-regular map
\begin{eqnarray*}
f: (\coprod_{k_0} \mathbb{R}^2)\coprod (\coprod_{i=1}^{k_1}S^{m_i})\coprod(\coprod_{j=1}^{k_2}\mathbb{R}P^{2^{u_j}+1})\longrightarrow \mathbb{R}^N
\end{eqnarray*}
for any
$$
N\geq{2(\sum_{s=1}^{k_0}b_s)+\sum_{i=1}^{k_1} m_i + \sum_{j=1}^{k_2} 2^{u_j+1}-k_0+2k_1+k_2}.
$$
\end{example}

\section{Cohomology of Grassmannians}\label{subs2.2}

Let $A$ be a ring and $a\in A\setminus A^*$ where $A^*$ is the set of  invertible elements of $A$. The height of  $a$ is defined as the smallest positive integer $n$ such that $a^{n+1}=0$, or infinity, if such an $n$ does not exist (cf. \cite{high2}).

Given positive integers $k$ and $n$ with $n\geq k$, let $G_k(\mathbb{\mathbb{F}}^{n+1})$ be the (real or complex) Grassmannian consisting of $k$-dimensional subspaces of $\mathbb{F}^{n+1}$ and $G_k(\mathbb{\mathbb{F}}^{\infty})$ the direct limit of $G_k(\mathbb{\mathbb{F}}^{n+1})$.  The canonical inclusion of $\mathbb{F}^{n+1}$ into  $\mathbb{F}^\infty$ as the first $(n+1)$-coordinates induces an inclusion 
\begin{eqnarray}
\label{inclusion}
i: G_k(\mathbb{F}^{n+1})\longrightarrow G_k(\mathbb{F}^\infty).
\end{eqnarray}

\noindent{\sc{Case~1}: $\mathbb{F}=\mathbb{R}$. }

\noindent It is known that
$H^*(G_k(\mathbb{R}^\infty);\mathbb{Z}_2)=\mathbb{Z}_2[w_1,w_2,\cdots,w_k]$
where $w_i$ is the $i$-th universal Stiefel-Whitney class with $|w_i|=i$.
And
\begin{eqnarray}\label{0909-1}
H^*(G_k(\mathbb{R}^{n+1});\mathbb{Z}_2)=\mathbb{Z}_2[w_1,w_2,\cdots,w_k]/(\bar w_{n-k+2},\bar w_{n-k+3},\cdots,\bar w_{n+1})
\end{eqnarray}
where $\bar w_j$ is defined as the $j$-th degree term in the expansion of $(1+w_1+\cdots+w_k)^{-1}$ and $(\bar w_{n-k+2},\bar w_{n-k+3},\cdots,\bar w_{n+1})$ is the ideal generated by $\bar w_{n-k+2}$, $\bar w_{n-k+3}$, $\cdots$, $\bar w_{n+1}$. The inclusion (\ref{inclusion}) induces an epimorphism on mod 2 cohomology.

\smallskip

\noindent{\sc{Case~2}: $\mathbb{F}=\mathbb{C}$. }

\noindent
It is known that
$H^*(G_k(\mathbb{C}^\infty);\mathbb{Z})=\mathbb{Z}[c_1,c_2,\cdots,c_k]$
where $c_i$ is the $i$-th universal Chern class with $|c_i|=2i$. 
And
\begin{eqnarray}\label{0909-2}
H^*(G_k(\mathbb{C}^{n+1});\mathbb{Z})=\mathbb{Z}[c_1,c_2,\cdots,c_k]/(\bar c_{n-k+2},\bar c_{n-k+3},\cdots,\bar c_{n+1})
\end{eqnarray}
where $\bar c_j$ is defined as the $2j$-th degree term in the expansion of $(1+c_1+\cdots+c_k)^{-1}$ and $(\bar c_{n-k+2},\bar c_{n-k+3},\cdots,\bar c_{n+1})$ is the ideal generated by $\bar c_{n-k+2}$, $\bar c_{n-k+3}$, $\cdots$, $\bar c_{n+1}$. The inclusion  (\ref{inclusion}) induces an epimorphism  on integral cohomology.

\begin{proposition}
\label{schubert2}
Let $n\geq k\geq 1$. Then in (\ref{0909-2}),
\begin{eqnarray*}
\text{height}(c_1)=\text{dim}_{\mathbb{C}}G_k(\mathbb{C}^{n+1}).
\end{eqnarray*}
In particular, if $h(n)$ denotes the height of $c_1$ in $H^*(G_2(\mathbb{C}^{n+1});\mathbb{Z})$, then $h(n)=2n-2$. 
\end{proposition}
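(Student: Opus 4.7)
The plan is to sandwich the height of $c_1$ between a cohomological-degree ceiling and a Schubert-calculus floor.

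For the upper bound, I would observe that $G_k(\mathbb{C}^{n+1})$ is a closed complex manifold of real dimension $2k(n+1-k)$, so $H^i(G_k(\mathbb{C}^{n+1});\mathbb{Z})=0$ for $i>2k(n+1-k)$. Since $|c_1|=2$, the element $c_1^{k(n+1-k)+1}$ lies in degree $2k(n+1-k)+2$ and must vanish in (\ref{0909-2}). This gives $\mathrm{height}(c_1)\leq k(n+1-k)=\dim_{\mathbb{C}}G_k(\mathbb{C}^{n+1})$.

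For the matching lower bound, I would pass to the Schubert basis $\{\sigma_\lambda\}$ of $H^*(G_k(\mathbb{C}^{n+1});\mathbb{Z})$, indexed by partitions $\lambda$ fitting inside the $k\times(n+1-k)$ box, with $\sigma_{\lambda_0}$ the Poincar\'e dual of a point for the full rectangle $\lambda_0=(n+1-k)^k$. Under the presentation (\ref{0909-2}), $c_1$ coincides, up to sign, with the codimension-one special Schubert class $\sigma_{(1)}$ (identifiable through $c_1(Q)=\bar{c}_1=-c_1$ and the standard description $\sigma_{(1)}=c_1(Q)$). Iterating the Pieri rule and noting that $\lambda_0$ is the only partition of size $k(n+1-k)$ fitting in the bounding box yields
\begin{equation*}
c_1^{k(n+1-k)}=\pm\,N\,\sigma_{\lambda_0},
\end{equation*}
where $N$ counts chains of Young diagrams grown by one box at a time inside the $k\times(n+1-k)$ box until reaching $\lambda_0$, i.e.\ the number of standard Young tableaux of shape $\lambda_0$; this is positive by the hook length formula. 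Hence $c_1^{k(n+1-k)}\neq 0$ and $\mathrm{height}(c_1)=k(n+1-k)$. Specializing to $k=2$ gives $h(n)=2(n+1-2)=2n-2$ immediately, with $N$ equal to the Catalan number $C_{n-1}$ (matching, for instance, the degree $2$ of the Klein quadric when $n=3$).

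The only genuine obstacle is the non-vanishing of $c_1^{k(n+1-k)}$ after the high-degree relations $\bar{c}_{n-k+2}=\cdots=\bar{c}_{n+1}=0$ are imposed; the Pieri/hook-length route above seems cleanest. As a backup I would keep in reserve the Pl\"ucker embedding $G_k(\mathbb{C}^{n+1})\hookrightarrow\mathbb{C}P^{\binom{n+1}{k}-1}$, under which $c_1$ pulls back, up to sign, to the hyperplane class, so that $\langle c_1^{k(n+1-k)},[G_k(\mathbb{C}^{n+1})]\rangle$ equals, up to sign, the positive degree of the Pl\"ucker variety, giving the same conclusion without Young-tableau bookkeeping.
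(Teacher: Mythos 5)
Your proposal is correct. Your primary route differs from the paper's: you obtain the upper bound from the dimension of the manifold and the lower bound from Schubert calculus, identifying $c_1$ with $-\sigma_{(1)}$ and iterating the Pieri rule to get $c_1^{k(n+1-k)}=\pm N\,\sigma_{\lambda_0}$ with $N$ the (positive) number of standard Young tableaux of the $k\times(n+1-k)$ rectangle. The paper instead disposes of both bounds at once with exactly the argument you keep ``in reserve'': under the Pl\"ucker embedding $G_k(\mathbb{C}^{n+1})\hookrightarrow\mathbb{C}P^r$, the class $c_1$ pulls back (up to sign) from the hyperplane class, which is ample, so $c_1^{d}\neq 0$ for $d=\dim_{\mathbb{C}}G_k(\mathbb{C}^{n+1})$ and $c_1^{d+1}=0$ for degree reasons. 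What your main route buys is a self-contained, purely combinatorial computation inside the cohomology ring that even identifies the top intersection number (the Catalan number $C_{n-1}$ for $k=2$, degree $2$ of the Klein quadric for $n=3$); what the paper's route buys is brevity, at the cost of importing ampleness/positivity from algebraic geometry. One cosmetic point: the sign ambiguity in identifying $c_1$ with $\sigma_{(1)}$ is harmless since only non-vanishing of a power is at stake, but it is worth saying so explicitly, as you do.
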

\begin{proof}
The Pluecker embedding embeds   $G_k(\mathbb C^{n+1})$  as a subvariety into  $\mathbb{C}P^r$, where $r$ is the complex dimension of the $k$-th exterior power of $\mathbb{C}^{n+1}$.  It follows  that $c_1$ is the pull-back a generator of $H^2(\mathbb {C}P^r;\mathbb{Z})$, which is an ample class. Since the complex dimension of $G_k(\mathbb{C}^{n+1})$ is  $k(n+1-k)$, it follows that  $c_1^{k(n+1-k)}$ is non-zero and $c_1^{k(n+1-k)+1}$ is zero.  The assertion follows.  
\end{proof}
\begin{remark}
For the backgrounds of the above proof, we  may refer to \cite{griff}. 
\end{remark}

\section{Characteristic classes of the canonical vector bundle over configuration spaces}\label{subs2.4}

Let $\Sigma_k$ be the permutation group of order $k$ and let the $k$-th configuration space of $M$ be
\begin{eqnarray*}
F(M,k)=\{(x_1,\cdots, x_k)\in M\times \cdots \times M\mid \text{for any }i\neq j, x_i\neq x_j\}.
\end{eqnarray*}
For any $\sigma\in \Sigma_k$, let $\sigma$ act on $F(M,k)$ by
\begin{eqnarray*}
(x_1,\cdots,x_k)\sigma=(x_{\sigma(1)},\cdots,x_{\sigma(k)}) 
\end{eqnarray*}
and act on $\mathbb{F}^k$ by
\begin{eqnarray*}
\sigma(r_1,\cdots,r_k)=(r_{\sigma^{-1}(1)},\cdots,r_{\sigma^{-1}(k)}).
\end{eqnarray*}
Then we have a space  $F(M,k)/\Sigma_k$, called the $k$-th unordered configuration space of $M$,  and an
 $O(\mathbb{F}^k)$-bundle
\begin{eqnarray*}
\xi_{M,k}^{\mathbb{F}}: \mathbb{F}^k\longrightarrow F(M,k)\times_{\Sigma_k}\mathbb{F}^k\longrightarrow F(M,k)/\Sigma_k.
\end{eqnarray*}
We denote the classifying map of $\xi_{M,k}^\mathbb{F}$ as $$h:F(M,k)/\Sigma_k\longrightarrow G_k(\mathbb{F}^\infty).$$ 
We consider a $\Sigma_k$-invariant subspace  $W_{k}^\mathbb{F}$ of $\mathbb{F}^k$ consisting of vectors $(x_1,x_2,\cdots,x_k)$ such that $\sum_{i=1}^k x_i=0$.  Then we have a $O(\mathbb{F}^{k-1})$-bundle 
\begin{eqnarray*}
\zeta_{M,k}^\mathbb{F}: W_{k}^\mathbb{F}\longrightarrow F(M,k)\times_{\Sigma_k}W_{k}^\mathbb{F}\longrightarrow F(M,k)/\Sigma_k.
\end{eqnarray*}
Let $\epsilon_{M,k}^{\mathbb{F}}$ be the trivial $\mathbb{F}^1$-bundle over $F(M,k)/\Sigma_k$. Then we have an isomorphism of vector bundles
\begin{eqnarray}\label{0930-6}
\xi_{M,k}^\mathbb{F}\cong \zeta_{M,k}^\mathbb{F}\oplus \epsilon_{M,k}^\mathbb{F}.
\end{eqnarray}
For simplicity,    we omit the symbol $\mathbb{F}$ in (\ref{0930-6}) if $\mathbb{F}=\mathbb{R}$. 

\smallskip

For a vector bundle $\eta$, we denote $w_i(\eta)$ and $w(\eta)$ as its $i$-th Stiefel-Whitney class and its total Stiefel-Whitney class respectively, and  $\bar w(\eta)=1/w(\eta)$ the dual Stiefel-Whitney class. For a complex vector bundle $\eta^\mathbb{C}$, we denote $c_i(\eta^\mathbb{C})$ and $c(\eta^\mathbb{C})$ as its $i$-th Chern class and its total Chern class respectively, and  $\bar c(\eta^{\mathbb{C}})=1/c(\eta^{\mathbb{C}})$ the dual Chern class. It follows from (\ref{0930-6}) that
\begin{eqnarray}
\label{0703r}
w_k(\xi_{M,k})&=&0,\\
\label{0703c}
c_k(\xi_{M,k}^\mathbb{C})&=&0.
\end{eqnarray} 
By detecting the dual Stiefel-Whitney class and the dual Chern class, the next two lemmas give    obstructions for (real and complex) $k$-regular maps. 

\begin{lemma}\cite{cohen1}  \label{cor1}
Lex $f: M\longrightarrow \mathbb{R}^N$ be a $k$-regular map on the manifold $M$. If $\bar w_t(\xi_{M,k})\neq 0$, then $N\geq t+k$.
\end{lemma}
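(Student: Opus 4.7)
The approach is the classical Cohen--Handel obstruction: use the $k$-regularity of $f$ to realize $\xi_{M,k}$ as a sub-bundle of a trivial vector bundle over $F(M,k)/\Sigma_k$, and then read off the dimension constraint from a Whitney sum decomposition.

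Concretely, first I would construct a bundle monomorphism $\Phi\colon \xi_{M,k}\longrightarrow \epsilon^N$, where $\epsilon^N$ denotes the trivial rank-$N$ real vector bundle over $F(M,k)/\Sigma_k$. Starting from the map
\begin{eqnarray*}
F(M,k)\times \mathbb{R}^k \longrightarrow \mathbb{R}^N, \qquad \bigl((x_1,\ldots,x_k),(r_1,\ldots,r_k)\bigr)\longmapsto \sum_{i=1}^k r_i f(x_i),
\end{eqnarray*}
which is invariant under the diagonal $\Sigma_k$-action defining $\xi_{M,k}$ (permuting the index $i$ merely relabels the terms of the sum), one obtains the desired morphism $\Phi$ on the quotient. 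The crucial point is that $\Phi$ is fibrewise injective: for every ordered tuple $(x_1,\ldots,x_k)\in F(M,k)$ the vectors $f(x_1),\ldots,f(x_k)$ are linearly independent in $\mathbb{R}^N$ by $k$-regularity, so the fibre map $(r_1,\ldots,r_k)\mapsto \sum r_if(x_i)$ has trivial kernel.

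Second, since $F(M,k)/\Sigma_k$ is paracompact (it is the quotient of an open subset of the smooth manifold $M^k$ by a free action of the finite group $\Sigma_k$), I would endow $\epsilon^N$ with a fibrewise Euclidean metric and take the orthogonal complement $\eta$ of $\Phi(\xi_{M,k})$, obtaining a real vector bundle of rank $N-k$ with $\xi_{M,k}\oplus \eta\cong \epsilon^N$. Applying the total Stiefel--Whitney class and the Whitney sum formula yields
\begin{eqnarray*}
w(\xi_{M,k})\cdot w(\eta)=w(\epsilon^N)=1,
\end{eqnarray*}
so $w(\eta)=\bar w(\xi_{M,k})$; in particular $w_t(\eta)=\bar w_t(\xi_{M,k})$ for every $t\geq 0$.

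Finally, since $\eta$ has rank $N-k$, its Stiefel--Whitney class $w_t(\eta)$ vanishes for $t>N-k$. The hypothesis $\bar w_t(\xi_{M,k})\neq 0$ therefore forces $t\leq N-k$, i.e.\ $N\geq t+k$, which is the desired bound. There is no genuine obstacle in this argument: the only step calling for a word of justification is the existence of the orthogonal complement $\eta$, and this is a standard consequence of paracompactness of the base. The rest is a direct instance of the principle that embedding a vector bundle into a trivial bundle of rank $N$ forces the dual Stiefel--Whitney classes to vanish above degree $N-\operatorname{rank}$.
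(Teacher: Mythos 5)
Your proof is correct, and it is essentially the original Cohen--Handel argument (the paper itself states this lemma as a citation of \cite{cohen1} and gives no proof). The construction of the fibrewise-injective morphism $\Phi$ is right: the map $((x_1,\ldots,x_k),(r_1,\ldots,r_k))\mapsto\sum_i r_i f(x_i)$ is invariant under the simultaneous permutation defining $\xi_{M,k}$, and injectivity on fibres is exactly $k$-regularity; paracompactness of $F(M,k)/\Sigma_k$ then gives the rank-$(N-k)$ complement $\eta$ with $w(\eta)=\bar w(\xi_{M,k})$, killing $\bar w_t$ for $t>N-k$. It is worth noting that when the paper proves its own generalization of this lemma (Proposition~\ref{c2pr8}), it takes a different route: it passes to the classifying map into the finite Grassmannian $G_{\sum k_i}(\mathbb{R}^N)$ and uses the presentation of $H^*(G_k(\mathbb{R}^{N});\mathbb{Z}_2)$ in which $\bar w_{N-\sum k_i+1},\ldots,\bar w_N$ vanish, followed by a stabilization argument (composing with inclusions $\mathbb{R}^N\hookrightarrow\mathbb{R}^m$ for $m>N$) to rule out nonvanishing in degrees above $N$. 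Your inverse-bundle argument gets the vanishing of $\bar w_t(\xi_{M,k})$ for \emph{all} $t>N-k$ in a single step, since $\eta$ has honest rank $N-k$, and so avoids the stabilization; the paper explicitly acknowledges this alternative in the remark following Proposition~\ref{c2pr8}. The Grassmannian formulation is what the paper needs anyway for the product and disjoint-union settings, but for the single-manifold statement at hand your argument is complete and slightly more economical.
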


\begin{lemma}\cite{high2} \label{cor2}
Let  $f: M\longrightarrow \mathbb{C}^N$ be a complex $k$-regular map on the manifold $M$. If $\bar c_t(\xi_{M,k}^\mathbb{C})\neq 0$, then $N\geq t+k$.
\end{lemma}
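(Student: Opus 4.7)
The argument is the complex analogue of Lemma~\ref{cor1}: a complex $k$-regular map $f$ forces the classifying map of $\xi_{M,k}^{\mathbb{C}}$ to factor through a finite-dimensional Grassmannian in which high-degree dual Chern classes vanish for rank reasons.

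Concretely, I would first use $f$ to build a lift of the classifying map $h$. Define
\[
\tilde h : F(M,k)/\Sigma_k \longrightarrow G_k(\mathbb{C}^N), \qquad \{x_1,\dots,x_k\}\longmapsto \mathrm{span}_{\mathbb{C}}\{f(x_1),\dots,f(x_k)\}.
\]
Complex $k$-regularity guarantees that the span is exactly $k$-dimensional, so $\tilde h$ is well defined, continuous, and independent of ordering. The pullback $\tilde h^{*}\gamma_k^N$ of the tautological $k$-plane bundle $\gamma_k^N$ over $G_k(\mathbb{C}^N)$ is isomorphic to $\xi_{M,k}^{\mathbb{C}}$ via the fibrewise map sending the class of $(x_1,\dots,x_k;a_1,\dots,a_k)\in F(M,k)\times_{\Sigma_k}\mathbb{C}^k$ to $(\{x_1,\dots,x_k\},\sum_i a_if(x_i))$; linear independence of the $f(x_i)$ makes this a fibrewise isomorphism, and $\Sigma_k$-equivariance makes it well defined on the quotient. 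Composing with the inclusion $i:G_k(\mathbb{C}^N)\hookrightarrow G_k(\mathbb{C}^\infty)$ from (\ref{inclusion}) recovers $h$ up to homotopy.

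Second, I would push the universal dual Chern class through this factorization. Since $\bar c_t(\xi_{M,k}^{\mathbb{C}})=h^{*}\bar c_t=\tilde h^{*}(i^{*}\bar c_t)$, vanishing of $i^{*}\bar c_t$ in $H^{2t}(G_k(\mathbb{C}^N);\mathbb{Z})$ would force $\bar c_t(\xi_{M,k}^{\mathbb{C}})=0$. By definition $\bar c$ is the total Chern class of the orthogonal complement of $\gamma_k^N$ inside the trivial rank-$N$ complex bundle over $G_k(\mathbb{C}^N)$, a bundle of complex rank $N-k$; hence $i^{*}\bar c_j=0$ for every $j>N-k$. The same vanishing can also be read off directly from the presentation~(\ref{0909-2}) with $n+1=N$. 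The hypothesis $\bar c_t(\xi_{M,k}^{\mathbb{C}})\neq 0$ therefore forces $t\leq N-k$, that is, $N\geq t+k$.

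No substantial obstacle arises in this argument; the only technical point is checking that $\tilde h^{*}\gamma_k^N\cong\xi_{M,k}^{\mathbb{C}}$ as complex vector bundles, which is immediate from the explicit trivialization above.
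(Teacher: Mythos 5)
Your argument is correct. Note that the paper does not actually prove Lemma~\ref{cor2}; it quotes it from \cite{high2} and only later proves the generalization to disjoint unions (Proposition~\ref{c2pr113}, via Proposition~\ref{c2pr6} and the diagram preceding (\ref{0930-8})). Your proof follows the same skeleton as that generalization: complex $k$-regularity yields a lift $\tilde h$ of the classifying map through the finite Grassmannian $G_k(\mathbb{C}^N)$, the tautological bundle pulls back to $\xi_{M,k}^{\mathbb{C}}$, and one concludes from the vanishing of high dual Chern classes over $G_k(\mathbb{C}^N)$. The one place you diverge is the final vanishing step: you observe that $\bar c(\gamma_k^N)$ is the total Chern class of the rank-$(N-k)$ complement bundle, so $i^*\bar c_j=0$ for \emph{all} $j>N-k$ at once. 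The paper, working only from the presentation (\ref{0909-2}), gets the vanishing of $\bar c_t$ a priori only in the window $N-k+1\leq t\leq N$, and must then rule out $t\geq N+1$ by a separate stabilization argument (composing $f$ with inclusions $\mathbb{C}^N\hookrightarrow\mathbb{C}^m$ for $m=N+1,N+2,\dots$, as in the proof of Proposition~\ref{c2pr8}). Your complement-bundle observation makes that extra step unnecessary and is the cleaner route; both are standard and both are complete.
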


The next lemma gives the height of the first Stiefel-Whitney class of a closed, connected manifold. 

\begin{lemma}\cite{handel2,wu}\label{lewu}
Let $M$ be a closed, connected $m$-dimensional manifold. Suppose $q$ is the largest integer such that $\bar w_q(M)\neq 0$. Then 
\begin{eqnarray*}
w_1(\xi_{M,2})^{m+q}&\neq& 0,\\
w_1(\xi_{M,2})^{m+q+1}&=&0.
\end{eqnarray*}
 \end{lemma}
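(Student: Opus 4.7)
The plan is to identify $\alpha := w_1(\xi_{M,2})$ as the pullback of a tautological class on a projective bundle, compute its height there via the Segre identity, and upgrade the resulting bound by one using Lefschetz duality. By the splitting $\xi_{M,2}\cong\zeta_{M,2}\oplus\epsilon_{M,2}$ in (\ref{0930-6}) together with the triviality of $\epsilon_{M,2}$, one has $\alpha=w_1(\zeta_{M,2})$; and $\zeta_{M,2}$ is the real line bundle associated to the free double cover $F(M,2)\to F(M,2)/\Sigma_2$, so $\alpha$ is the pullback of the generator of $H^1(\mathbb{R}P^\infty;\mathbb{Z}_2)$ along the classifying map of this cover.

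Next I would set up a geometric model. Fix a Riemannian metric on $M$; the exponential map supplies a $\Sigma_2$-equivariant identification of a neighborhood of the zero section of $TM$ with a neighborhood of the diagonal $\Delta\subset M\times M$, so that the swap action matches the antipodal action on fibers of $TM$. Blowing up $SP^2(M)=(M\times M)/\Sigma_2$ along $\Delta\cong M$ then gives a compact $2m$-manifold with boundary $X$ satisfying $\partial X\cong P(TM)$, whose interior is $F(M,2)/\Sigma_2$, with the inclusion of the interior into $X$ a homotopy equivalence. The restriction of $\zeta_{M,2}$ to $\partial X$ is the tautological line bundle of $P(TM)$, so $\alpha|_{\partial X}=\tilde\alpha$, the canonical class in $H^1(P(TM);\mathbb{Z}_2)$. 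From the projective-bundle presentation
\begin{eqnarray*}
H^*(P(TM);\mathbb{Z}_2)=H^*(M;\mathbb{Z}_2)[\tilde\alpha]\Big/\Big(\sum_{i=0}^m w_i(TM)\,\tilde\alpha^{m-i}\Big),
\end{eqnarray*}
the Segre identity $\pi_!(\tilde\alpha^{m-1+j})=\bar w_j(TM)$ for $\pi:P(TM)\to M$, and Poincar\'e duality on the closed manifold $M$, one sees that $\tilde\alpha^k\neq 0$ iff some $\bar w_j(TM)$ with $j\in[\max(0,k-m+1),k]$ is non-zero; hence the height of $\tilde\alpha$ on $P(TM)$ equals $m+q-1$, and restriction yields $\alpha^{m+q-1}\neq 0$ in $H^*(F(M,2)/\Sigma_2;\mathbb{Z}_2)$.

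To upgrade the lower bound from $m+q-1$ to $m+q$ and to establish the vanishing $\alpha^{m+q+1}=0$, I would apply Lefschetz duality on the compact manifold with boundary $X$: the cup product
\begin{eqnarray*}
H^{m+q}(X;\mathbb{Z}_2)\otimes H^{m-q}(X,\partial X;\mathbb{Z}_2)\longrightarrow H^{2m}(X,\partial X;\mathbb{Z}_2)=\mathbb{Z}_2
\end{eqnarray*}
is a perfect pairing. One produces a class $\gamma\in H^{m-q}(X,\partial X;\mathbb{Z}_2)$ corresponding to $\bar w_q(TM)$ via the $\Sigma_2$-equivariant Thom isomorphism for the normal bundle of $\Delta\subset SP^2(M)$ (twisted by the sign representation), and computes $\alpha^{m+q}\smile\gamma$ by restricting to $\partial X$ and again applying the Segre formula on $P(TM)$, obtaining an evaluation of the form $\int_M\bar w_q(TM)\cdot v$ for a suitable $v\in H^{m-q}(M;\mathbb{Z}_2)$, which is non-zero by Poincar\'e duality on $M$. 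The vanishing $\alpha^{m+q+1}=0$ follows from the same pairing analysis combined with $\bar w_{q+1}(TM)=0$. The main obstacle is this last step: constructing $\gamma$ via the equivariant Thom isomorphism for the antipodally-twisted normal bundle of the diagonal, which forces one to work on the blow-up $X$ rather than on the singular orbit space $SP^2(M)$ directly.
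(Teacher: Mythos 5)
The paper offers no proof of this lemma; it is quoted from Handel \cite{handel2} and ultimately from Wu \cite{wu}, so your attempt has to be judged on its own. The first half of your argument is correct and is indeed the standard geometric setup: $\alpha=w_1(\xi_{M,2})=w_1(\zeta_{M,2})$ is the class of the double cover, $F(M,2)/\Sigma_2$ deformation retracts onto a compact manifold $X$ with $\partial X\cong P(TM)$, $\alpha|_{\partial X}=\tilde\alpha$, and the projective bundle formula plus Poincar\'e duality on the closed manifold $P(TM)$ shows that the height of $\tilde\alpha$ there is exactly $m+q-1$. This correctly yields $\alpha^{m+q-1}\neq 0$, one short of the claim.

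The upgrade to $m+q$ — which is the entire content of the lemma (it is what makes the sphere case give $m$ rather than $m-1$) — has a genuine gap, and the mechanism you describe cannot close it. If $\gamma=\delta\beta$ for some $\beta\in H^{m-q-1}(\partial X;\mathbb{Z}_2)$, then $\alpha^{m+q}\smile\delta\beta=\delta\bigl(\tilde\alpha^{m+q}\smile\beta\bigr)=0$ precisely because the height of $\tilde\alpha$ is $m+q-1$; so any witness $\gamma\in H^{m-q}(X,\partial X;\mathbb{Z}_2)$ for the Lefschetz pairing must lie \emph{outside} the image of $\delta$, i.e.\ must have nonzero image in $H^{m-q}(X;\mathbb{Z}_2)$, and its pairing with $\alpha^{m+q}$ therefore cannot be evaluated ``by restricting to $\partial X$ and applying the Segre formula'' — any computation that factors through $\partial X$ sees only $\tilde\alpha^{m+q}=0$. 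The proposed source of $\gamma$ is also ill-posed: the normal cone of $\Delta$ in $SP^2(M)$ is $TM/\{\pm 1\}$, a bundle of cones on $\mathbb{R}P^{m-1}$ rather than a vector bundle, so there is no Thom isomorphism of the kind you invoke. The standard repair (Wu's actual route) is to work in Borel equivariant cohomology of $M\times M$, where the normal bundle of $\Delta$ is an honest $\Sigma_2$-vector bundle with equivariant Euler class $\sum_i w_i(M)\,t^{m-i}$; the lemma becomes the statement that $t^N$ lies in the image of the equivariant Gysin map exactly when $N\geq m+q+1$. Comparing restrictions to the fixed set only rules out $N\leq m+q-1$; showing that $t^{m+q}$ is still not in the image, even though its restriction to $\Delta$ is divisible by the Euler class, requires knowing the ring $H^*_{\Sigma_2}(M\times M;\mathbb{Z}_2)$ (Wu's, or Nakaoka's, computation of the symmetric square), and the same computation is what gives the vanishing $\alpha^{m+q+1}=0$, which your sketch likewise leaves unestablished. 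As written, your argument proves only $\alpha^{m+q-1}\neq 0$.
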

 
The next corollary follows from Lemma~\ref{lewu}.  
 
 \begin{corollary}\label{c2co3}
 Let $M$ be a closed, connected $m$-dimensional manifold. Suppose $q$ is the largest integer such that $\bar w_q(M)\neq 0$. Then 
\begin{eqnarray*}
\bar w_{m+q}(\xi_{M,2})&\neq& 0,\\
\bar w_{m+q+1}(\xi_{M,2})&=&0.
\end{eqnarray*} 
\end{corollary}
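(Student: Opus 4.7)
The plan is to reduce the corollary directly to Lemma~\ref{lewu} by expressing each dual Stiefel--Whitney class $\bar w_t(\xi_{M,2})$ as a power of $w_1(\xi_{M,2})$. The key point is that $\xi_{M,2}$ admits a trivial line bundle summand, and the complementary summand is also a line bundle, so the total Stiefel--Whitney class $w(\xi_{M,2})$ is a binomial of degree $1$ in the cohomology ring.

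More concretely, I would first invoke the isomorphism (\ref{0930-6}) in the case $k=2$, which gives $\xi_{M,2}\cong \zeta_{M,2}\oplus\epsilon_{M,2}$. The fibre of $\zeta_{M,2}$ is $W_2=\{(x_1,x_2)\in\mathbb R^2\mid x_1+x_2=0\}$, a $1$-dimensional space, so $\zeta_{M,2}$ is a real line bundle. Consequently $w(\zeta_{M,2})=1+w_1(\zeta_{M,2})$ and $w(\epsilon_{M,2})=1$, so by Whitney product
$$w(\xi_{M,2})=1+w_1(\xi_{M,2}),\qquad w_1(\xi_{M,2})=w_1(\zeta_{M,2}).$$
Inverting this in the cohomology ring by the geometric series yields
$$\bar w(\xi_{M,2})=\frac{1}{1+w_1(\xi_{M,2})}=\sum_{t\geq 0} w_1(\xi_{M,2})^{t},$$
and therefore $\bar w_t(\xi_{M,2})=w_1(\xi_{M,2})^{t}$ for every $t\geq 0$.

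Having made this identification, the corollary is an immediate translation of Lemma~\ref{lewu}: the non-vanishing of $w_1(\xi_{M,2})^{m+q}$ and the vanishing of $w_1(\xi_{M,2})^{m+q+1}$ become exactly $\bar w_{m+q}(\xi_{M,2})\neq 0$ and $\bar w_{m+q+1}(\xi_{M,2})=0$. There is no genuine obstacle here; the only observation needed is that for $k=2$ the reduced bundle $\zeta_{M,2}$ has rank one, which collapses the dual Stiefel--Whitney class into a single geometric series. (The same trick would fail for $k\geq 3$, where $\zeta_{M,k}$ has rank $\geq 2$ and $\bar w(\xi_{M,k})$ is no longer determined by $w_1$ alone.)
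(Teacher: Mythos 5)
Your proof is correct and follows essentially the same route as the paper: both use the splitting $\xi_{M,2}\cong\zeta_{M,2}\oplus\epsilon_{M,2}$ to see that $w(\xi_{M,2})=1+w_1(\xi_{M,2})$, invert by the geometric series so that $\bar w_t(\xi_{M,2})=w_1(\xi_{M,2})^t$, and then read off the conclusion from Lemma~\ref{lewu}. Your write-up merely makes explicit the rank-one observation that the paper leaves implicit in equations (\ref{0930-6}) and (\ref{0703r}).
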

\begin{proof}
With the help of Lemma~\ref{lewu},
\begin{eqnarray*}
\bar w(\xi_{M,2})&=&(1+w_1(\xi_{M,2}))^{-1}\\
&=&1+\sum_{l=1}^{m+q}(-1)^lw_1(\xi_{M,2})^l.
\end{eqnarray*}
This implies Corollary~\ref{c2co3}.
\end{proof}

For a connected manifold $M$, we notice that  $F(M,k)$ is connected and the covering map from $F(M,k)$ to $F(M,k)/\Sigma_k$ induces an epimorphism 
\begin{eqnarray*}
\pi: \pi_1(F(M,k)/\Sigma_k)\longrightarrow \Sigma_k. 
\end{eqnarray*}
Let $r: \Sigma_k\longrightarrow  \{\pm 1\}$ be the sign representation of $\Sigma_k$.    Since  $\pi$ is surjective and 
$ r$ is non-trivial,  the map $r\circ \pi$ is non-trivial.   Moreover, it is direct to verify that  there is a bijection between $\text{Hom}(\pi_1(F(M,k)/\Sigma_k),\mathbb{Z}_2)$  and $\text{Vect}_{\mathbb{R}}^1(F(M,k)/\Sigma_k)$, and this bijection sends $r\circ \pi$ to the determinant line bundle of $\xi_{M,k}$.   Consequently, the determinant line bundle of $\xi_{M,k}$ is non-trivial.   Therefore, $\xi_{M,k}$ is non-orientable.  The next lemma follows.  

\begin{lemma}\label{0309-l1}
Let $M$ be a connected manifold.  Then  $w_1(\xi_{M,k})\neq 0$.
\end{lemma}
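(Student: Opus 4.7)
The plan is to deduce $w_1(\xi_{M,k})\neq 0$ from the non-triviality of the determinant line bundle $\det\xi_{M,k}$, using the identity $w_1(\xi_{M,k})=w_1(\det\xi_{M,k})$ together with the standard fact that a real line bundle is trivial if and only if its first Stiefel--Whitney class vanishes.

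First I would invoke the classification of real line bundles over a paracompact base $X$ by $H^1(X;\mathbb{Z}_2)\cong\text{Hom}(\pi_1(X),\mathbb{Z}_2)$, specialised to $X=F(M,k)/\Sigma_k$. By construction $\xi_{M,k}=F(M,k)\times_{\Sigma_k}\mathbb{R}^k$ with $\Sigma_k$ acting by coordinate permutation, so $\det\xi_{M,k}=F(M,k)\times_{\Sigma_k}\mathbb{R}$ with $\Sigma_k$ acting through the sign character $r\colon\Sigma_k\longrightarrow\{\pm 1\}$. Hence the homomorphism classifying $\det\xi_{M,k}$ is precisely $r\circ\pi$, where $\pi\colon\pi_1(F(M,k)/\Sigma_k)\longrightarrow\Sigma_k$ is the monodromy of the $\Sigma_k$-cover $F(M,k)\longrightarrow F(M,k)/\Sigma_k$. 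To conclude non-triviality of $\det\xi_{M,k}$ it therefore suffices to show that $r\circ\pi$ is non-trivial, and since $r$ is non-zero on any transposition this reduces to proving that $\pi$ is surjective.

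Surjectivity of $\pi$ is equivalent, by the Galois correspondence for covering spaces, to the connectedness of the total space $F(M,k)$, and this is the only genuine obstacle in the argument. It follows from the paper's standing hypothesis that $M$ is a connected manifold of dimension at least $2$: any two ordered $k$-tuples of distinct points in $M$ can be joined by a path in $F(M,k)$ by moving coordinates one at a time along paths that avoid the remaining points, which is possible because the bad locus has codimension at least $2$. The dimension hypothesis is essential, since the analogous statement already fails for $1$-manifolds. Everything else is a formal identification of classifying data, so once connectedness of $F(M,k)$ is in hand the lemma follows immediately.
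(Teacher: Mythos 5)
Your argument is correct and follows essentially the same route as the paper: both identify the determinant line bundle of $\xi_{M,k}$ with the class of $r\circ\pi$ under the bijection $\mathrm{Vect}^1_{\mathbb{R}}(F(M,k)/\Sigma_k)\cong\mathrm{Hom}(\pi_1(F(M,k)/\Sigma_k),\mathbb{Z}_2)$ and deduce non-orientability of $\xi_{M,k}$ from surjectivity of the monodromy $\pi$, which rests on the connectedness of $F(M,k)$ under the standing hypothesis $\dim M\geq 2$. Your write-up merely makes explicit the connectedness argument that the paper states without proof.
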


The next two theorems give the cohomology ring of the second unordered configuration space of complex projective spaces. 

\begin{theorem}\cite[Theorem 4.9]{yasui} \label{thm0419}
As a $H^*(G_2(\mathbb{C}^{m+1});\mathbb{Z}_2)$-module, the cohomology 
\begin{eqnarray}\label{con1}
H^*(F(\mathbb{C}P^m,2)/\Sigma_2;\mathbb{Z}_2)
\end{eqnarray}
 has $\{1,v,v^2\}$ as a basis. 
Moreover, the ring structure of (\ref{con1}) is given by $v^3=e_1 v$. Here $v=w_1(\xi_{\mathbb{C}P^m,2})$.
\end{theorem}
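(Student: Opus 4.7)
The plan is to present $F(\mathbb{C}P^m,2)/\Sigma_2$ as a fiber bundle over $G_2(\mathbb{C}^{m+1})$ via the span map
$$\phi\colon F(\mathbb{C}P^m,2)/\Sigma_2\longrightarrow G_2(\mathbb{C}^{m+1}),\qquad \{\ell_1,\ell_2\}\mapsto \ell_1+\ell_2,$$
which sends an unordered pair of distinct complex lines to the complex $2$-plane they span. The fiber over $V$ is the unordered $2$-configuration space of $\mathbb{P}(V)\cong \mathbb{C}P^1\cong S^2$. A sliding deformation retraction of $F(S^2,2)$ onto the locus of antipodal pairs (which is homeomorphic to $S^2$) intertwines the swap action with the antipodal involution, so the fiber is homotopy equivalent to $\mathbb{R}P^2$.

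Next I would run the mod $2$ Serre spectral sequence of this fibration. Since $G_2(\mathbb{C}^{m+1})$ is simply connected, the local system is trivial and
$$E_2^{p,q}\cong H^p(G_2(\mathbb{C}^{m+1});\mathbb{Z}_2)\otimes H^q(\mathbb{R}P^2;\mathbb{Z}_2),$$
concentrated in rows $q=0,1,2$ with $q$-generators $1,v,v^2$. The crucial observation is that $v=w_1(\xi_{\mathbb{C}P^m,2})$ is a globally defined cohomology class on the total space, and its restriction to the fiber equals $w_1(\xi_{\mathbb{C}P^1,2})$, which is non-zero by Lemma~\ref{0309-l1}. Hence $v$ is a permanent cycle, $v^2$ is a permanent cycle by the mod~$2$ Leibniz rule, and since base classes are permanent cycles for trivial degree reasons, Leibniz forces $d_r(a v^i)=0$ for all $a\in H^*(G_2(\mathbb{C}^{m+1});\mathbb{Z}_2)$ and $i=0,1,2$. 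Thus the spectral sequence collapses at $E_2$, establishing the claimed free module basis $\{1,v,v^2\}$.

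For the ring relation I would argue by filtration. The class $v^3$ has total degree $3$; its image in $E_\infty^{0,3}$ is zero since the fiber has no $H^3$, so $v^3\in F^1 H^3$. Because $H^{\mathrm{odd}}(G_2(\mathbb{C}^{m+1});\mathbb{Z}_2)=0$, we have $E_\infty^{1,2}=0$ and $E_\infty^{3,0}=0$, so $v^3\in F^2 H^3$ with image $e_1\,v$ in $E_\infty^{2,1}\cong H^2(G_2(\mathbb{C}^{m+1});\mathbb{Z}_2)\cdot v$ for some $e_1\in H^2(G_2(\mathbb{C}^{m+1});\mathbb{Z}_2)$, and the vanishing of $F^3 H^3=E_\infty^{3,0}$ upgrades this congruence to the equality $v^3=e_1\,v$. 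The main obstacle, and the only non-formal step, is to identify the class $e_1$ explicitly. This one carries out by comparing the rank-$2$ real bundle $\xi_{\mathbb{C}P^m,2}$ with the realification of the pullback of the tautological complex $2$-plane bundle on $G_2(\mathbb{C}^{m+1})$ under $\phi$, and reading off the resulting relation between $v^3$ and the pullback of the generator of $H^2(G_2(\mathbb{C}^{m+1});\mathbb{Z}_2)$.
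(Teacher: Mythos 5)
The paper does not prove this statement at all: it is quoted verbatim from Yasui \cite[Theorem~4.9]{yasui}, so there is no in-paper argument to compare against. Judged on its own terms, your strategy is the natural one and most of it is sound. The span fibration $\phi\colon F(\mathbb{C}P^m,2)/\Sigma_2\to G_2(\mathbb{C}^{m+1})$ with fiber $F(\mathbb{C}P^1,2)/\Sigma_2\simeq \mathbb{R}P^2$ is correct (it is the bundle associated to the tautological $U(2)$-bundle), the base is simply connected, the restriction of $\xi_{\mathbb{C}P^m,2}$ to a fiber is $\xi_{\mathbb{P}(V),2}$ so $v$ restricts to the generator of $H^1(\mathbb{R}P^2;\mathbb{Z}_2)$ by Lemma~\ref{0309-l1}, and Leray--Hirsch (equivalently, your collapse argument) then gives the free module structure with basis $\{1,v,v^2\}$. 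The filtration argument for $v^3$ is also correct: since $H^{\mathrm{odd}}(G_2(\mathbb{C}^{m+1});\mathbb{Z}_2)=0$ and $H^3(\mathbb{R}P^2)=0$, one gets $H^3$ of the total space equal to $H^2(G_2(\mathbb{C}^{m+1});\mathbb{Z}_2)\cdot v\cong\mathbb{Z}_2$, hence $v^3=e_1v$ for some $e_1\in H^2(G_2(\mathbb{C}^{m+1});\mathbb{Z}_2)$.

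The one genuine gap is the step you yourself flag: determining which of the two elements of $H^2(G_2(\mathbb{C}^{m+1});\mathbb{Z}_2)\cong\mathbb{Z}_2$ the class $e_1$ is, i.e.\ whether $v^3$ vanishes or equals $\bar c_1\,v$. Your proposed comparison of $\xi_{\mathbb{C}P^m,2}$ with the realification of $\phi^*\gamma_2$ will not decide this as stated, because $w(\xi_{\mathbb{C}P^m,2})=1+v$ carries no degree-$2$ information (indeed $\xi_{M,2}\cong\zeta_{M,2}\oplus\epsilon$ forces $w_2(\xi_{\mathbb{C}P^m,2})=0$), so no identity among Stiefel--Whitney classes of these rank-$2$ bundles can detect $e_1$. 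A workable substitute is to restrict to a sub-Grassmannian $G_2(\mathbb{C}^3)=\mathbb{C}P^2{}^{\vee}$ and compute $v^3$ in $H^3(F(\mathbb{C}P^2,2)/\Sigma_2;\mathbb{Z}_2)$ directly, or to relate $v^2$ to the mod-$2$ reduction of the integral class $u$ of Theorem~\ref{thm0503} via the Bockstein and use $u^2=c_1u$. Since the paper leaves $e_1$ undefined and only ever uses $v^2\neq 0$ from this theorem (in Lemma~\ref{1007}), your argument suffices for every application made of it here, but as a proof of the quoted ring structure it is incomplete at exactly this last identification.
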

\begin{theorem}\cite[Theorem 4.10]{yasui} \label{thm0503}
As a $H^*(G_2(\mathbb{C}^{m+1});\mathbb{Z})$-module, the cohomology 
\begin{eqnarray}\label{con2}
H^*(F(\mathbb{C}P^m,2)/\Sigma_2;\mathbb{Z})
\end{eqnarray}
 has $\{1,u\}$ as generators with $|u|=2$. Moreover, the ring structure of (\ref{con2}) satisfies
$2u=0$ and $u^2=c_1u$. Here $c_1=c_1(\xi_{\mathbb{C}P^m,2}^{\mathbb{C}})$. 
\end{theorem}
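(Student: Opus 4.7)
The plan is to realize $F(\mathbb{C}P^m,2)/\Sigma_2$ as the total space of a fiber bundle over $G_2(\mathbb{C}^{m+1})$ and run the Leray--Serre spectral sequence. Sending an unordered pair of lines $\{L_1,L_2\}\subset\mathbb{C}^{m+1}$ to the $2$-plane they span gives a continuous map
\begin{eqnarray*}
p: F(\mathbb{C}P^m,2)/\Sigma_2\longrightarrow G_2(\mathbb{C}^{m+1}),\qquad \{L_1,L_2\}\longmapsto L_1\oplus L_2,
\end{eqnarray*}
whose fiber over $V$ is the unordered configuration space $F(\mathbb{P}(V),2)/\Sigma_2\cong F(\mathbb{C}P^1,2)/\Sigma_2$. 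The tautological rank-$2$ complex bundle $\gamma$ over $G_2(\mathbb{C}^{m+1})$ pulls back along $p$ to $\xi^{\mathbb{C}}_{\mathbb{C}P^m,2}$, so $c_1=p^*c_1(\gamma)$, and the $H^*(G_2(\mathbb{C}^{m+1});\mathbb{Z})$-module structure on the cohomology of the total space is induced by $p^*$.

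Next I would compute the cohomology of the fiber. Identifying $F(S^2,2)$ with the unit tangent bundle of $S^2$ (homotopy equivalent to $SO(3)\simeq\mathbb{R}P^3$) and analyzing the swap involution shows $F(S^2,2)/\Sigma_2\simeq\mathbb{R}P^2$, so its integral cohomology is $\mathbb{Z}$, $0$, $\mathbb{Z}/2$ in degrees $0,1,2$. Since $G_2(\mathbb{C}^{m+1})$ is simply connected the Serre spectral sequence has trivial local coefficients, with
\begin{eqnarray*}
E_2^{p,0}=H^p(G_2(\mathbb{C}^{m+1});\mathbb{Z}),\quad E_2^{p,1}=0,\quad E_2^{p,2}=H^p(G_2(\mathbb{C}^{m+1});\mathbb{Z}/2).
\end{eqnarray*}
Both $E_2^{*,0}$ and $E_2^{*,2}$ are concentrated in even $p$ by (\ref{0909-2}), while the only candidate nonzero differential is $d_3:E_3^{p,2}\to E_3^{p+3,0}$, which sends a class of total degree $p+2$ to one of total degree $p+3$; these have opposite parity, so $d_3=0$ and the spectral sequence collapses at $E_2$. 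The resulting filtration yields, for each $n\geq 3$, the extension
\begin{eqnarray*}
0\longrightarrow H^n(G_2(\mathbb{C}^{m+1});\mathbb{Z})\longrightarrow H^n(F(\mathbb{C}P^m,2)/\Sigma_2;\mathbb{Z})\longrightarrow H^{n-2}(G_2(\mathbb{C}^{m+1});\mathbb{Z}/2)\longrightarrow 0,
\end{eqnarray*}
from which the cohomology is generated as an $H^*(G_2(\mathbb{C}^{m+1});\mathbb{Z})$-module by $1$ together with any degree-$2$ lift $u$ of the fiber generator.

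The hard part is pinning down the multiplicative data. For $2u=0$, I would take $u=\beta(v)$ to be the integral Bockstein of the mod-$2$ class $v=w_1(\xi_{\mathbb{C}P^m,2})$ of Theorem~\ref{thm0419}. Since $v$ restricts in the mod-$2$ Serre sequence to the generator of $H^1(\mathbb{R}P^2;\mathbb{Z}/2)$ in the fiber, $\beta(v)$ restricts to the generator of $H^2(\mathbb{R}P^2;\mathbb{Z})\cong\mathbb{Z}/2$; this makes $\beta(v)$ a valid choice of module generator, and Bockstein images are automatically $2$-torsion, giving $2u=0$.

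For the ring relation $u^2=c_1 u$, the approach is to reduce modulo $2$ and appeal to Theorem~\ref{thm0419}. Because $\xi^{\mathbb{C}}_{\mathbb{C}P^m,2}$ is the complexification of $\xi_{\mathbb{C}P^m,2}$, one has $(\xi^{\mathbb{C}}_{\mathbb{C}P^m,2})_{\mathbb{R}}\cong\xi_{\mathbb{C}P^m,2}\oplus\xi_{\mathbb{C}P^m,2}$, hence $c_1\equiv v^2\pmod 2$, while $u\equiv\beta(v)\equiv\mathrm{Sq}^1(v)\equiv v^2\pmod 2$. Therefore $u^2-c_1u\equiv v^4-v^2\cdot v^2=0\pmod 2$ in the mod-$2$ ring of Theorem~\ref{thm0419}. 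The extension sequence above shows that the kernel of mod-$2$ reduction in degree $4$ is exactly $2\cdot H^4(G_2(\mathbb{C}^{m+1});\mathbb{Z})$; combined with the fact that $u^2$ and $c_1 u$ both restrict to $0$ in the fiber (which has $H^4=0$), naturality of the edge homomorphism identifies $u^2-c_1 u$ up to the ambiguity in the choice of $u$ (which is $p^*H^2(G_2(\mathbb{C}^{m+1});\mathbb{Z})=\mathbb{Z}\langle c_1\rangle$), and a final normalization of $u$ yields $u^2=c_1 u$. The principal obstacle is precisely this last step: tracking how the Bockstein of $v$ multiplies with itself and with pulled-back integral classes across the extension.
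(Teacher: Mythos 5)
The paper offers no proof of this statement: it is quoted from Yasui \cite[Theorem~4.10]{yasui}, so your proposal is an independent reconstruction rather than an alternative to an argument in the text. Your strategy --- fibering $F(\mathbb{C}P^m,2)/\Sigma_2$ over $G_2(\mathbb{C}^{m+1})$ by $\{L_1,L_2\}\mapsto L_1\oplus L_2$ with fibre $F(\mathbb{C}P^1,2)/\Sigma_2\simeq \mathbb{R}P^2$, collapsing the Serre spectral sequence by the parity of the Grassmannian's cohomology, and taking $u=\beta(v)$ --- is sound and is essentially Yasui's own method. Moreover, the step you flag as the ``principal obstacle'' closes cleanly: $u^2-c_1u$ is $2$-torsion (because $2u=0$) and lies in $\ker\bigl(H^4(E;\mathbb{Z})\to H^4(E;\mathbb{Z}/2)\bigr)=2H^4(E;\mathbb{Z})$; any torsion element of $2H^4(E;\mathbb{Z})$ maps to zero in $E_\infty^{2,2}=H^2(G_2(\mathbb{C}^{m+1});\mathbb{Z}/2)$ and hence lies in the free group $H^4(G_2(\mathbb{C}^{m+1});\mathbb{Z})$, so it vanishes. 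No renormalization of $u$ is available or needed, since a $2$-torsion lift of the fibre generator is unique ($p^*H^2$ being torsion-free). (A minor slip: $F(S^2,2)$ is homotopy equivalent to $S^2$, not to the unit tangent bundle $\mathbb{R}P^3$; the correct conclusion $F(S^2,2)/\Sigma_2\simeq\mathbb{R}P^2$ follows from the retraction onto antipodal pairs, as the paper notes before Lemma~\ref{c2pr9}.)

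The one genuine error is the assertion that the tautological bundle $\gamma$ pulls back along $p$ to $\xi^{\mathbb{C}}_{\mathbb{C}P^m,2}$. It does not: the fibre of $p^*\gamma$ over $\{L_1,L_2\}$ is the subspace $L_1\oplus L_2\subset\mathbb{C}^{m+1}$, whereas the fibre of $\xi^{\mathbb{C}}_{\mathbb{C}P^m,2}$ is the free $\mathbb{C}$-module on the two-element set $\{L_1,L_2\}$. Restricted to a fibre of $p$ the former is trivial, while the latter restricts to $\xi^{\mathbb{C}}_{\mathbb{C}P^1,2}$, whose first Chern class generates $H^2(\mathbb{R}P^2;\mathbb{Z})\cong\mathbb{Z}/2$ (the $m=2$ case of the computation in Lemma~\ref{0922-le3}); already for $m=1$ the base is a point and the two bundles differ. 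In fact, since $\xi^{\mathbb{C}}_{M,2}$ is the complexification of $\xi_{M,2}\cong\zeta_{M,2}\oplus\epsilon_{M,2}$ with $\zeta_{M,2}$ a real line bundle, one has $c_1(\xi^{\mathbb{C}}_{\mathbb{C}P^m,2})=\beta(w_1(\xi_{\mathbb{C}P^m,2}))=\beta(v)$, which is exactly your torsion class $u$ and not the infinite-order class $p^*c_1(\gamma)$. You must therefore say which class the symbol $c_1$ in the relation $u^2=c_1u$ denotes. For the $p^*$-module structure (the one actually needed in the proof of Lemma~\ref{1007}, where powers $c_1^t$ of height $h(m)$ are used) it has to be $p^*c_1(\gamma)$; your mod-$2$ comparison still works in that reading because $v^4=e_1v^2$ by Theorem~\ref{thm0419}, and the torsion argument above then gives $u^2=p^*c_1(\gamma)\cdot u$. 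Under the paper's literal gloss $c_1=c_1(\xi^{\mathbb{C}}_{\mathbb{C}P^m,2})=u$, the stated relation is a tautology. Until this identification is fixed, the proof establishes the module structure and $2u=0$ but not, as written, the intended multiplicative relation.
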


In the remaining part of this section, we prove some auxiliary lemmas. 

\begin{lemma}\label{c2pr10}
For $i\geq 1$, the largest integer $\lambda$ such that $\bar w_\lambda(\xi_{\mathbb{R}^2,2^i})\neq 0$ is $\lambda=2^i-1$.
\end{lemma}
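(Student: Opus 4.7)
My plan splits the claim into a vanishing statement ($\bar{w}_\lambda(\xi_{\mathbb{R}^2,2^i})=0$ for $\lambda\geq 2^i$) and a non-vanishing statement ($\bar{w}_{2^i-1}(\xi_{\mathbb{R}^2,2^i})\neq 0$), which I address in that order. The vanishing is purely dimensional: $F(\mathbb{R}^2,2^i)/\Sigma_{2^i}$ is a $K(B_{2^i},1)$ for the braid group on $2^i$ strands, and by Arnold's classical theorem $B_{2^i}$ has cohomological dimension $2^i-1$ (over any coefficients, since $B_{2^i}$ is torsion free). Hence $H^{\lambda}(F(\mathbb{R}^2,2^i)/\Sigma_{2^i};\mathbb{Z}_2)=0$ for every $\lambda\geq 2^i$, and all dual Stiefel-Whitney classes in this range vanish automatically.

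For the non-vanishing, I pull back along the elementary abelian $2$-subgroup $V=(\mathbb{Z}_2)^i\hookrightarrow \Sigma_{2^i}$ coming from the regular representation (which acts freely on $F(\mathbb{R}^2,2^i)$). Let $\pi:F(\mathbb{R}^2,2^i)/V \to F(\mathbb{R}^2,2^i)/\Sigma_{2^i}$ be the induced covering and $h':F(\mathbb{R}^2,2^i)/V\to BV$ the classifying map of the $V$-principal cover $F(\mathbb{R}^2,2^i)\to F(\mathbb{R}^2,2^i)/V$. Under the compatible diagrams of classifying maps, $\pi^*\xi_{\mathbb{R}^2,2^i}$ is pulled back via $h'$ from the bundle on $BV$ associated to the regular representation, which decomposes as $\bigoplus_\chi L_\chi$ over the characters $\chi$ of $V$. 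Therefore $w(\pi^*\xi_{\mathbb{R}^2,2^i})=(h')^*\prod_{\chi\neq 0}(1+\chi)$ inside $\mathbb{Z}_2[t_1,\ldots,t_i]=H^*(BV;\mathbb{Z}_2)$.

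The key computational observation is that the elementary symmetric functions of $\{\chi:\chi\neq 0\}$ are $\mathrm{GL}_i(\mathbb{Z}_2)$-invariant, so they lie in the Dickson subalgebra, whose lowest positive-degree element sits in degree $2^{i-1}$. Writing $\prod_{\chi\neq 0}(1+\chi)=1+u$, the term $u$ is supported in degrees $2^{i-1},\ldots,2^i-1$, and in the inverse $(1+u)^{-1}=\sum_n u^n$ no summand $u^n$ with $n\geq 2$ can contribute in total degree $2^i-1$ (each factor would have degree $\geq 2^{i-1}$, forcing the sum to be $\geq 2^i$). Thus the degree-$(2^i-1)$ piece of $\bar{w}(\pi^*\xi_{\mathbb{R}^2,2^i})$ coincides with that of $w(\pi^*\xi_{\mathbb{R}^2,2^i})$, namely $(h')^*\prod_{\chi\neq 0}\chi$, the pullback of the top Dickson invariant.

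The main obstacle is to verify that $(h')^*$ does not annihilate the top Dickson invariant in $H^{2^i-1}(F(\mathbb{R}^2,2^i)/V;\mathbb{Z}_2)$. The class $\prod_{\chi\neq 0}\chi$ is a product of non-zero linear forms and hence non-zero in $H^{2^i-1}(BV;\mathbb{Z}_2)$, but surviving the pullback is a detection question that I would handle by appeal to Chisholm \cite{chi} (whose argument is precisely this detection for $m=2$, $k=2^i$), or by a direct Serre-spectral-sequence analysis of the Borel fibration $F(\mathbb{R}^2,2^i)\to F(\mathbb{R}^2,2^i)/V\to BV$. Once detection is in place, $\pi^*\bar{w}_{2^i-1}(\xi_{\mathbb{R}^2,2^i})\neq 0$ forces $\bar{w}_{2^i-1}(\xi_{\mathbb{R}^2,2^i})\neq 0$, which combined with the vanishing statement finishes the lemma.
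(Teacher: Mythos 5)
Your proof is correct in outline but takes a genuinely different route from the paper's, whose entire argument is a citation of Cohen--Handel: the paper quotes \cite[Theorem~3.1]{cohen1} for $w_{2^i-1}(\xi_{\mathbb{R}^2,2^i})\neq 0$ together with the identity $\bar w(\xi_{\mathbb{R}^2,2^i})=w(\xi_{\mathbb{R}^2,2^i})$ established there, and deduces the vanishing $\bar w_t(\xi_{\mathbb{R}^2,2^i})=0$ for $t\geq 2^i$ from the explicit $2^i$-regular map $z\mapsto(1,z,\dots,z^{2^i-1})$ of $\mathbb{R}^2$ into $\mathbb{R}^{2^{i+1}-1}$ combined with the obstruction Lemma~\ref{cor1}. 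Your vanishing half, via the $(2^i-1)$-dimensional model for $K(B_{2^i},1)$, is correct and arguably more robust, since it does not depend on knowing an optimal regular map in advance. Your non-vanishing half is a sketch of the Chisholm/Blagojevi\'c--L\"uck--Ziegler detection scheme, and the Dickson-invariant bookkeeping is right: $\prod_{\chi\neq 0}(1+\chi)=1+\sum_j c_{i,j}$ with the $c_{i,j}$ in degrees $2^i-2^j\geq 2^{i-1}$, so $\bar w$ and $w$ agree in degree $2^i-1$ after restriction to $BV$ (a special case of Cohen--Handel's global identity $\bar w=w$, which the paper uses wholesale). However, the entire content of the non-vanishing is concentrated in the detection statement that $(h')^*$ does not annihilate the top Dickson invariant $\prod_{\chi\neq 0}\chi$ in $H^{2^i-1}(F(\mathbb{R}^2,2^i)/V;\mathbb{Z}_2)$, and you only cite this rather than prove it; so both arguments ultimately lean on an external computation at the same critical point. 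What your approach buys is a self-contained reduction and a cleaner vanishing statement, at the cost of heavier machinery than the paper's three-line appeal to \cite{cohen1}.
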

\begin{proof}
By  \cite[Theorem~3.1]{cohen1},  we see that $ w_{2^i-1}(\xi_{\mathbb{R}^2,2^i})\neq 0$. It is also given in \cite{cohen1} that 
$\bar w(\xi_{\mathbb{R}^2,2^i})=w(\xi_{\mathbb{R}^2,2^i})$. Thus we have $\bar w_{2^i-1}(\xi_{\mathbb{R}^2,2^i})\neq 0$. On the other hand, by   \cite[ Example~1.2]{cohen1}, we see that $\bar w_{2^i}(\xi_{\mathbb{R}^2,2^i})= 0$. Consequently,  $\lambda=2^i-1$.
\end{proof}
It follows from a geometric observation that $F(S^m,2)/\Sigma_2$ is homotopy equivalent to $\mathbb{R}P^m$.  Consequently,
\begin{eqnarray}
\label{cohomology2}
H^*(F(S^m,2)/\Sigma_2;\mathbb{Z}_2)&=&\mathbb{Z}_2[u]/(u^{m+1}), \text{\ \ \ \ \  \ \ \  }|u|=1; \\  
\label{cohomology1}
H^*(F(S^m,2)/\Sigma_2;\mathbb{Z})&=&
\left\{
\begin{array}{lcl}
   \mathbb{Z}[x]/(2x, x^{\frac{m+2}{2}}), \text{\   }|x|=2, \text{\ if \ }m \text{\ is even},  \\
\mathbb{Z}[x]/(2x, x^{\frac{m+1}{2}}), \text{\   }|x|=2,  \text{\  if \ }m \text{\ is odd}.
\end{array}
\right.
\end{eqnarray}\begin{lemma}\label{c2pr9}
(a). The largest integer $q$ such that $\bar w_q(S^m)\neq 0$ is $q=0$;
(b). the largest integer $\lambda$ such that $\bar w_\lambda(\xi_{S^m,2})\neq 0$ is $\lambda=m$.
\end{lemma}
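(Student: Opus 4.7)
\smallskip

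\noindent\textbf{Proof plan.}
The plan is to handle part (a) by a standard stable parallelizability argument, and then deduce part (b) as an immediate corollary of Lemma~\ref{lewu} (equivalently Corollary~\ref{c2co3}), with the option of a direct verification via the known cohomology (\ref{cohomology2}) of $F(S^m,2)/\Sigma_2$.

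For part (a), I would recall that the tangent bundle $TS^m$ together with the trivial normal line bundle from the standard embedding $S^m\subset\mathbb{R}^{m+1}$ is trivial. Hence the total Stiefel-Whitney class $w(S^m)=1$, so $\bar w(S^m)=1/w(S^m)=1$. This means $\bar w_0(S^m)=1\ne 0$ while $\bar w_i(S^m)=0$ for all $i\geq 1$, giving $q=0$.

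For part (b), the cleanest route is to apply Corollary~\ref{c2co3} with $M=S^m$ and the value $q=0$ obtained from part (a): this yields $\bar w_{m+0}(\xi_{S^m,2})\ne 0$ and $\bar w_{m+1}(\xi_{S^m,2})=0$, so $\lambda=m$. If a self-contained verification is preferred, one can argue as follows: from the splitting (\ref{0930-6}), $\xi_{S^m,2}\cong\zeta_{S^m,2}\oplus\epsilon_{S^m,2}$ with $\zeta_{S^m,2}$ a line bundle, so $w(\xi_{S^m,2})=1+w_1(\xi_{S^m,2})$ and
\begin{equation*}
\bar w(\xi_{S^m,2})=\bigl(1+w_1(\xi_{S^m,2})\bigr)^{-1}=\sum_{\lambda\geq 0} w_1(\xi_{S^m,2})^{\lambda}
\end{equation*}
in mod $2$ cohomology. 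Since $F(S^m,2)/\Sigma_2\simeq \mathbb{R}P^m$, the ring (\ref{cohomology2}) is $\mathbb{Z}_2[u]/(u^{m+1})$ with $|u|=1$, and Lemma~\ref{0309-l1} forces $w_1(\xi_{S^m,2})=u$; therefore $\bar w_\lambda(\xi_{S^m,2})=u^\lambda$ is nonzero iff $\lambda\leq m$.

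The main (and essentially only) subtlety is confirming that $w_1(\xi_{S^m,2})$ is the generator $u$ of $H^1(\mathbb{R}P^m;\mathbb{Z}_2)$ rather than zero, which is exactly what Lemma~\ref{0309-l1} supplies. Otherwise the argument is purely formal, so I do not expect any obstacle beyond invoking the already-stated results.
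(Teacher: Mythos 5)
Your proposal is correct and follows essentially the same route as the paper: establish $w(S^m)=1$ (the paper cites Milnor--Stasheff, you use stable parallelizability --- equivalent standard facts), conclude $q=0$, and then apply Corollary~\ref{c2co3} to get $\lambda=m$. Your optional self-contained verification via $F(S^m,2)/\Sigma_2\simeq\mathbb{R}P^m$ and Lemma~\ref{0309-l1} is also sound and mirrors the computation the paper carries out in Lemma~\ref{0922-le3}.
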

\begin{proof}
By \cite[Corollary 11.15]{cc} and (\ref{cohomology2}), we obtain $w(S^m)=1$. With the help of Corollary~\ref{c2co3},  we have $\lambda=m$. 
\end{proof}

\begin{lemma}\label{0922-le3}
The largest integer $\tau$ such that $\bar c_\tau(\xi_{S^m,2}^\mathbb{C})\neq 0$ is 
$
\tau=[\frac{m}{2}]
$. 
\end{lemma}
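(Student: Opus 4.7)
The strategy is to use the splitting (\ref{0930-6}) to reduce the problem to a single first Chern class on $F(S^m,2)/\Sigma_2\simeq \mathbb{R}P^m$. Since $W_2^\mathbb{C}=\{(z,-z):z\in\mathbb{C}\}\cong\mathbb{C}$ is a complex line, $\zeta_{S^m,2}^\mathbb{C}$ is a complex line bundle, and (\ref{0930-6}) together with the triviality of $\epsilon_{S^m,2}^\mathbb{C}$ gives
\begin{equation*}
c(\xi_{S^m,2}^\mathbb{C})=1+c_1(\zeta_{S^m,2}^\mathbb{C}),\qquad \bar c_j(\xi_{S^m,2}^\mathbb{C})=(-1)^j c_1(\zeta_{S^m,2}^\mathbb{C})^j.
\end{equation*}
Hence $\tau$ will equal the multiplicative height of $c_1(\zeta_{S^m,2}^\mathbb{C})$ in the integral cohomology ring (\ref{cohomology1}); inspection of that ring shows the generator $x\in H^2$ has height exactly $[m/2]$, so it only remains to verify that $c_1(\zeta_{S^m,2}^\mathbb{C})=x$ (rather than $0$, which is the only other option since $H^2(\mathbb{R}P^m;\mathbb{Z})\cong\mathbb{Z}/2$).

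To establish this, I would observe that $\zeta_{S^m,2}^\mathbb{C}$ is the complexification of the real line bundle $\zeta_{S^m,2}$, as $W_2^\mathbb{C}=W_2\otimes_\mathbb{R}\mathbb{C}$. From Lemma~\ref{0309-l1} together with the real splitting $\xi_{S^m,2}=\zeta_{S^m,2}\oplus\epsilon_{S^m,2}$, one has $w_1(\zeta_{S^m,2})=w_1(\xi_{S^m,2})\neq 0$, so $\zeta_{S^m,2}$ is the nontrivial real line bundle over $\mathbb{R}P^m$. For any real line bundle $L$, the real-vector-bundle isomorphism $L\otimes_\mathbb{R}\mathbb{C}\cong L\oplus L$ gives $w(L\otimes\mathbb{C})=w(L)^2=1+w_1(L)^2$, which, combined with the standard relation $w_{2i}(F_\mathbb{R})\equiv c_i(F)\pmod 2$ for a complex bundle $F$, yields $c_1(L\otimes\mathbb{C})\equiv w_1(L)^2\pmod 2$. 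Applied to $L=\zeta_{S^m,2}$ this shows $c_1(\zeta_{S^m,2}^\mathbb{C})\not\equiv 0\pmod 2$; since the reduction $\mathbb{Z}/2\cong H^2(\mathbb{R}P^m;\mathbb{Z})\to H^2(\mathbb{R}P^m;\mathbb{Z}/2)$ is injective, this forces $c_1(\zeta_{S^m,2}^\mathbb{C})=x$.

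Putting the two steps together, $\bar c_j(\xi_{S^m,2}^\mathbb{C})=\pm x^j$, and by (\ref{cohomology1}) this is nonzero precisely for $0\leq j\leq [m/2]$ (both in the even and odd $m$ cases), giving $\tau=[\tfrac{m}{2}]$. The step I expect to be the main obstacle is the identification $c_1(\zeta_{S^m,2}^\mathbb{C})=x$; a more concrete alternative, if preferred, is to use the $\Sigma_2$-equivariant deformation retraction $F(S^m,2)\to\{(x,-x):x\in S^m\}$ to identify $\zeta_{S^m,2}$ with the tautological real line bundle $\gamma_1$ on $\mathbb{R}P^m$, after which $\zeta_{S^m,2}^\mathbb{C}\cong\gamma_1\otimes\mathbb{C}$ is the pullback along $\mathbb{R}P^m\hookrightarrow\mathbb{C}P^m$ of the tautological complex line bundle, whose first Chern class is visibly the generator of $H^2(\mathbb{R}P^m;\mathbb{Z})$.
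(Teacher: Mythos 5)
Your proposal is correct and follows essentially the same route as the paper: both reduce via the splitting $\xi_{S^m,2}^\mathbb{C}\cong\zeta_{S^m,2}^\mathbb{C}\oplus\epsilon_{S^m,2}^\mathbb{C}$ and the identification $F(S^m,2)/\Sigma_2\simeq\mathbb{R}P^m$ to showing $c_1(\xi_{S^m,2}^\mathbb{C})$ is the unique nonzero element $x$ of $H^2(\mathbb{R}P^m;\mathbb{Z})\cong\mathbb{Z}/2$, using the nontriviality of $w_1(\xi_{S^m,2})$ (Lemma~\ref{0309-l1}) together with the mod~2 computation $w((\xi_{S^m,2}^\mathbb{C})_\mathbb{R})=(1+u)^2=1+u^2$, and then read off the height of $x$ from (\ref{cohomology1}). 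Your mod-2 reduction argument $c_1(L\otimes\mathbb{C})\equiv w_1(L)^2$ is a slightly more direct way of pinning down $c_1$ than the paper's appeal to the equivalence between triviality of a complex line bundle and vanishing of its first Chern class, but the substance is the same.
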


\begin{proof}
Let $(\xi_{S^m,2}^\mathbb{C})_\mathbb{R}$ denote the underlying real vector bundle of $\xi_{S^m,2}^\mathbb{C}$. Then
\begin{eqnarray}\label{0703-5}
(\xi_{S^m,2}^\mathbb{C})_\mathbb{R}\cong(\xi_{S^m,2})^{\oplus 2}.
\end{eqnarray}
Moreover,  
since $u$ is the unique nonzero element in $H^1(F(S^m,2)/\Sigma_2;\mathbb{Z}_2)$,  it follows with the help of Lemma~\ref{0309-l1} that 
\begin{eqnarray}\label{0703-1}
w_1(\xi_{S^m,2})=u.
\end{eqnarray}
It follows from (\ref{0703r}), (\ref{0703-5}) and (\ref{0703-1})  that
\begin{eqnarray}\label{0703-6}
w((\xi_{S^m,2}^\mathbb{C})_\mathbb{R})&=&(w(\xi_{S^m,2}))^2\nonumber \\
&=&(1+u)^2\nonumber\\
&=&1+u^2.
\end{eqnarray}
Since $m\geq 2$, by (\ref{cohomology2}), $u^2\neq 0$. Consequently, (\ref{0703-6}) implies $w((\xi_{S^m,2}^\mathbb{C})_\mathbb{R})\neq 1$.
Hence the underlying real vector bundle of $\xi_{S^m,2}^\mathbb{C}$ is not trivial and it follows that  $\xi_{S^m,2}^\mathbb{C}$ is not trivial as a complex vector bundle. With the help of (\ref{0930-6}), we see that as a complex line bundle, $\zeta_{S^m,2}^\mathbb{C}$ is not trivial. Since the triviality of a complex line bundle is equivalent to the vanishness of its first Chern class,  we have
\begin{eqnarray}\label{0703-11}
c_1(\xi_{S^m,2}^\mathbb{C})&=&c_1(\zeta_{S^m,2}^\mathbb{C})\nonumber\\
&\neq& 0.
\end{eqnarray}
Since $x$ is the unique nontrivial element in $H^2(F(S^m,2)/\Sigma_2;\mathbb{Z})$, it follows from (\ref{0703-11}) that
\begin{eqnarray}\label{0703-7}
c_1(\xi_{S^m,2}^\mathbb{C})=x.
\end{eqnarray}
From (\ref{0703c}) and (\ref{0703-7}) we obtain
\begin{eqnarray}\label{0703-9}
\bar c_t( \xi_{S^m,2}^\mathbb{C})=(-1)^t x^t.
\end{eqnarray}
By applying (\ref{cohomology1}) to (\ref{0703-9}), we finish the proof.
\end{proof}

\begin{lemma}\label{c2pr11}
For $2^j\leq m<2^{j+1}$ and $j\geq 1$, 
(a). the largest integer $q$ such that $\bar w_q(\mathbb{R}P^m)\neq 0$ is $q=2^{j+1}-m-1$;
(b). the largest integer $\lambda$ such that $\bar w_\lambda(\xi_{\mathbb{R}P^m,2})\neq 0$ is $\lambda=2^{j+1}-1$.
\end{lemma}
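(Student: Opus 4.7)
The plan is to reduce both parts to a standard computation in $H^*(\mathbb{R}P^m;\mathbb{Z}_2)$ and then invoke Corollary~\ref{c2co3} to pass from the tangent bundle to $\xi_{\mathbb{R}P^m,2}$.

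For part (a), I would start from the classical formula $w(\mathbb{R}P^m)=(1+a)^{m+1}$, where $a$ generates $H^*(\mathbb{R}P^m;\mathbb{Z}_2)=\mathbb{Z}_2[a]/(a^{m+1})$. Since $2^j\leq m<2^{j+1}$, we have $m+1\leq 2^{j+1}$, so $a^{2^{j+1}}=0$ and hence
\begin{equation*}
(1+a)^{2^{j+1}}=1+a^{2^{j+1}}=1\in H^*(\mathbb{R}P^m;\mathbb{Z}_2).
\end{equation*}
Therefore
\begin{equation*}
\bar w(\mathbb{R}P^m)=(1+a)^{-(m+1)}=(1+a)^{2^{j+1}-m-1}.
\end{equation*}
The top-degree term of this polynomial is $a^{2^{j+1}-m-1}$, and the inequality $2^j\leq m$ forces $2^{j+1}-m-1\leq 2^j-1\leq m$, so $a^{2^{j+1}-m-1}\neq 0$. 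This shows $\bar w_{2^{j+1}-m-1}(\mathbb{R}P^m)\neq 0$, while degrees larger than $2^{j+1}-m-1$ do not appear in the expansion, so $\bar w_{q'}(\mathbb{R}P^m)=0$ for all $q'>2^{j+1}-m-1$. This gives $q=2^{j+1}-m-1$.

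For part (b), I would apply Corollary~\ref{c2co3} with $M=\mathbb{R}P^m$. That corollary states that if $q$ is the largest integer with $\bar w_q(M)\neq 0$, then $\bar w_{m+q}(\xi_{M,2})\neq 0$ and $\bar w_{m+q+1}(\xi_{M,2})=0$. Substituting $q=2^{j+1}-m-1$ from part (a) yields $m+q=2^{j+1}-1$, so the largest $\lambda$ with $\bar w_\lambda(\xi_{\mathbb{R}P^m,2})\neq 0$ is $\lambda=2^{j+1}-1$, as claimed.

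No step here is genuinely hard: part (a) is a routine application of the Frobenius identity $(1+a)^{2^{j+1}}=1+a^{2^{j+1}}$ combined with the range $2^j\leq m<2^{j+1}$, and part (b) is a direct substitution into the already-proved Corollary~\ref{c2co3}. The only subtlety to watch is to ensure that the top-degree monomial $a^{2^{j+1}-m-1}$ survives in $H^*(\mathbb{R}P^m;\mathbb{Z}_2)$, which is exactly where the hypothesis $m\geq 2^j$ enters; the complementary bound $m<2^{j+1}$ is what makes $(1+a)^{2^{j+1}}=1$ hold in this cohomology ring.
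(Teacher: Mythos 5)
Your proof is correct, and part (a) takes a genuinely different (and cleaner) route than the paper. The paper expands $\bar w(\mathbb{R}P^m)=(1+\alpha)^{-(m+1)}$ as a formal power series with coefficients $\binom{-(m+1)}{i}=(-1)^i\binom{m+i}{m}$ and then invokes Lucas' theorem to identify the largest $i$ with $\binom{m+i}{m}$ odd, namely $i=2^{j+1}-m-1$. You instead observe that $(1+a)^{2^{j+1}}=1+a^{2^{j+1}}=1$ in $H^*(\mathbb{R}P^m;\mathbb{Z}_2)$ because $m+1\leq 2^{j+1}$, so the inverse of $(1+a)^{m+1}$ is literally the polynomial $(1+a)^{2^{j+1}-m-1}$, whose top monomial $a^{2^{j+1}-m-1}$ survives precisely because $2^j\leq m$. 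This avoids Lucas' theorem entirely and makes both the nonvanishing in degree $2^{j+1}-m-1$ and the vanishing above it immediate, since the inverse is exhibited as an explicit polynomial of that degree; the paper's binomial-coefficient computation is more flexible (it applies verbatim, with degree doubled or quadrupled, to $\mathbb{C}P^m$ and $\mathbb{H}P^m$ in Lemmas~\ref{c2pr12} and~\ref{c2pr20}, which is presumably why the author set it up that way), but your argument also adapts to those cases with the obvious changes. Part (b) is identical to the paper: both substitute $q=2^{j+1}-m-1$ into Corollary~\ref{c2co3}.
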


\begin{proof}
We notice that 
\begin{eqnarray}\label{0930-r1}
H^*(\mathbb{R}P^m;\mathbb{Z}_2)=\mathbb{Z}_2[\alpha]/(\alpha^{m+1})
\end{eqnarray}
where $\alpha$ is a generator of degree $1$. Hence by applying (\ref{0930-r1}) to \cite[Corollary 11.15]{cc}, 
\begin{eqnarray}\label{0930-r2}
w(\mathbb{R}P^m)=(1+\alpha)^{m+1}.
\end{eqnarray}
It follows from (\ref{0930-r2}) that 
\begin{eqnarray}\label{c2eqwr}
\bar w(\mathbb{R}P^m)&=&(1+\alpha)^{-(m+1)}\nonumber\\
&=&1+\sum_{i=1}^{\infty} {{-(m+1)}\choose {i}}\alpha^i\nonumber\\
&=&1+\sum_{i=1}^m(-1)^i{{m+i}\choose {m}}\alpha^i.
\end{eqnarray}
On the other hand, let the dyadic expansions of $m+i$ and $m$ be
\begin{eqnarray*}
m+i&=&(a_t,a_{t-1},\cdots,a_1,a_0)_2,\\
m&=&(b_t,b_{t-1},\cdots,b_1,b_0)_2
\end{eqnarray*}
such that $a_t=1$ and $a_i,b_i\in \{0,1\}$ for all $i=0,1,\cdots,t$. 
By the Lucas Theorem (cf. \cite{lucas}), ${{m+i}\choose {m}}$ is odd if and only if  for each $i=0,1,\cdots,t$, $a_i\geq b_i$.  Hence for $2^j\leq m<2^j$, the largest integer $i$  such that ${{m+i}\choose {m}}$ is odd is $i=2^{j+1}-m-1$. Consequently, it follows from (\ref{c2eqwr}) that the largest integer $q$ such that $\bar w_q (\mathbb{R}P^m)\neq 0$ is  $q=2^{j+1}-m-1$. Hence we obtain (a). 
By applying Corollary~\ref{c2co3}, we obtain (b).
\end{proof}

The following two lemmas could be proved analogously with Lemma~\ref{c2pr11}.  

\begin{lemma}\label{c2pr12}
For $2^j\leq m<2^{j+1}$ and $j\geq 1$, 
(a). the largest integer $q$ such that $\bar w_q(\mathbb{C}P^m)\neq 0$ is $q=2^{j+2}-2m-2$;
(b). the largest integer $\lambda$ such that $\bar w_\lambda(\xi_{\mathbb{C}P^m,2})\neq 0$ is $\lambda=2^{j+2}-2$.
\end{lemma}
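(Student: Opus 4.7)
The plan is to mirror the argument for Lemma~\ref{c2pr11}, with the only essential difference being that the mod~$2$ cohomology of $\mathbb{C}P^m$ is a truncated polynomial ring on a generator of degree~$2$ (rather than degree~$1$), so every exponent in the computation of $\bar w(\mathbb{C}P^m)$ contributes twice as much to the total degree.

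First I would record the standard facts: $H^*(\mathbb{C}P^m;\mathbb{Z}_2)=\mathbb{Z}_2[\beta]/(\beta^{m+1})$ with $|\beta|=2$, and, from the splitting $T\mathbb{C}P^m\oplus\epsilon\cong(\gamma^*)^{\oplus(m+1)}$ of the tangent bundle (together with \cite[Corollary~11.15]{cc}), the total Stiefel--Whitney class is $w(\mathbb{C}P^m)=(1+\beta)^{m+1}$. Inverting this gives
\begin{eqnarray*}
\bar w(\mathbb{C}P^m)=(1+\beta)^{-(m+1)}=\sum_{i\geq 0}(-1)^i\binom{m+i}{m}\beta^i,
\end{eqnarray*}
which, in $\mathbb{Z}_2$-coefficients and using $\beta^{m+1}=0$, becomes $\sum_{i=0}^{m}\binom{m+i}{m}\beta^i\pmod{2}$.

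Next I would repeat the Lucas-theorem analysis used in the proof of Lemma~\ref{c2pr11}: for $2^j\leq m<2^{j+1}$, the binomial coefficient $\binom{m+i}{m}$ is odd precisely when every binary digit of $m+i$ dominates the corresponding digit of $m$, and the largest $i\leq m$ with this property is $i=2^{j+1}-m-1$ (note that this is $\leq m$ because $m\geq 2^j$). Since each $\beta^i$ lives in degree $2i$, the largest $q$ with $\bar w_q(\mathbb{C}P^m)\neq 0$ is
\begin{eqnarray*}
q=2(2^{j+1}-m-1)=2^{j+2}-2m-2,
\end{eqnarray*}
which proves (a). For (b), I would apply Corollary~\ref{c2co3}: the real dimension of $\mathbb{C}P^m$ is $2m$, so the largest $\lambda$ with $\bar w_\lambda(\xi_{\mathbb{C}P^m,2})\neq 0$ is
\begin{eqnarray*}
\lambda=2m+q=2m+(2^{j+2}-2m-2)=2^{j+2}-2.
\end{eqnarray*}

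There is no genuine obstacle here; the proof is essentially bookkeeping once one sees that the doubling of $|\beta|$ accounts precisely for the factor of~$2$ in the exponent $2^{j+2}$ as opposed to $2^{j+1}$ in Lemma~\ref{c2pr11}, and that the cancellation of $2m$ between the contribution of $q$ and the dimension of $\mathbb{C}P^m$ in (b) yields the clean expression $2^{j+2}-2$. The only step that might merit care is verifying that the largest exponent $i=2^{j+1}-m-1$ produced by Lucas' theorem does not exceed $m$, so that the corresponding $\beta^i$ survives in the truncated ring; this is immediate since $2^{j+1}-m-1\leq 2^{j+1}-2^j-1=2^j-1<m$.
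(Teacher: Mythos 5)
Your proposal is correct and follows essentially the same route as the paper: the paper's proof likewise records $H^*(\mathbb{C}P^m;\mathbb{Z}_2)=\mathbb{Z}_2[\beta]/(\beta^{m+1})$ with $|\beta|=2$, repeats verbatim the Lucas-theorem computation from the proof of Lemma~\ref{c2pr11} to get $q=2^{j+2}-2m-2$, and then invokes Corollary~\ref{c2co3} for part (b). Your added check that $2^{j+1}-m-1\leq m$ is a small point the paper leaves implicit.
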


\begin{proof}
We note that $$H^*(\mathbb{C}P^m;\mathbb{Z}_2)=\mathbb{Z}_2[\beta]/(\beta^{m+1})$$
where $\beta$ is a generator of degree $2$.  Applying the same argument as in the proof of Lemma~\ref{c2pr11}, we see that the largest $i$ such that $\bar w_i (\mathbb{C}P^m)\neq 0$ is  $i=2^{j+2}-2m-2$.  With the help of Corollary~\ref{c2co3}, we finish the proof.
\end{proof}

\begin{lemma}\label{c2pr20}
For $2^j\leq m<2^{j+1}$ and $j\geq 1$, 
(a). the largest integer $q$ such that $\bar w_q(\mathbb{H}P^m)\neq 0$ is $q=2^{j+3}-4m-4$;
(b). the largest integer $\lambda$ such that $\bar w_\lambda(\xi_{\mathbb{H}P^m,2})\neq 0$ is $\lambda=2^{j+3}-4$.
\end{lemma}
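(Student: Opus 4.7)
The plan is to follow the template of Lemma~\ref{c2pr11} and Lemma~\ref{c2pr12}: reduce both parts to a Lucas-type calculation in $H^*(\mathbb{H}P^m;\mathbb{Z}_2) = \mathbb{Z}_2[\gamma]/(\gamma^{m+1})$ with $|\gamma|=4$, and then invoke Corollary~\ref{c2co3}.

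The key input is the total mod-$2$ Stiefel-Whitney class of $\mathbb{H}P^m$. Let $H$ be the tautological quaternionic line bundle over $\mathbb{H}P^m$ and let $\mathrm{ad}(H)$ denote the rank-$3$ real subbundle of $\mathrm{End}_{\mathbb{H}}(H)$ consisting of imaginary quaternions. From the splitting $\mathrm{Hom}_{\mathbb{H}}(H,\mathbb{H}^{m+1}) = \mathrm{End}_{\mathbb{H}}(H) \oplus \mathrm{Hom}_{\mathbb{H}}(H,H^\perp)$ one obtains an isomorphism of real vector bundles
$$T\mathbb{H}P^m \oplus \mathrm{ad}(H) \oplus \epsilon \cong (m+1)H^*.$$
Since $H^*(\mathbb{H}P^m;\mathbb{Z}_2)$ vanishes in every positive degree less than $4$, any real vector bundle of rank $<4$ over $\mathbb{H}P^m$ has trivial total Stiefel-Whitney class; in particular $w(\mathrm{ad}(H))=1$. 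Combined with $w(H^*) = 1+\gamma$ (the only possible nonzero class of the rank-$4$ bundle $H^*$ sits in degree $4$ and is detected by its Euler class), this yields
$$w(\mathbb{H}P^m) = (1+\gamma)^{m+1}.$$

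Inverting this gives
$$\bar w(\mathbb{H}P^m) = 1 + \sum_{i=1}^{m} (-1)^i \binom{m+i}{m}\gamma^i.$$
By Lucas' theorem, exactly as in the proof of Lemma~\ref{c2pr11}, for $2^j \leq m < 2^{j+1}$ the largest $i \leq m$ such that $\binom{m+i}{m}$ is odd is $i = 2^{j+1}-m-1$. Since $|\gamma|=4$, the largest degree $q$ with $\bar w_q(\mathbb{H}P^m) \neq 0$ is $q = 4(2^{j+1}-m-1) = 2^{j+3}-4m-4$, which proves (a). Then (b) is immediate from Corollary~\ref{c2co3} applied to the $4m$-dimensional manifold $\mathbb{H}P^m$: $\lambda = 4m + q = 2^{j+3}-4$.

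The main obstacle is the first step. For $\mathbb{R}P^m$ and $\mathbb{C}P^m$ the formula $w = (1+\alpha)^{m+1}$ respectively $(1+\beta)^{m+1}$ falls out of a clean tangent splitting of the form $T \oplus \epsilon \cong (m+1)\gamma^*$, whereas for $\mathbb{H}P^m$ one must account for the nontrivial bundle $\mathrm{ad}(H)$. The saving observation is that the mod-$2$ cohomology of $\mathbb{H}P^m$ is concentrated in degrees divisible by $4$, which forces $w(\mathrm{ad}(H))=1$ on purely dimensional grounds and reduces the computation to the same Lucas calculation as in the lower-dimensional cases.
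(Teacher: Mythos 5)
Your proof is correct and follows essentially the same route as the paper: the paper's own proof simply states $H^*(\mathbb{H}P^m;\mathbb{Z}_2)=\mathbb{Z}_2[\delta]/(\delta^{m+1})$ with $|\delta|=4$ and invokes ``the same argument as in Lemma~\ref{c2pr11}'' (the Lucas computation for $(1+\delta)^{-(m+1)}$) followed by Corollary~\ref{c2co3}, exactly as you do. Your explicit justification of $w(\mathbb{H}P^m)=(1+\gamma)^{m+1}$ via the splitting $T\mathbb{H}P^m\oplus\mathrm{ad}(H)\oplus\epsilon\cong(m+1)H^*$ and the vanishing of $H^i$ for $0<i<4$ is a correct filling-in of a step the paper leaves implicit (it cites Milnor--Stasheff for the analogous fact in the $\mathbb{R}P^m$ case), not a different method.
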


\begin{proof}
We note that $$H^*(\mathbb{H}P^m;\mathbb{Z}_2)=\mathbb{Z}_2[\delta]/(\delta^{m+1})$$
where $\delta$ is a generator of degree $4$.  Applying the same argument as in the proof of Lemma~\ref{c2pr11}, we see that the largest $i$ such that $\bar w_i (\mathbb{H}P^m)\neq 0$ is  $i=2^{j+3}-4m-4$.  With the help of Corollary~\ref{c2co3}, we finish the proof.
\end{proof}

\begin{lemma}\label{1007}
Let $m\geq 4$ and $\kappa$ be the
 largest integer  such that $\bar c_{\kappa}(\xi_{\mathbb{C}P^m,2}^\mathbb{C})\neq 0$.  Then
\begin{eqnarray}\label{27}
\kappa\geq 2m-2. 
\end{eqnarray}
\end{lemma}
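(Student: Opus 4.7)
Since $c_2(\xi_{\mathbb{C}P^m,2}^{\mathbb{C}})=0$ by~(\ref{0703c}), the total dual Chern class is $\bar{c}(\xi_{\mathbb{C}P^m,2}^{\mathbb{C}}) = (1+c_1)^{-1} = \sum_{t\geq 0}(-1)^t c_1^t$, where $c_1:=c_1(\xi_{\mathbb{C}P^m,2}^{\mathbb{C}})$. Thus $\bar{c}_t = (-1)^t c_1^t$, and proving $\kappa\geq 2m-2$ reduces to showing $c_1^{2m-2}\neq 0$ in $H^*(F(\mathbb{C}P^m,2)/\Sigma_2;\mathbb{Z})$.

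My plan is to reduce modulo $2$. By~(\ref{0930-6}) we have $\xi_{\mathbb{C}P^m,2}^{\mathbb{C}} = \zeta_{\mathbb{C}P^m,2}^{\mathbb{C}} \oplus \epsilon_{\mathbb{C}P^m,2}^{\mathbb{C}}$, and $\zeta_{\mathbb{C}P^m,2}^{\mathbb{C}}$ is the complexification of the real sign line bundle associated with the double cover $F(\mathbb{C}P^m,2)\to F(\mathbb{C}P^m,2)/\Sigma_2$. Hence $c_1$ is the integral Bockstein of $v:=w_1(\xi_{\mathbb{C}P^m,2})$; in particular $c_1$ is $2$-torsion and satisfies $c_1 \equiv \mathrm{Sq}^1(v) = v^2 \pmod{2}$, so $c_1^{2m-2} \equiv v^{4m-4} \pmod{2}$. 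From the module structure furnished by Theorem~\ref{thm0503} one checks that $H^*(F(\mathbb{C}P^m,2)/\Sigma_2;\mathbb{Z})$ carries only $\mathbb{Z}/2$-torsion (no $\mathbb{Z}/4$), so mod-$2$ reduction is injective on $2$-torsion classes. Consequently $c_1^{2m-2}\neq 0$ if and only if $v^{4m-4}\neq 0$ in $H^*(F(\mathbb{C}P^m,2)/\Sigma_2;\mathbb{F}_2)$.

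For the final step I would apply Theorem~\ref{thm0419}: the mod-$2$ cohomology is a free $H^*(G_2(\mathbb{C}^{m+1});\mathbb{F}_2)$-module with basis $\{1,v,v^2\}$ and ring relation $v^3 = e_1 v$, where $e_1 \in H^2(G_2(\mathbb{C}^{m+1});\mathbb{F}_2) = \mathbb{F}_2\cdot c_1^{\mathrm{univ}}$. The identification $e_1 = c_1^{\mathrm{univ}}$ is forced: if $e_1=0$, then $v^3=0$, which conflicts with a direct Poincar\'e series count for $F(\mathbb{C}P^m,2)/\Sigma_2$. Iterating the relation gives $v^{4m-4} = (c_1^{\mathrm{univ}})^{2m-3}v^2$, and by freeness of the module this is non-zero if and only if $(c_1^{\mathrm{univ}})^{2m-3}\neq 0$ in $H^*(G_2(\mathbb{C}^{m+1});\mathbb{F}_2)$. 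The latter follows from Proposition~\ref{schubert2}, since the height of $c_1^{\mathrm{univ}}$ is $2m-2$ integrally, hence also mod~$2$ by torsion-freeness of $H^*(G_2(\mathbb{C}^{m+1});\mathbb{Z})$, and $2m-3\leq 2m-2$. The main obstacle will be justifying the identification $e_1 = c_1^{\mathrm{univ}}$ in Theorem~\ref{thm0419} and the ``no higher $2$-torsion'' claim for the integer cohomology; the hypothesis $m\geq 4$ comfortably ensures the arithmetic (in particular $2m-3\geq 5$) needed for the iteration of $v^3=e_1v$ and the Poincar\'e series argument to behave uniformly.
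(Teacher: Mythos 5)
Your proposal takes a genuinely different route from the paper: the paper works integrally, uses Theorem~\ref{thm0503} to write $c_1(\xi_{\mathbb{C}P^m,2}^{\mathbb{C}})=au+bc_1$, shows $(a,b)\neq(0,0)$ from non-triviality of the line bundle, and treats the two cases $b\neq 0$ and $b=0$ separately, detecting $\bar c_t$ on the $u$-component via the integral height from Proposition~\ref{schubert2}. Your preliminary observations are correct and in fact sharper than what the paper uses: $\zeta_{\mathbb{C}P^m,2}^{\mathbb{C}}$ is indeed the complexification of the real sign line bundle, so $c_1(\xi_{\mathbb{C}P^m,2}^{\mathbb{C}})=\beta(v)$ is $2$-torsion with mod-$2$ reduction $v^2$. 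The fatal gap is in the last step: the mod-$2$ height of $c_1$ in $H^*(G_2(\mathbb{C}^{m+1});\mathbb{Z}_2)$ is \emph{not} $2m-2$. Torsion-freeness of $H^*(G_2(\mathbb{C}^{m+1});\mathbb{Z})$ does not prevent a power of $c_1$ from being divisible by $2$; already in $G_2(\mathbb{C}^4)$ Pieri's rule gives that $c_1^3$ is twice a generator of $H^6(G_2(\mathbb{C}^4);\mathbb{Z})\cong\mathbb{Z}$, so $c_1^3\equiv 0\pmod 2$ while $c_1^3\neq 0$ integrally. In general the mod-$2$ height of $c_1$ is $2^{[\log_2 m]+1}-2$, which is strictly smaller than $2m-2$ unless $m$ is a power of $2$.

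The consequence is that your detection class vanishes: for $m$ not a power of $2$ one has $4m-4>2^{[\log_2 m]+2}-2$, and by the paper's own Lemma~\ref{c2pr12}(b) (equivalently Lemma~\ref{lewu}) the largest nonzero power of $v$ is $2^{[\log_2 m]+2}-2$, so $v^{4m-4}=0$. Hence the mod-$2$ reduction of $c_1(\xi^{\mathbb{C}}_{\mathbb{C}P^m,2})^{2m-2}$ is zero and cannot certify $\kappa\geq 2m-2$; your argument only survives when $m$ is a power of $2$. Worse, if one also accepts your (individually plausible) claims that this class is $2$-torsion and that mod-$2$ reduction is injective on $2$-torsion, the chain of reasoning would yield $c_1(\xi^{\mathbb{C}})^{2m-2}=0$, i.e.\ the \emph{negation} of the lemma for such $m$ --- so at minimum the proposal cannot be repaired within the mod-$2$ framework without abandoning one of those claims. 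A secondary gap: the identification $e_1=c_1^{\mathrm{univ}}$ cannot be extracted from a Poincar\'e series count, since the additive structure furnished by the free module basis $\{1,v,v^2\}$ in Theorem~\ref{thm0419} is the same whether $e_1=0$ or not; the correct way to rule out $e_1=0$ is again Lemma~\ref{c2pr12}, since $e_1=0$ would force $v^3=0$.
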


\begin{proof}
By Theorem~\ref{thm0503} and (\ref{0930-6}) we obtain 
\begin{eqnarray}\label{20}
c_1(\zeta_{\mathbb{C}P^m,2}^\mathbb{C})=c_1(\xi_{\mathbb{C}P^m,2}^\mathbb{C})=au+bc_1
\end{eqnarray}
with $a,b\in\mathbb{Z}$ and $u^2=c_1u$.  
And from (\ref{0703c}) and (\ref{20}) we obtain
\begin{eqnarray}\label{21}
\bar c_t(\xi_{\mathbb{C}P^m,2}^{\mathbb{C}})&=&(-1)^t(au+bc_1)^t\nonumber\\
&=&(-1)^t(\sum_{i=1}^ta^ib^{t-i})c_1^{t-1}u+(-1)^tb^tc_1^t.
\end{eqnarray}

On the other hand,  let $(\xi_{\mathbb{C}P^m,2}^\mathbb{C})_\mathbb{R}$ denote the underlying real vector bundle of $\xi_{\mathbb{C}P^m,2}^\mathbb{C}$. Then
\begin{eqnarray}\label{0703-51}
(\xi_{\mathbb{C}P^m,2}^\mathbb{C})_\mathbb{R}\cong(\xi_{\mathbb{C}P^m,2})^{\oplus 2}.
\end{eqnarray}
Let 
$v=w_1(\xi_{\mathbb{C}P^m,2})$.
It follows from (\ref{0703r}) and (\ref{0703-51})  that 
\begin{eqnarray}\label{23}
w((\xi_{\mathbb{C}P^m,2}^\mathbb{C})_\mathbb{R})&=&(w(\xi_{\mathbb{C}P^m,2}))^2\nonumber\\
&=&(1+v)^2\nonumber\\
&=&1+v^2.
\end{eqnarray}
By Theorem~\ref{thm0419}, $v^2\neq 0$. Hence from (\ref{23}), $w((\xi_{\mathbb{C}P^m,2}^\mathbb{C})_\mathbb{R})\neq 1$. Hence $(\xi_{\mathbb{C}P^m,2}^\mathbb{C})_\mathbb{R}$ is not  trivial. Consequently, as a complex vector bundle, $\xi_{\mathbb{C}P^m,2}^\mathbb{C}$ is not trivial. It follows that  $\zeta_{\mathbb{C}P^m,2}^\mathbb{C}$ is not trivial as a complex line bundle. 
Therefore,  we see that  the first Chern class of $\zeta_{\mathbb{C}P^m,2}^\mathbb{C}$ is nonzero, i.e.   in (\ref{20}), either $a\neq 0$ or $b\neq 0$.

\noindent{\sc{Case~1}}: $b\neq 0$. 

\noindent
Then by (\ref{21}), we have $\bar c_t(\xi_{\mathbb{C}P^m,2}^{\mathbb{C}})\neq 0$ whenever $c_1^t\neq 0$. Noting that $c_1^t=0$ if and only if $t\geq h(m)+1$, we conclude that
\begin{eqnarray}\label{24}
\kappa\geq h(m). 
\end{eqnarray}

\noindent{\sc{Case~2}}: $b=0$.

\noindent Then $a\neq 0$ and by (\ref{21}),
\begin{eqnarray}\label{26}
\bar c_t(\xi_{\mathbb{C}P^m,2}^{\mathbb{C}})&=&(-1)^ta^tu^t\nonumber\\
&=&(-1)^ta^tc_1^{t-1}u.
\end{eqnarray}
We note that $c_1^{t-1}u= 0$ if and only if $t-1\geq h(m)+1$. Hence by (\ref{26}), 
\begin{eqnarray}\label{25}
\kappa=h(m)+1. 
\end{eqnarray}
Summarizing both {\sc{Case~1}} and {\sc{Case~2}}, it follows from (\ref{24}) and (\ref{25}) that $\kappa(m)\geq h(m) $. With the help of Proposition~\ref{schubert2} we obtain (\ref{27}).
\end{proof}

\section{Proof of Theorem~\ref{0921-3}}\label{sec5}

\begin{proof}[Proof of Theorem~\ref{0921-3}]  
Firstly, we have
\begin{eqnarray*}
& &w(\prod_{i=1}^{k_1}S^{m_{1,i}}\times \prod_{j=1}^{k_2}\mathbb{R}P^{m_{2,j}} \times \prod_{t=1}^{k_3}\mathbb{C}P^{m_{3,t}}\times \prod_{l=1}^{k_4}\mathbb{H}P^{m_{4,l}})\\
&=&\prod_{i=1}^{k_1}w(S^{m_{1,i}})\times \prod_{j=1}^{k_2}w(\mathbb{R}P^{m_{2,j}} )\times \prod_{t=1}^{k_3}w(\mathbb{C}P^{m_{3,t}})\times \prod_{l=1}^{k_4}w(\mathbb{H}P^{m_{4,l}}).
\end{eqnarray*}
It follows that 
\begin{eqnarray}\label{eq0921-8}
& &\bar w(\prod_{i=1}^{k_1}S^{m_{1,i}}\times \prod_{j=1}^{k_2}\mathbb{R}P^{m_{2,j}} \times \prod_{t=1}^{k_3}\mathbb{C}P^{m_{3,t}}\times \prod_{l=1}^{k_4}\mathbb{H}P^{m_{4,l}})\nonumber\\
&=& \prod_{i=1}^{k_1}\bar w(S^{m_{1,i}})\times \prod_{j=1}^{k_2}\bar w(\mathbb{R}P^{m_{2,j}} )\times\prod_{t=1}^{k_3}\bar w(\mathbb{C}P^{m_{3,t}})\nonumber\\
&&  \times \prod_{l=1}^{k_4}\bar w(\mathbb{H}P^{m_{4,l}}).
\end{eqnarray}  

Secondly, it follows from (\ref{eq0921-8}) and Lemma~\ref{c2pr9} - Lemma~\ref{c2pr20} that
the largest integer $q$ such that 
$$
\bar w_q(\prod_{i=1}^{k_1}S^{m_{1,i}}\times \prod_{j=1}^{k_2}\mathbb{R}P^{m_{2,j}} \times \prod_{t=1}^{k_3}\mathbb{C}P^{m_{3,t}}\times \prod_{l=1}^{k_4}\mathbb{H}P^{m_{4,l}})\neq 0
$$
is 
\begin{eqnarray}
q&=&\sum_{j=1}^{k_2}(2^{[\log_2 m_{2,j}]+1}-m_{2,j}-1)+\sum_{t=1}^{k_3}(2^{[\log_2 m_{3,t}]+2}-2m_{3,t}-2 )+
\nonumber\\&&\sum_{l=1}^{k_4}(2^{[\log_2 m_{4,l}]+3}-4m_{4,l}-4 )\nonumber\\
&=&\sum_{j=1}^{k_2}(2^{[\log_2 m_{2,j}]+1}-m_{2,j})+\sum_{t=1}^{k_3}(2^{[\log_2 m_{3,t}]+2}-2m_{3,t} )\nonumber\\&&+\sum_{l=1}^{k_4}(2^{[\log_2 m_{4,l}]+3}-4m_{4,l})
-k_2-2k_3-4k_4.\label{0930-1}
\end{eqnarray}

Since $\prod_{i=1}^{k_1}S^{m_{1,i}}\times \prod_{j=1}^{k_2}\mathbb{R}P^{m_{2,j}} \times \prod_{t=1}^{k_3}\mathbb{C}P^{m_{3,t}}\times \prod_{l=1}^{k_4}\mathbb{H}P^{m_{4,l}}$ is a closed and connected manifold, by (\ref{0930-1}) and Corollary~\ref{c2co3}, the largest integer $\lambda$ such that 
$$
\bar w_\lambda(\xi_{\prod_{i=1}^{k_1}S^{m_{1,i}}\times \prod_{j=1}^{k_2}\mathbb{R}P^{m_{2,j}} \times \prod_{t=1}^{k_3}\mathbb{C}P^{m_{3,t}}\times \prod_{l=1}^{k_4}\mathbb{H}P^{m_{4,l}},2})\neq 0
$$
is 
\begin{eqnarray}\label{0930-2}
\lambda&=&q+\dim (\prod_{i=1}^{k_1}S^{m_{1,i}}\times \prod_{j=1}^{k_2}\mathbb{R}P^{m_{2,j}} \times \prod_{t=1}^{k_3}\mathbb{C}P^{m_{3,t}}\times \prod_{l=1}^{k_4}\mathbb{H}P^{m_{4,l}})\nonumber\\
&=&\sum_{i=1}^{k_1} m_{1,i}+\sum_{j=1}^{k_2}2^{[\log_2 m_{2,j}]+1}+\sum_{t=1}^{k_3}2^{[\log_2 m_{3,t}]+2}\nonumber\\&& +\sum_{l=1}^{k_4}2^{[\log_2 m_{4,l}]+3}-k_2-2k_3-4k_4.
\end{eqnarray}

Finally, from Lemma~\ref{cor1} and (\ref{0930-2}), we obtain Theorem~\ref{0921-3}. 
\end{proof}

Corollary~\ref{rpm} is obtained from Theorem~\ref{0921-3} immediately. 
Corollary~\ref{sphere} is obtained from  \cite[Example~2.6-(2)]{high1}  and Theorem~\ref{0921-3}.  And Corollary~\ref{exactrpm} is obtained from  Example~\ref{upperrpm} and Theorem~\ref{0921-3}.

\section{An obstruction for regular maps on disjoint unions}\label{6.2}

The main purpose of this section is to give an obstruction for the regular maps on disjoint unions of manifolds defined in Definition~\ref{c2def1}.  As a by-product, we re-obtain \cite[Theorem 2.4]{handel2} in Remark~\ref{1007-r2}. 
Throughout this section, all Chern classes are with mod $p$ coefficients for some primes $p$.  

\smallskip

We consider the following canonical projections
\begin{eqnarray*}
\pi_i: \prod _{i=1}^n(F(M_i,k_i)/\Sigma_i)\longrightarrow F(M_i,k_i)/\Sigma_i. 
\end{eqnarray*}
By applying the Kunneth formula, we see that for each $i=1,2,\cdots,n$, the map $\pi_i$ induces a monomorphism on cohomology
$$
\xymatrix{
\pi_i^*: H^*(F(M_i,k_i)/\Sigma_i;{\bf{k}})\ar[r]&H^*(\prod _{i=1}^n(F(M_i,k_i)/\Sigma_i);{\bf{k}})\ar@{=}[d]\\
&\bigotimes_{i=1}^n H^*(F(M_i,k_i)/\Sigma_i;{\bf{k}})
}
$$
where ${\bf{k}}$ is an arbitrary field.  The tensor product of these maps is an isomorphism
\begin{eqnarray}\label{0921-5}
\bigotimes_{i=1}^n\pi_i^*: \bigotimes _{i=1}^nH^*(F(M_i,k_i)/\Sigma_i;{\bf{k}})\overset{\cong}\longrightarrow \bigotimes_{i=1}^n H^*(F(M_i,k_i)/\Sigma_i;{\bf{k}}).
\end{eqnarray}

\smallskip

Let $\xi_{M_1,k_1;\cdots;M_n,k_n}^{\mathbb{F}}$ denote the following vector bundle
$$
\xymatrix{
&\mathbb{F}^{\sum_{i=1}^n k_i}\cong \prod_{i=1}^n\mathbb{F}^{k_i}\ar[r]
&\prod_{i=1}^n(F(M_i,k_i)\times_{\Sigma_{k_i}}\mathbb{F}^{k_i} )\ar[d]\\
&&\prod_{i=1}^n(F(M_i,k_i)/\Sigma_{k_i} ).}
$$
For simplicity, we write $\xi_{M_1,k_1;\cdots;M_n,k_n}^{\mathbb{R}}$ as $\xi_{M_1,k_1;\cdots;M_n,k_n}^{}$.  For each $i=1,2,\cdots,n$,  it can be verified immediately that $\pi_i$ induces a pull-back of vector bundles by the following commutative diagram 
\begin{eqnarray}\label{6.5d1}
\xymatrix{
\mathbb{F}^{k_i}\ar[d]\ar@{=}[r]& \mathbb{F}^{k_i}\ar[d]\\
\prod_{ t\neq i}(F(X_t,k_t)/\Sigma_{k_t} )\times (F(M_i,k_i)\times_{\Sigma_{k_i}}\mathbb{F}^{k_i} )\ar[d]\ar[r]& (F(M_i,k_i)\times_{\Sigma_{k_i}}\mathbb{F}^{k_i} )\ar[d]\\
\prod_{t=1}^n(F(X_t,k_t)/\Sigma_{k_t} )\ar[r]^{\pi_i} & F(M_i,k_i)/\Sigma_{k_i}.
}
\end{eqnarray}
Moreover, taking the Whitney sum of  the vector bundles given in the left column of (\ref{6.5d1}) with $i$ going from $1$ to $n$, we can recover the vector bundle $\xi_{M_1,k_1;\cdots;M_n,k_n}^{\mathbb{F}}$. 
\begin{proposition}\label{c2pr4}
We have the isomorphism of vector bundles
\begin{eqnarray*} 
\xi_{M_1,k_1;\cdots;M_n,k_n}^{\mathbb{F}}\cong \bigoplus _{i=1}^n \pi_i^*\xi_{M_i, k_i}^\mathbb{F}.
\end{eqnarray*}
\end{proposition}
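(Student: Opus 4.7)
The plan is to verify the stated isomorphism directly from the definitions by examining how the product action of $\prod_{i=1}^n \Sigma_{k_i}$ acts on the fiber and total space, and then matching the resulting summands with the pullbacks $\pi_i^*\xi_{M_i,k_i}^\mathbb{F}$ via the commutative diagram (\ref{6.5d1}) established immediately before the statement.

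First, I would observe that the canonical identification $\mathbb{F}^{\sum_{i=1}^n k_i}\cong\bigoplus_{i=1}^n\mathbb{F}^{k_i}$ respects the action of $\prod_{i=1}^n\Sigma_{k_i}$ in the sense that $(\sigma_1,\dots,\sigma_n)$ acts on the $i$-th summand $\mathbb{F}^{k_i}$ only through its $i$-th coordinate $\sigma_i$, while leaving the other summands invariant; that is, the product action is the direct sum of the componentwise actions. In particular, for each $i$ the $i$-th summand is an invariant $\mathbb{F}^{k_i}$-subbundle of the fiber. Consequently, the twisted product that defines the total space of $\xi_{M_1,k_1;\cdots;M_n,k_n}^\mathbb{F}$ splits as a Whitney sum of $n$ subbundles over $\prod_{i=1}^n (F(M_i,k_i)/\Sigma_{k_i})$, whose $i$-th summand has fiber $\mathbb{F}^{k_i}$.

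Next, I would identify the total space of this $i$-th summand. Since only $\sigma_i$ acts non-trivially on the $i$-th fiber $\mathbb{F}^{k_i}$, while the $\sigma_t$ for $t\neq i$ only act on the base factors $F(M_t,k_t)$, the twisted product collapses on the $i$-th factor to $F(M_i,k_i)\times_{\Sigma_{k_i}}\mathbb{F}^{k_i}$ and descends to the plain quotient $F(M_t,k_t)/\Sigma_{k_t}$ on the factors with $t\neq i$. This is exactly the total space appearing in the middle row of the pullback square (\ref{6.5d1}). Since that square is a pullback of vector bundles by construction, the $i$-th summand equals $\pi_i^*\xi_{M_i,k_i}^\mathbb{F}$.

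Finally, taking the Whitney sum of these $n$ summands as $i$ runs from $1$ to $n$ recovers $\xi_{M_1,k_1;\cdots;M_n,k_n}^\mathbb{F}$ and produces the desired isomorphism with $\bigoplus_{i=1}^n\pi_i^*\xi_{M_i,k_i}^\mathbb{F}$. There is no genuine obstacle to overcome here; the entire content is the observation that the product structure of the structure group $\prod_i\Sigma_{k_i}$, combined with its block-diagonal action on $\bigoplus_i\mathbb{F}^{k_i}$, forces the associated bundle to split as a sum of pullbacks. The mildest care required is to confirm that the identifications are natural in the base point, which follows from the naturality of the Borel construction under products of group actions.
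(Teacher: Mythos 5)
Your proposal is correct and follows essentially the same route as the paper, which treats the proposition as an immediate consequence of the pullback diagram (\ref{6.5d1}) together with the observation that the Whitney sum of its left columns recovers $\xi_{M_1,k_1;\cdots;M_n,k_n}^{\mathbb{F}}$. You merely make explicit the block-diagonal action of $\prod_{i=1}^n\Sigma_{k_i}$ on $\bigoplus_{i=1}^n\mathbb{F}^{k_i}$ that underlies the splitting, which the paper leaves as ``it can be verified immediately.''
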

The following corollary follows from Proposition~\ref{c2pr4}.
\begin{corollary}\label{c2pr99}
(a). Under the identification given by the isomorphism  (\ref{0921-5}) with $\mathbb{Z}_2$-coefficient, the Stiefel-Whitney classes saitsfy
\begin{eqnarray*} 
w(\pi_i^*\xi_{M_i,k_i})=\pi_i^*w(\xi_{M_i,k_i})=w(\xi_{M_i,k_i}).
\end{eqnarray*}

(b). Under the identification given by the isomorphism  (\ref{0921-5}) with $\mathbb{Z}_p$-coefficient for any primes $p$, the Chern classes saitsfy
\begin{eqnarray*} 
c(\pi_i^*\xi_{M_i,k_i}^{\mathbb{C}})=\pi_i^*c(\xi_{M_i,k_i}^\mathbb{C})=c(\xi_{M_i,k_i}^\mathbb{C}).
\end{eqnarray*}
\end{corollary}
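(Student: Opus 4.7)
The plan is to derive Corollary~\ref{c2pr99} directly from Proposition~\ref{c2pr4} together with the naturality of characteristic classes under pullback, so it will really be a formal check rather than a new argument. The two equalities in each of (a) and (b) play different roles: the first is naturality at the bundle level, while the second is a bookkeeping statement about how $\pi_i^*$ looks after the K\"unneth identification (\ref{0921-5}).

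For the first equality in (a), I would invoke the standard fact that Stiefel--Whitney classes commute with pullbacks of real vector bundles (and analogously Chern classes commute with pullbacks of complex vector bundles for part (b)). Applied to the pullback diagram (\ref{6.5d1}) and the vector bundle $\xi_{M_i,k_i}^{\mathbb{F}}$, this immediately gives
\begin{equation*}
w(\pi_i^*\xi_{M_i,k_i})=\pi_i^*w(\xi_{M_i,k_i}), \qquad c(\pi_i^*\xi_{M_i,k_i}^{\mathbb{C}})=\pi_i^*c(\xi_{M_i,k_i}^{\mathbb{C}}).
\end{equation*}
No computation is required beyond citing naturality.

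For the second equality, I would unpack the isomorphism (\ref{0921-5}). The K\"unneth isomorphism writes $H^*(\prod_{i=1}^n F(M_i,k_i)/\Sigma_{k_i};{\bf k})$ as $\bigotimes_{i=1}^n H^*(F(M_i,k_i)/\Sigma_{k_i};{\bf k})$, and under this identification the projection $\pi_i$ induces on cohomology the canonical inclusion of the $i$-th tensor factor that sends a class $\alpha$ to $1\otimes\cdots\otimes\alpha\otimes\cdots\otimes 1$ (with $\alpha$ in the $i$-th slot). Under the convention (implicit in the statement) that an element of the $i$-th factor is identified with its image in the tensor product, the class $\pi_i^*w(\xi_{M_i,k_i})$ is by definition the same as $w(\xi_{M_i,k_i})$, and likewise for the Chern class. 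This gives the second equality in both (a) and (b).

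There is essentially no obstacle here: both equalities reduce to functorial statements, and part (b) is obtained from the same argument as (a) after replacing mod~$2$ Stiefel--Whitney classes by mod~$p$ Chern classes. The only care required is to make the identification in (\ref{0921-5}) explicit, so that the reader understands that "writing $w(\xi_{M_i,k_i})$" on the product $\prod_{t=1}^n F(M_t,k_t)/\Sigma_{k_t}$ means placing this class in the $i$-th K\"unneth factor. Once that convention is spelled out, the corollary is immediate from Proposition~\ref{c2pr4} and naturality.
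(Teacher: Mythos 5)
Your proposal is correct and matches what the paper intends: the paper offers no written proof beyond asserting the corollary follows from the surrounding setup, and the content is exactly the naturality of Stiefel--Whitney and Chern classes under the pullback $\pi_i^*$ together with the observation that, under the K\"unneth identification (\ref{0921-5}), the map $\pi_i^*$ is the inclusion of the $i$-th tensor factor. Your explicit spelling out of that identification is the only substantive step, and it is handled correctly.
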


The next propositions follows by a straight-forward generalization of   \cite[Lemma~2.11]{high1} or   \cite[Proposition~2.1]{cohen1}. 

\begin{proposition}\label{c2pr8}
Suppose there is a $(M_1,k_1;M_2,k_2;\cdots;M_n,k_n)$-regular map $f:\coprod_{i=1}^n M_i\longrightarrow \mathbb{R}^N$. If 
\begin{eqnarray*}
\bar w_r(\xi_{M_1,k_1;\cdots; M_n,k_n})\neq 0,
\end{eqnarray*}
then $N\geq r+\sum_{i=1}^n k_i$. 
\end{proposition}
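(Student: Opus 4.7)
The plan is to mirror the proof of Lemma~\ref{cor1} in the universal setting supplied by Proposition~\ref{c2pr6}. Write $K = \sum_{i=1}^n k_i$ for brevity. Proposition~\ref{c2pr6} factors the classifying map of $\xi_{M_1,k_1;\cdots;M_n,k_n}$ as $\phi = \iota \circ h$, where $h : \prod_{i=1}^n (F(M_i,k_i)/\Sigma_{k_i}) \to G_{K}(\mathbb{R}^N)$ is built from the hypothesized regular map $f$ and $\iota : G_{K}(\mathbb{R}^N) \hookrightarrow G_{K}(\mathbb{R}^\infty)$ is the canonical inclusion. Applying mod~$2$ cohomology gives
$$
\bar w_r(\xi_{M_1,k_1;\cdots;M_n,k_n}) \;=\; \phi^* \bar w_r \;=\; h^* \iota^* \bar w_r,
$$
so the entire task reduces to showing that $\iota^* \bar w_r$ vanishes as soon as $r$ enters the forbidden range $r \geq N - K + 1$.

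The vanishing in the intermediate range $N - K + 1 \leq r \leq N$ is immediate from the presentation (\ref{0909-1}), which exhibits $\bar w_{N-K+1}, \ldots, \bar w_N$ as generators of the kernel of $\iota^*$. To extend the vanishing to $r > N$, I would exploit the Whitney identity $w \cdot \bar w = 1$, which produces the recursion
$$
\bar w_r \;=\; \sum_{j=1}^{K} w_j \, \bar w_{r-j} \qquad (r \geq 1),
$$
and then propagate membership in the ideal $(\bar w_{N-K+1}, \ldots, \bar w_N)$ by induction on $r$: for $r \geq N+1$ every index $r-j$ with $1 \leq j \leq K$ lies in $\{N-K+1, \ldots, r-1\}$, so each summand is already in the ideal either as a generator or by the inductive hypothesis.

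The contrapositive closes the argument: the hypothesis $\bar w_r(\xi_{M_1,k_1;\cdots;M_n,k_n}) \neq 0$ forces $r \leq N - K$, equivalently $N \geq r + \sum_{i=1}^n k_i$. I do not expect any substantive obstacle, since Proposition~\ref{c2pr6} has already done all the geometric work of identifying $\xi_{M_1,k_1;\cdots;M_n,k_n}$ as a pullback from the finite Grassmannian $G_{K}(\mathbb{R}^N)$; the statement is the natural extension of Cohen--Handel's Lemma~\ref{cor1} to bundles associated with regular maps on disjoint unions, and its proof amounts to a verbatim transcription of that classical argument, following precisely the computation already recorded in (\ref{0930-9}).
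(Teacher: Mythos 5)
Your proof is correct, and its first half coincides with the paper's: the identity $\bar w_r(\xi_{M_1,k_1;\cdots;M_n,k_n})=h^*\iota^*\bar w_r$ and the resulting vanishing for $N-K+1\leq r\leq N$ is exactly the computation recorded in (\ref{0930-9}). Where you diverge is in disposing of the remaining case $r\geq N+1$, and this is in fact the entire content of the paper's own proof of Proposition~\ref{c2pr8}. You argue algebraically inside $\mathbb{Z}_2[w_1,\dots,w_K]$: the Whitney relation $w\cdot\bar w=1$ gives $\bar w_r=\sum_{j=1}^{K}w_j\,\bar w_{r-j}$, and an induction on $r$ shows that every $\bar w_r$ with $r\geq N-K+1$ lies in the ideal $(\bar w_{N-K+1},\dots,\bar w_N)$ of the presentation (\ref{0909-1}), hence dies under $\iota^*$. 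The paper instead argues geometrically by stabilization: if $\bar w_t\neq 0$ with $t\geq N+1$, compose $f$ with the inclusions $\mathbb{R}^N\hookrightarrow\mathbb{R}^m$ for $m=N+1,N+2,\dots$; each such composite is again regular, so $t$ must avoid every window $[m-K+1,m]$, forcing $t\geq m+1$ for all $m$ and contradicting finiteness of $t$. Both arguments are sound; yours is the more self-contained and standard one (it is the usual proof that the ideal in (\ref{0909-1}) absorbs all higher dual classes), while the paper's trick has the virtue of needing nothing beyond the already-established window (\ref{0930-9}) and no further manipulation of the Grassmannian cohomology. One small caveat: your induction implicitly uses $N\geq K$ so that the indices $r-j$ stay nonnegative and within the inductively controlled range, but this is automatic since a $(M_1,k_1;\cdots;M_n,k_n)$-regular map requires $K$ linearly independent vectors in $\mathbb{R}^N$.
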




The next proposition follows from Proposition~\ref{c2pr8}. 

\begin{proposition}\label{c2co1}
Let $\lambda_i$ be the largest integer such that 
\begin{eqnarray*}
\bar w_{\lambda_i}(\xi_{M_i,k_i})\neq 0.
\end{eqnarray*}
If there is a $(M_1,k_1;M_2,k_2;\cdots;M_n,k_n)$-regular map $f:\coprod_{i=1}^n M_i\longrightarrow \mathbb{R}^N$, then 
\begin{eqnarray*}\label{c2eq6}
N\geq \sum_{i=1}^n (\lambda_i+k_i). 
\end{eqnarray*}
\end{proposition}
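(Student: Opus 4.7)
The plan is to reduce Proposition~\ref{c2co1} to Proposition~\ref{c2pr8} by exhibiting a nonzero dual Stiefel-Whitney class of $\xi_{M_1,k_1;\cdots;M_n,k_n}$ in top-possible degree $\sum_{i=1}^n \lambda_i$. The input that does the heavy lifting is the bundle decomposition of Proposition~\ref{c2pr4}, together with the naturality statement of Corollary~\ref{c2pr99} and the K\"unneth isomorphism (\ref{0921-5}) with $\mathbb{Z}_2$-coefficients.

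First I would invoke Proposition~\ref{c2pr4} to write
\begin{eqnarray*}
\xi_{M_1,k_1;\cdots;M_n,k_n}\cong \bigoplus_{i=1}^n \pi_i^*\xi_{M_i,k_i},
\end{eqnarray*}
and apply the Whitney product formula to its dual Stiefel-Whitney class:
\begin{eqnarray*}
\bar w(\xi_{M_1,k_1;\cdots;M_n,k_n})=\prod_{i=1}^n \bar w(\pi_i^*\xi_{M_i,k_i}).
\end{eqnarray*}
By Corollary~\ref{c2pr99}(a), $\bar w(\pi_i^*\xi_{M_i,k_i})=\pi_i^*\bar w(\xi_{M_i,k_i})$; under the K\"unneth identification (\ref{0921-5}) with $\bf{k}=\mathbb{Z}_2$, this corresponds to the element $1\otimes\cdots\otimes \bar w(\xi_{M_i,k_i})\otimes\cdots\otimes 1$ sitting in the $i$-th tensor factor.

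Next I would read off the component of top degree. Multiplying across $i$, the component of $\bar w(\xi_{M_1,k_1;\cdots;M_n,k_n})$ of degree $\sum_{i=1}^n \lambda_i$ is
\begin{eqnarray*}
\bar w_{\sum_i \lambda_i}(\xi_{M_1,k_1;\cdots;M_n,k_n})=\bar w_{\lambda_1}(\xi_{M_1,k_1})\otimes \bar w_{\lambda_2}(\xi_{M_2,k_2})\otimes\cdots\otimes \bar w_{\lambda_n}(\xi_{M_n,k_n})
\end{eqnarray*}
under (\ref{0921-5}). Since $\mathbb{Z}_2$ is a field, the K\"unneth theorem guarantees that the tensor product of nonzero cohomology classes is nonzero. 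By the maximality hypothesis on each $\lambda_i$, every factor on the right is nonzero, hence so is the left-hand side.

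Finally I would feed this into Proposition~\ref{c2pr8} with $r=\sum_i \lambda_i$ to conclude
\begin{eqnarray*}
N\geq \sum_{i=1}^n \lambda_i+\sum_{i=1}^n k_i=\sum_{i=1}^n (\lambda_i+k_i),
\end{eqnarray*}
which is the claim. I do not anticipate a serious obstacle: the only subtlety is making sure the K\"unneth identification is compatible with the $\pi_i^*$-pullbacks, but this is precisely what Corollary~\ref{c2pr99} records. Everything else is a bookkeeping application of Proposition~\ref{c2pr4}, the Whitney product formula, and the already-established obstruction in Proposition~\ref{c2pr8}.
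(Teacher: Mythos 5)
Your proposal is correct and follows essentially the same route as the paper: decompose the bundle via Proposition~\ref{c2pr4}, apply the Whitney product formula and Corollary~\ref{c2pr99}(a) to get $\bar w(\xi_{M_1,k_1;\cdots;M_n,k_n})=\prod_i \bar w(\xi_{M_i,k_i})$, identify the degree-$\sum_i\lambda_i$ component as a nonzero tensor product of the $\bar w_{\lambda_i}(\xi_{M_i,k_i})$, and conclude with Proposition~\ref{c2pr8}. Your explicit appeal to the K\"unneth theorem over $\mathbb{Z}_2$ to justify the nonvanishing of the product is a detail the paper leaves implicit, but it is the same argument.
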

\begin{proof}
It follows from Proposition~\ref{c2pr4} and Corollary~\ref{c2pr99} (a)  that 
\begin{eqnarray}\label{c2eq3}
w(\xi_{M_1,k_1;M_2,k_2;\cdots;M_n,k_n})&=&w(\bigoplus_{i=1}^n \pi^*_i \xi_{M_i,k_i})\nonumber\\
&=& \prod_{i=1}^n w (\pi^*_i \xi _{M_i,k_i})\nonumber\\
&=&\prod_{i=1}^n w(\xi_{M_i,k_i})\nonumber.
\end{eqnarray}
Therefore, 
\begin{eqnarray}\label{c2eq4}
\bar w(\xi_{M_1,k_1;M_2,k_2;\cdots;M_n,k_n})=\prod_{i=1}^n \bar w(\xi_{M_i,k_i}).
\end{eqnarray}
By the definition of $\lambda_i$'s, it follows from (\ref{c2eq4}) that
\begin{eqnarray}\label{c2eq5}
\bar w_{\sum_{i=1}^n\lambda_i}(\xi_{M_1,k_1;M_2,k_2;\cdots;M_n,k_n})=\prod_{i=1}^n \bar w_{\lambda_i}(\xi_{M_i,k_i})\neq 0.
\end{eqnarray}
Suppose there is a $(M_1,k_1;M_2,k_2;\cdots;M_n,k_n)$-regular map $f:\coprod_{i=1}^n M_i\longrightarrow \mathbb{R}^N$. Then by applying (\ref{c2eq5}) to Proposition~\ref{c2pr8}, we obtain Proposition~\ref{c2co1}. 
\end{proof}

\begin{corollary}\label{c2co2}
For $i=1,2,\cdots,n$, let $M_i$ be a closed, connected $m_i$-dimensional manifold and $h_i$ be the largest integer such that 
\begin{eqnarray*}
 \bar w_{h_i}(M_i)\neq 0.
\end{eqnarray*}
If there is a $(M_1,2;M_2,2;\cdots;M_n,2)$-regular map of $\coprod_{i=1}^n M_i$ into $\mathbb{R}^N$, then 
\begin{eqnarray*} 
N\geq \sum_{i=1}^n (m_i+h_i)+2n. 
\end{eqnarray*}
\end{corollary}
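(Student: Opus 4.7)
The plan is to derive Corollary~\ref{c2co2} as an immediate consequence of two earlier results: Corollary~\ref{c2co3}, which computes the top nonvanishing dual Stiefel--Whitney class of $\xi_{M,2}$ in terms of the top nonvanishing dual Stiefel--Whitney class of $M$, and Proposition~\ref{c2co1}, which converts such top-class information into a lower bound on $N$ for regular maps on disjoint unions.

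First, I would apply Corollary~\ref{c2co3} separately to each factor. For each $i$, $M_i$ is a closed connected $m_i$-manifold and $h_i$ is by hypothesis the largest integer with $\bar w_{h_i}(M_i)\neq 0$. Corollary~\ref{c2co3} then gives that the largest integer $\lambda_i$ such that $\bar w_{\lambda_i}(\xi_{M_i,2})\neq 0$ is exactly
\begin{eqnarray*}
\lambda_i = m_i+h_i.
\end{eqnarray*}

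Second, I would feed this data into Proposition~\ref{c2co1}. Taking $k_1=k_2=\cdots=k_n=2$ there, and using the values $\lambda_i=m_i+h_i$ just obtained, the hypothesis of the present corollary (the existence of a $(M_1,2;\cdots;M_n,2)$-regular map into $\mathbb{R}^N$) forces
\begin{eqnarray*}
N\;\geq\;\sum_{i=1}^n(\lambda_i+k_i)\;=\;\sum_{i=1}^n\bigl((m_i+h_i)+2\bigr)\;=\;\sum_{i=1}^n(m_i+h_i)+2n,
\end{eqnarray*}
which is precisely the claimed inequality.

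Because both ingredients are already established, there is essentially no obstacle; the only thing to be careful about is the bookkeeping that each $h_i$ is finite (so that ``largest integer'' makes sense) --- this is automatic since $M_i$ is finite-dimensional, forcing $\bar w_q(M_i)=0$ for all $q>m_i$, and $\bar w_0(M_i)=1\neq 0$ guarantees existence of $h_i$. No further argument is needed.
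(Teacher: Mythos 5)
Your proposal matches the paper's proof exactly: it applies Corollary~\ref{c2co3} to identify $\lambda_i=m_i+h_i$ as the top nonvanishing dual Stiefel--Whitney class of $\xi_{M_i,2}$, and then invokes Proposition~\ref{c2co1} with $k_i=2$ to get the bound. The extra remark on the finiteness of $h_i$ is a harmless addition; no issues.
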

\begin{proof}
By Corollary~\ref{c2co3}, we see that the largest integer $\lambda_i$ such that 
\begin{eqnarray*}
 \bar w_{\lambda_i}(\xi_{M_i,2})\neq 0
\end{eqnarray*}
is $\lambda_i=m_i+h_i$. By Proposition~\ref{c2co1}, we finish the proof.
\end{proof}
\begin{remark}\label{1007-r2}
Since any $2n$-regular map on $\coprod_{i=1}^n M_i$ is a $(M_1,2;$ $M_2,2;$ $\cdots;$ $M_n,2)$-regular map, we re-obtain \cite[Theorem 2.4]{handel2}  from  Corollary~\ref{c2co2}. 
\end{remark}



The next proposition is a straight-forward generalization of \cite[Lemma~5.7]{high2}.
 
\begin{proposition}\label{c2pr113}
Suppose there is a complex $(M_1,k_1;$ $M_2,k_2;$ $\cdots;$ $M_n,k_n)$-regular map $f:\coprod_{i=1}^n M_i\longrightarrow \mathbb{C}^N$.  If 
\begin{eqnarray*}
\bar c_r(\xi_{M_1,k_1;\cdots; M_n,k_n}^\mathbb{C})\neq 0,
\end{eqnarray*}
then $N\geq r+\sum_{i=1}^n k_i$. 
\end{proposition}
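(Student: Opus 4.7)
The plan is to mimic the proof of Proposition~\ref{c2pr8} verbatim, substituting Chern classes for Stiefel-Whitney classes and $\mathbb{Z}_p$-cohomology for $\mathbb{Z}_2$-cohomology. All the ingredients are already in place: the commutative diagram given just before (\ref{0930-8}) yields that
\begin{eqnarray*}
\bar c_t(\xi_{M_1,k_1;\cdots;M_n,k_n}^\mathbb{C})=0\qquad\text{for all }N-\sum_{i=1}^n k_i+1\leq t\leq N,
\end{eqnarray*}
so the hypothesis $\bar c_r(\xi_{M_1,k_1;\cdots;M_n,k_n}^\mathbb{C})\neq 0$ forces $r$ to lie outside this forbidden window, i.e.\ either $r\leq N-\sum_{i=1}^n k_i$ or $r\geq N+1$.

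The substance of the argument is then to rule out the second alternative. I would proceed exactly as in Proposition~\ref{c2pr8}: for any integer $m\geq N$, composing $f$ with the canonical linear inclusion $\iota_{N,m}:\mathbb{C}^N\hookrightarrow \mathbb{C}^m$ (as the first $N$ coordinates) produces another complex $(M_1,k_1;\cdots;M_n,k_n)$-regular map $\iota_{N,m}\circ f:\coprod_{i=1}^n M_i\longrightarrow \mathbb{C}^m$, because linear independence is preserved under linear inclusions. Applying (\ref{0930-8}) to this new map with $N$ replaced by $m$, we conclude that $r$ must avoid $[m-\sum_{i=1}^n k_i+1,m]$ as well, hence either $r\leq m-\sum_{i=1}^n k_i$ or $r\geq m+1$. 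Supposing $r\geq N+1$ and then letting $m=N+1,N+2,\ldots$ in succession, we would deduce $r\geq N+2, N+3,\ldots$, contradicting the finiteness of $r$.

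The remaining possibility $r\leq N-\sum_{i=1}^n k_i$ is exactly the desired inequality $N\geq r+\sum_{i=1}^n k_i$, completing the argument.

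There is essentially no obstacle here beyond verifying that each step of the real case transfers to the complex setting; the only conceptual check is that the diagram (\ref{6.5d1}) and Proposition~\ref{c2pr6} are stated uniformly in $\mathbb{F}\in\{\mathbb{R},\mathbb{C}\}$, so the complex Grassmannian presentation (\ref{0909-2}) combined with $\iota^*$ being an epimorphism on integral (hence mod-$p$) cohomology yields (\ref{0930-8}) in precisely the same manner as (\ref{0930-9}). The remark in the excerpt about an alternative proof via the inverse bundle also carries over: one may equally well argue that since $\xi_{M_1,k_1;\cdots;M_n,k_n}^\mathbb{C}$ admits a trivial complement of complex rank $N-\sum_{i=1}^n k_i$ in the trivial bundle of rank $N$ over $\prod_i F(M_i,k_i)/\Sigma_{k_i}$, its dual (inverse) bundle has rank at most $N-\sum_{i=1}^n k_i$, so all Chern classes of the inverse bundle in degree exceeding this rank must vanish.
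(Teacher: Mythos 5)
Your proposal is correct and follows exactly the route the paper takes: the paper itself states that Proposition~\ref{c2pr113} follows from (\ref{0930-8}) by applying the proof of Proposition~\ref{c2pr8} with minor modifications, which is precisely the vanishing-window-plus-stabilization argument you spell out. Your closing observation about the inverse-bundle alternative also matches the paper's own remark following the proposition.
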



The next proposition follows from Proposition~\ref{c2pr113}. 

\begin{proposition}\label{c2co114}
Let $\tau_i$ be the largest integer such that 
\begin{eqnarray*}
\bar c_{\tau_i}(\xi_{M_i,k_i}^\mathbb{C})\neq 0.
\end{eqnarray*}
If there is a complex $(M_1,k_1;M_2,k_2;\cdots;M_n,k_n)$-regular map $f:\coprod_{i=1}^n M_i\longrightarrow \mathbb{C}^N$, then 
\begin{eqnarray*} 
N\geq \sum_{i=1}^n (\tau_i+k_i). 
\end{eqnarray*}
\end{proposition}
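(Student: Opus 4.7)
The plan is to mirror the proof of Proposition~\ref{c2co1}, replacing Stiefel--Whitney classes by Chern classes and Proposition~\ref{c2pr8} by its complex analogue Proposition~\ref{c2pr113}. Working with $\mathbb{Z}_p$ coefficients for a fixed prime $p$ (as stipulated at the beginning of Section~\ref{6.2}), the Künneth isomorphism (\ref{0921-5}) will let me identify cohomology classes on the product of unordered configuration spaces with tensor products of classes on the individual factors.

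First I would apply Proposition~\ref{c2pr4} to decompose
\begin{eqnarray*}
\xi_{M_1,k_1;\cdots;M_n,k_n}^{\mathbb{C}}\cong \bigoplus_{i=1}^n \pi_i^*\xi_{M_i,k_i}^{\mathbb{C}}.
\end{eqnarray*}
By the Whitney product formula for total Chern classes and Corollary~\ref{c2pr99}(b), this yields, under the identification provided by (\ref{0921-5}),
\begin{eqnarray*}
c(\xi_{M_1,k_1;\cdots;M_n,k_n}^{\mathbb{C}}) =\prod_{i=1}^n \pi_i^* c(\xi_{M_i,k_i}^{\mathbb{C}}) = \prod_{i=1}^n c(\xi_{M_i,k_i}^{\mathbb{C}}),
\end{eqnarray*}
and therefore
\begin{eqnarray*}
\bar c(\xi_{M_1,k_1;\cdots;M_n,k_n}^{\mathbb{C}})=\prod_{i=1}^n \bar c(\xi_{M_i,k_i}^{\mathbb{C}}).
\end{eqnarray*}

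Next I would extract the degree-$\sum_i \tau_i$ component of this identity. Under the Künneth isomorphism, this component equals the tensor product $\bigotimes_{i=1}^n \bar c_{\tau_i}(\xi_{M_i,k_i}^{\mathbb{C}})$, since all contributions of higher degree on any single factor vanish by the maximality of $\tau_i$. Because each tensor factor is a nonzero class in a $\mathbb{Z}_p$-vector space, the Künneth isomorphism guarantees that the tensor product is nonzero, hence
\begin{eqnarray*}
\bar c_{\sum_{i=1}^n \tau_i}(\xi_{M_1,k_1;\cdots;M_n,k_n}^{\mathbb{C}})\neq 0.
\end{eqnarray*}

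Finally, applying Proposition~\ref{c2pr113} with $r=\sum_{i=1}^n \tau_i$ yields $N\geq \sum_{i=1}^n(\tau_i+k_i)$, as desired. The only mildly delicate point is verifying that the top-degree term in the product of $\bar c$'s is precisely the tensor of the top nonvanishing Chern classes from each factor; this is automatic once one works with field coefficients and invokes Künneth, but it is essential that we are using $\mathbb{Z}_p$ rather than $\mathbb{Z}$, matching the convention established at the start of Section~\ref{6.2}.
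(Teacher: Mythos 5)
Your proposal is correct and follows essentially the same route as the paper's own proof: decompose the bundle via Proposition~\ref{c2pr4}, apply the Whitney product formula and Corollary~\ref{c2pr99}(b) to get $\bar c(\xi_{M_1,k_1;\cdots;M_n,k_n}^{\mathbb{C}})=\prod_{i}\bar c(\xi_{M_i,k_i}^{\mathbb{C}})$, extract the nonzero degree-$\sum_i\tau_i$ term, and invoke Proposition~\ref{c2pr113}. Your explicit remark that the nonvanishing of the top term relies on the K\"unneth isomorphism over the field $\mathbb{Z}_p$ is a point the paper leaves implicit, but it is the same argument.
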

\begin{proof}
It follows from Proposition~\ref{c2pr4} and Corollary~\ref{c2pr99} (b)  that 
\begin{eqnarray}\label{c2eq115}
c(\xi_{M_1,k_1;M_2,k_2;\cdots;M_n,k_n}^\mathbb{C})&=&c(\bigoplus_{i=1}^n \pi^*_i \xi_{M_i,k_i}^\mathbb{C})\nonumber\\
&=& \prod_{i=1}^n c (\pi^*_i \xi _{M_i,k_i}^\mathbb{C})\nonumber\\
&=&\prod_{i=1}^n c(\xi_{M_i,k_i}^\mathbb{C})\nonumber.
\end{eqnarray}
Therefore, 
\begin{eqnarray}\label{c2eq114}
\bar c(\xi_{M_1,k_1;M_2,k_2;\cdots;M_n,k_n}^\mathbb{C})=\prod_{i=1}^n \bar c(\xi_{M_i,k_i}^\mathbb{C}).
\end{eqnarray}
By the definition of $\tau_i$'s, it follows from (\ref{c2eq114}) that
\begin{eqnarray}\label{c2eq119}
\bar c_{\sum_{i=1}^n\tau_i}(\xi_{M_1,k_1;M_2,k_2;\cdots;M_n,k_n}^\mathbb{C})=\prod_{i=1}^n \bar c_{\tau_i}(\xi_{M_i,k_i}^\mathbb{C})\neq 0.
\end{eqnarray}
Suppose there is a complex $(M_1,k_1;M_2,k_2;\cdots;M_n,k_n)$-regular map $f:\coprod_{i=1}^n M_i\longrightarrow \mathbb{C}^N$. Then by applying (\ref{c2eq119}) to Proposition~\ref{c2pr113}, we obtain Proposition~\ref{c2co114}. 
\end{proof}

\section{Proof of Theorem~\ref{c2th1}}\label{6.3}

\begin{proof}[Proof of Theorem~\ref{c2th1}]
By Lemma~\ref{c2pr10}, the largest integer $\lambda_{0,s}$ such that 
$
\bar w_{\lambda_{0,s}}(\xi_{\mathbb{R}^2,2^{d_s}})\neq 0
$ 
  is  
  $
 \lambda_{0,s}=2^{d_s}-1.
 $
  By Lemma~\ref{c2pr9}, the largest integer $\lambda_{1,i}$ such that 
 $
 \bar w_{\lambda_{1,i}}(\xi_{S^{m_{1,i}},2})\neq 0
 $ 
 is 
  $
 \lambda_{1,i}=m_{1,i}.
 $
  By Lemma~\ref{c2pr11}, the largest integer $\lambda_{2,j}$ such that 
 $
 \bar w_{\lambda_{2,j}}(\xi_{\mathbb{R}P^{m_{2,j}},2})\neq 0
 $ 
 is  
  $\lambda_{2,j}=2^{[\log_2m_{2,j}]+1}-1.
 $
  By Lemma~\ref{c2pr12}, the largest integer $\lambda_{3,t}$ such that 
 $
 \bar w_{\lambda_{3,t}}(\xi_{\mathbb{C}P^{m_{3,t}},2})\neq 0
 $ 
 is 
  $\lambda_{3,t}=2^{[\log_2m_{3,t}]+2}-2.
 $   
 By Lemma~\ref{c2pr20}, the largest integer $\lambda_{4,l}$ such that 
$
\bar w_{\lambda_{4,l}}(\xi_{\mathbb{H}P^{m_{4,l}},2})\neq 0
$ 
is 
 $\lambda_{4,l}=2^{[\log_2m_{4,l}]+3}-4.
$   
By applying   
all the above to Proposition~\ref{c2co1}, we have
\begin{eqnarray*}
N&\geq& \sum_{s=1}^{k_0}(\lambda_{0,s}+2^{d_s})+\sum_{i=1}^{k_1}(\lambda_{1,i}+2)+\sum_{j=1}^{k_2}(\lambda_{2,j}+2)\\
& &+\sum_{t=1}^{k_3}(\lambda_{3,t}+2)+\sum_{l=1}^{k_4}(\lambda_{4,l}+2)\\
&=& \sum_{s=1}^{k_0}2^{d_s+1}+\sum_{i=1}^{k_1}m_{1,i}+\sum_{j=1}^{k_2}2^{[\log_2m_{2,j}]+1}\\
& &+\sum_{t=1}^{k_3}2^{[\log_2m_{3,t}]+2}+\sum_{l=1}^{k_4}2^{[\log_2m_{4,l}]+3}-k_0+2k_1+k_2-2k_4.
\end{eqnarray*}
The assertion follows.
\end{proof}

Corollary~\ref{c2th3} follows immediately from Theorem~\ref{c2th1} and  Example~\ref{c2pr2}. 

\section{Complex regular maps on Euclidean spaces, spheres and complex projective spaces}\label{sec8}

\begin{proposition}\label{c2th888}
For    an odd prime $p$, 
if there exists a complex $(\mathbb{R}^{m_1},p;$ $\cdots;$ $\mathbb{R}^{m_{k}},p)$-regular map
\begin{eqnarray*}
f: \coprod_{j=1}^{k } \mathbb{R}^{m_j}\longrightarrow \mathbb{C}^N,
\end{eqnarray*}
then
\begin{eqnarray*}
N\geq \sum_{j=1}^{k } [\frac{m_j+1}{2}](p-1)+ k .
\end{eqnarray*}
\end{proposition}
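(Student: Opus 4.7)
The plan is to deduce Proposition~\ref{c2th888} from the disjoint-union obstruction of Proposition~\ref{c2co114} together with the key Chern-class nonvanishing that underlies \cite[Theorem~5.2]{high2}. Taking each $(M_j,k_j)=(\mathbb{R}^{m_j},p)$ and letting $\tau_j$ be the largest integer with $\bar c_{\tau_j}(\xi_{\mathbb{R}^{m_j},p}^{\mathbb{C}})\neq 0$, Proposition~\ref{c2co114} applied to the given complex regular map immediately yields
$$N\;\geq\;\sum_{j=1}^{k}(\tau_j+p).$$
So the task reduces to giving a sharp lower bound on each $\tau_j$.

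The second step is to read off from the proof of \cite[Theorem~5.2]{high2} the actual Chern-class input, namely
$$\bar c_{t_m}\bigl(\xi_{\mathbb{R}^m,p}^{\mathbb{C}}\bigr)\;\neq\; 0\qquad\text{with}\qquad t_m\;=\;\bigl([\tfrac{m+1}{2}]-1\bigr)(p-1).$$
This is precisely the nonvanishing statement whose combination with Lemma~\ref{cor2} produces the single-manifold lower bound $N\geq [\frac{m+1}{2}](p-1)+1$. It gives $\tau_j\geq t_{m_j}$ for every $j$, and summing yields
$$N\;\geq\;\sum_{j=1}^{k}\bigl(t_{m_j}+p\bigr)\;=\;\sum_{j=1}^{k}\bigl([\tfrac{m_j+1}{2}]-1\bigr)(p-1)+kp\;=\;\sum_{j=1}^{k}[\tfrac{m_j+1}{2}](p-1)+k,$$
which is the claim.

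The only genuinely hard ingredient is the nonvanishing of $\bar c_{t_m}(\xi_{\mathbb{R}^m,p}^{\mathbb{C}})$ itself; this rests on the mod-$p$ cohomology of $F(\mathbb{R}^m,p)/\Sigma_p$ expressed via the Dickson algebra and a restriction argument to the normalizer of a $p$-Sylow subgroup, as carried out by Blagojevi\'c, Cohen, L\"uck and Ziegler, and I would simply cite it rather than reprove it. The passage from the single-manifold statement to the disjoint-union statement is then purely formal, supplied by the product formula $\bar c(\xi_{M_1,k_1;\cdots;M_n,k_n}^{\mathbb{C}})=\prod_{i}\pi_i^{*}\bar c(\xi_{M_i,k_i}^{\mathbb{C}})$ established in Proposition~\ref{c2pr4} and Corollary~\ref{c2pr99}(b), packaged into Proposition~\ref{c2co114}.
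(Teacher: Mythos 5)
Your proposal is correct and follows essentially the same route as the paper: apply Proposition~\ref{c2co114} with $(M_j,k_j)=(\mathbb{R}^{m_j},p)$ and feed in the nonvanishing $\bar c_{[\frac{m-1}{2}](p-1)}(\xi_{\mathbb{R}^m,p}^{\mathbb{C}})\neq 0$ cited from the proof of \cite[Theorem~5.2]{high2} (your index $([\frac{m+1}{2}]-1)(p-1)$ equals the paper's $[\frac{m-1}{2}](p-1)$ for all $m$). The arithmetic at the end matches the paper's as well.
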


Suppose there exists a   complex $np$-regular map $f: \mathbb{R}^m\longrightarrow \mathbb{C}^N$. 
Then the composition 
\begin{eqnarray*}
\coprod_{i=1}^n \mathbb{R}^m\overset{\cong}{\longrightarrow} \coprod_{i=1}^n {D}^m\lhook\joinrel\relbar\joinrel\rightarrow \mathbb{C}^m\overset{f}{\longrightarrow}\mathbb{C}^N
\end{eqnarray*}
gives a complex $(\underbrace{\mathbb{R}^m,p;\cdots;\mathbb{R}^m,p}_{n})$-regular map from $\coprod_{i=1}^n \mathbb{R}^m$ into $\mathbb{C}^N$ where $D^m$ denotes the unit open ball in $\mathbb{R}^m$.   Hence as a consequence of Proposition~\ref{c2th888},  we  obtain the next proposition.  
\begin{proposition}\label{th1025-1}
Let $p$ be an odd prime.  If there exists a complex $np$-regular map from $ \mathbb{R}^m$ into  $\mathbb{C}^N$, then $N\geq n([\frac{m+1}{2}] (p-1)+1)$. 
\end{proposition}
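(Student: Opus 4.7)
The plan is to derive Proposition~\ref{th1025-1} as a direct corollary of Proposition~\ref{c2th888} by exhibiting, from a single complex $np$-regular map on $\mathbb{R}^m$, a regular map on a disjoint union of $n$ copies of $\mathbb{R}^m$ that satisfies the hypothesis of Proposition~\ref{c2th888} with $k=n$ and $m_1=\cdots=m_n=m$. The whole argument is thus a reduction.

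To carry this out, I would first choose $n$ pairwise disjoint open balls $B_1,\ldots,B_n$ inside $\mathbb{R}^m$ (e.g.\ balls of radius $\tfrac14$ centered at $(1,0,\ldots,0),\ldots,(n,0,\ldots,0)$). Each $B_i$ is homeomorphic to the open unit ball $D^m$ and hence to $\mathbb{R}^m$, so I can assemble a continuous embedding $\iota:\coprod_{i=1}^n\mathbb{R}^m\xrightarrow{\cong}\coprod_{i=1}^n B_i\hookrightarrow\mathbb{R}^m$. Composing with $f:\mathbb{R}^m\to\mathbb{C}^N$ produces a candidate map $g=f\circ\iota:\coprod_{i=1}^n\mathbb{R}^m\to\mathbb{C}^N$.

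Next I would verify that $g$ is complex $(\mathbb{R}^m,p;\cdots;\mathbb{R}^m,p)$-regular (with $n$ slots) in the sense of Definition~\ref{c2def1}. Given distinct points $x_{i,1},\ldots,x_{i,p}$ in the $i$-th copy of $\mathbb{R}^m$ for each $i=1,\ldots,n$, their $\iota$-images lie in the pairwise disjoint balls $B_1,\ldots,B_n$, so they form $np$ pairwise distinct points of $\mathbb{R}^m$. The complex $np$-regularity of $f$ then forces their $g$-images to be linearly independent in $\mathbb{C}^N$, which is precisely the required non-degeneracy condition.

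Finally, applying Proposition~\ref{c2th888} to $g$ with $k=n$ and $m_j=m$ yields the advertised bound $N\geq n([\tfrac{m+1}{2}](p-1)+1)$. There is no real obstacle here: the substantive content has been invested in Proposition~\ref{c2th888}, and the only observation needed is that the very strong condition requiring linear independence of images of every $np$-tuple of distinct points of $\mathbb{R}^m$ specializes trivially to the weaker combinatorial condition demanding independence only of the $n$-packeted $(p,\ldots,p)$-tuples drawn from disjoint subdomains.
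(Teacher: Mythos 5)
Your reduction is correct and is essentially identical to the paper's own argument: the paper likewise embeds $\coprod_{i=1}^n \mathbb{R}^m$ as $n$ disjoint open balls inside $\mathbb{R}^m$, composes with $f$ to get a complex $(\mathbb{R}^m,p;\cdots;\mathbb{R}^m,p)$-regular map, and invokes Proposition~\ref{c2th888} with $k=n$ and $m_1=\cdots=m_n=m$. No gaps.
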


\begin{proof}[Proof of Proposition~\ref{c2th888}]
Throughout the proof, we let the cohomology coefficients to be $\mathbb{Z}_p$, where $p$ is the given odd prime.  Let $\nu_j$ be the largest integer such that the  Chern class with mod $p$ coefficients 
\begin{eqnarray*}
\bar c_{\nu_j}(\xi_{\mathbb{R}^{m_j},p}^\mathbb{C})\neq 0. 
\end{eqnarray*}
 We notice that $\nu_j$ must be finite. On the other hand, in \cite[Proof of Theorem~5.2]{high2}, it is proved that the Chern class with mod $p$ coefficients satisfies 
\begin{eqnarray*}
\bar c_{[\frac{m_j-1}{2}](p-1)}(\xi^\mathbb{C}_{\mathbb{R}^{m_j},p})\neq 0.
\end{eqnarray*}
 Hence 
 \begin{eqnarray*}
 \nu_j\geq [\frac{m_j-1}{2}](p-1).
 \end{eqnarray*}
  With the help of Proposition~\ref{c2co114}, we have
\begin{eqnarray*}
N&\geq& \sum_{j=1}^{k }(\nu_j+p)\\
&\geq & 
\sum_{j=1}^{k } [\frac{m_j+1}{2}](p-1)+ k.
\end{eqnarray*}
The assertion follows. 
\end{proof}

The next proposition follows   from   Lemma~\ref{cor2} and Lemma~\ref{0922-le3}.

\begin{proposition}
\label{complex}
If there exists a  complex $2$-regular map of $S^m$ into $\mathbb{C}^N$, then $N\geq [\frac{m}{2}] +2$.  
\end{proposition}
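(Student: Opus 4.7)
The plan is to read off Proposition~\ref{complex} directly from the two obstruction results already established in the paper, namely the Chern-class obstruction for complex $k$-regular maps (Lemma~\ref{cor2}) and the explicit computation of the top non-vanishing dual Chern class of $\xi_{S^m,2}^\mathbb{C}$ (Lemma~\ref{0922-le3}). No new topological input is required: the substantive work has been localized in those two lemmas.

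First, I would invoke Lemma~\ref{0922-le3}, which guarantees that $\bar c_{[\frac{m}{2}]}(\xi_{S^m,2}^\mathbb{C})\neq 0$. The essential facts underpinning that lemma, already in hand, are: the homotopy equivalence $F(S^m,2)/\Sigma_2\simeq \mathbb{R}P^m$; the description of its integral cohomology as $\mathbb{Z}[x]/(2x, x^{[\frac{m}{2}]+1})$ with $|x|=2$; the identification $c_1(\xi_{S^m,2}^\mathbb{C})=x$, which follows from the non-triviality of $\zeta_{S^m,2}^\mathbb{C}$ as a complex line bundle; and the vanishing $c_2(\xi_{S^m,2}^\mathbb{C})=0$ given by the trivial summand splitting in (\ref{0930-6}). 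These together force $\bar c_t(\xi_{S^m,2}^\mathbb{C})=(-1)^t x^t$, which is non-zero precisely for $t\leq [\frac{m}{2}]$.

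Then I would apply Lemma~\ref{cor2} with $M=S^m$, $k=2$, and $t=[\frac{m}{2}]$. The Chern-class obstruction immediately delivers $N\geq [\frac{m}{2}]+2$, which is the desired bound. The only point where one should be deliberate is verifying that the existence of a complex $2$-regular map $f\colon S^m\longrightarrow \mathbb{C}^N$ indeed activates the hypothesis of Lemma~\ref{cor2}; this is immediate from the statement of that lemma. Since the argument is pure citation-and-assemble, I do not anticipate any serious obstacle: Proposition~\ref{complex} is a direct specialization of the general machinery developed in the preceding sections.
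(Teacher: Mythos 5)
Your proposal is correct and coincides with the paper's own argument: the paper derives Proposition~\ref{complex} exactly by combining Lemma~\ref{cor2} with Lemma~\ref{0922-le3}, applied with $k=2$ and $t=[\frac{m}{2}]$. Your recap of the ingredients inside Lemma~\ref{0922-le3} also matches the paper's proof of that lemma, so nothing is missing.
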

 
The next proposition follows  from   Lemma~\ref{cor2} and Lemma~\ref{1007}.

\begin{proposition}\label{cpm2}
Let $m\geq 4$. If there exists a complex $2$-regular map of $\mathbb{C}P^m$ into $\mathbb{C}^N$, then $N\geq 2m$. 
\end{proposition}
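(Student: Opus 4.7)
The plan is to combine the two previously established ingredients directly: the obstruction from dual Chern classes of $\xi_{\mathbb{C}P^m,2}^{\mathbb{C}}$ provided by Lemma~\ref{cor2}, and the lower bound on the top nonvanishing dual Chern class provided by Lemma~\ref{1007}.

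First, I would invoke Lemma~\ref{1007}, which produces an integer $\kappa$ with
\begin{eqnarray*}
\bar c_{\kappa}(\xi_{\mathbb{C}P^m,2}^{\mathbb{C}})\neq 0,\qquad \kappa\geq 2m-2.
\end{eqnarray*}
In particular, the specific value $t=2m-2$ satisfies $\bar c_t(\xi_{\mathbb{C}P^m,2}^{\mathbb{C}})\neq 0$ (since $\kappa$ is the \emph{largest} such integer, and the claim is about some nonzero dual Chern class, though actually we only need existence of \emph{some} $t\geq 2m-2$ with $\bar c_t\neq 0$, which is exactly $t=\kappa$).

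Next, assuming there is a complex $2$-regular map $f:\mathbb{C}P^m\longrightarrow \mathbb{C}^N$, I would apply Lemma~\ref{cor2} with $k=2$ and $t=\kappa$. This yields
\begin{eqnarray*}
N\;\geq\; \kappa+2\;\geq\; (2m-2)+2\;=\;2m,
\end{eqnarray*}
which is the desired conclusion. So the entire proof is a two-line deduction once the two cited lemmas are in hand.

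The substantive content is entirely packed into Lemma~\ref{1007}, whose proof already appears in the excerpt: it distinguishes the two cases $b\neq 0$ and $b=0$ in the expression $c_1(\xi_{\mathbb{C}P^m,2}^{\mathbb{C}})=au+bc_1$, and in both cases uses Proposition~\ref{schubert2} to identify $h(m)=2m-2$ as the height of $c_1$ in $H^*(G_2(\mathbb{C}^{m+1});\mathbb{Z})$. Thus no new obstacle arises at the level of the proposition itself; the only ``hard part'' has been discharged in Lemma~\ref{1007}, and the hypothesis $m\geq 4$ is used precisely to ensure that Lemma~\ref{1007} applies.
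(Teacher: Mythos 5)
Your proof is correct and is exactly the paper's argument: the paper derives Proposition~\ref{cpm2} directly from Lemma~\ref{cor2} and Lemma~\ref{1007}, applying the obstruction with $t=\kappa\geq 2m-2$ and $k=2$ to get $N\geq\kappa+2\geq 2m$. No differences worth noting.
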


\medskip
{ 
\noindent{\bf{Acknowledgement}}. The  author would like to express his deep gratitude to Prof. Jie Wu for his guidance and help.
}

\vspace{1.5cm}

Department of Mathematics, 

National University of Singapore

Email Address: sren@u.nus.edu

\end{document}